\newtheorem{theorem}{Theorem}[section]
\newtheorem{corollary}{Corollary}[theorem]
\newtheorem{lemma}[theorem]{Lemma}
\newtheorem{proposition}[theorem]{Proposition}
\newtheorem{definition}{Definition}[section]
\newtheorem{assumption}{Assumption}[section]
\theoremstyle{remark}
\newtheorem{remark}{Remark}[section]
\newcommand\Vector[1]{\mathbf{#1}}
\newcommand\vb{{\Vector{b}}}
\newcommand\ve{{\Vector{e}}}
\newcommand\vg{{\Vector{g}}}
\newcommand\vh{{\Vector{h}}}
\newcommand\vs{{\Vector{s}}}
\newcommand\vu{{\Vector{u}}}
\newcommand\vv{{\Vector{v}}}
\newcommand\vw{{\Vector{w}}}
\newcommand\vx{{\Vector{x}}}
\newcommand\vy{{\Vector{y}}}
\newcommand\vz{{\Vector{z}}}
\newcommand\MATRIX[1]{\mathbf{#1}}
\newcommand\mA{{\MATRIX{A}}}
\newcommand\mB{{\MATRIX{B}}}
\newcommand\mG{{\MATRIX{G}}}
\newcommand\mH{{\MATRIX{H}}}
\newcommand\mI{{\MATRIX{I}}}
\newcommand\mM{{\MATRIX{M}}}
\newcommand\mS{{\MATRIX{S}}}
\newcommand\mT{{\MATRIX{T}}}
\newcommand\mV{{\MATRIX{V}}}
\newcommand\mW{{\MATRIX{W}}}
\newcommand\sS{{\mathbb{S}}}
\newcommand\bigO{\mathcal{O}}
\newcommand\op{{\mathrm{op}}}
\DeclareMathOperator*{\argmax}{arg\,max}
\DeclareMathOperator*{\argmin}{arg\,min}
\newcommand{\red}[1]{{\textcolor{red}{#1}}}
\renewcommand{\epsilon}{\varepsilon}
\newcommand{\tikzmark}[1]{\tikz[overlay,remember picture] \node (#1) {};}
\newcommand*{\AddNote}[4]{%
    \begin{tikzpicture}[overlay, remember picture]
        \draw [decoration={brace,amplitude=0.3em},decorate,thick,black]
            ($(#3.east)!(#1.north)!($(#3.east)-(0,1)$)$) --  
            ($(#3.east)!(#2.south)!($(#3.east)-(0,1)$)$)
                node [align=left, text width=4cm, pos=0.5, anchor=west] {#4};
    \end{tikzpicture}
}
\definecolor{comment}{RGB}{2,128, 9}
\crefname{equation}{}{}
\newcommand{\myalert}[1]{{\vspace{2mm}\noindent\textbf{#1}}}
\newcommand{\reals}{\mathbb{R}}
\newcommand{\vDelta}{\bm{\Delta}}
\newcommand{\Reg}{\mathrm{Reg}}
\newcounter{boxcounter}
\begin{document}

\title{%
Improved Complexity for Smooth Nonconvex Optimization: A Two-Level Online Learning Approach with Quasi-Newton Methods\footnotetext{\llap{\textsuperscript{1}}The authors are listed in alphabetical order.}
}

\author{Ruichen Jiang\thanks{Department of Electrical and Computer Engineering, The University of Texas at Austin, Austin, TX, USA  \{rjiang@utexas.edu, mokhtari@austin.utexas.edu, fpatitucci@utexas.edu\}} \and Aryan Mokhtari$^*$ \and Francisco Patitucci$^*$}

\date{}

\maketitle

\begin{abstract}%
We study the problem of finding an \(\epsilon\)-first-order stationary point (FOSP) of a smooth function, given access only to gradient information. 
The best-known gradient query complexity for this task, assuming both the gradient and Hessian of the objective function are Lipschitz continuous, is \(\mathcal{O}(\epsilon^{-7/4})\).
In this work, we propose a method with a gradient complexity of \(\mathcal{O}(d^{1/4}\epsilon^{-13/8})\), where \(d\) is the problem dimension, leading to an improved complexity when \(d = \mathcal{O}(\epsilon^{-1/2})\).
To achieve this result, we design an optimization algorithm that, underneath, involves solving two online learning problems. Specifically, we first reformulate the task of finding a stationary point for a nonconvex problem as minimizing the regret in an online convex optimization problem, where the loss is determined by the gradient of the objective function. Then, we introduce a novel \textit{optimistic quasi-Newton  method} to solve this online learning problem, with the Hessian approximation update itself framed as an online learning problem in the space of matrices. 
Beyond improving the complexity bound for achieving an \(\epsilon\)-FOSP using a gradient oracle, our result provides the first guarantee suggesting that quasi-Newton methods can potentially outperform gradient descent-type methods in nonconvex settings.
\end{abstract}

\newpage

\section{Introduction}

In this paper, we address the problem of finding a near-stationary point of a smooth, nonconvex function \( f \). When the gradient of \( f \) is Lipschitz continuous, it is known that gradient descent can find an \(\varepsilon\)-first-order stationary point (FOSP)—where \(\|\nabla f(\mathbf{x})\|_2 \leq \varepsilon\)—in at most \(\mathcal{O}(\varepsilon^{-2})\) iterations. Furthermore, with only first-order information and a Lipschitz continuous gradient, this complexity is optimal and matches the established lower bound in~\cite{carmon2020lower}.

\looseness=-1
Interestingly, even with the same oracle, where only the function's gradient is available, additional assumptions can reduce the complexity for finding an \(\varepsilon\)-FOSP. Specifically, \cite{carmon2017convex} introduced an accelerated gradient descent variant that leverages negative curvature to reach an \(\varepsilon\)-FOSP in at most \(\mathcal{O}(\varepsilon^{-7/4}\log(1/\varepsilon))\) gradient queries, assuming that both the gradient and Hessian of the objective function are Lipschitz continuous.
They further showed that if the third derivative is also Lipschitz, the number of gradient queries can be reduced to \(\mathcal{O}(\varepsilon^{-5/3}\log(1/\varepsilon) )\). In concurrent work, \cite{agarwal2017finding} achieved a similar complexity under the assumption that both the gradient and Hessian are Lipschitz continuous. Notably, they introduced a variant of the cubic regularization Newton method~\cite{nesterov2006cubic} that requires access only to the gradient of the objective function {and Hessian-vector products}. Their method finds an \(\varepsilon\)-FOSP using \(\mathcal{{O}}(\varepsilon^{-7/4}\log(d/\epsilon))\) {Hessian-vector products}. %

In follow-up work, \cite{li2022restarted,li2023restarted} successfully removed the polylogarithmic factor from the complexity of the previous results by introducing a restarted variant of the accelerated gradient descent method and the heavy ball method. Specifically, they demonstrated that it is possible to achieve an \(\varepsilon\)-FOSP with \(\mathcal{O}(\varepsilon^{-7/4})\) gradient queries, assuming both the gradient and Hessian are Lipschitz continuous. Later, the authors in \cite{Marumo2024,Marumo2024a} further developed parameter-free methods by incorporating line search, thus removing the need for prior knowledge of problem parameters.

It is also worth mentioning that several studies have investigated the problem of finding a \textit{second-order stationary point}, which is a more difficult task~\cite{agarwal2017finding,carmon2018accelerated,Jin2018,allenzhu2018neon2,Xu2017,royer2018complexity,Royer2020}. Although these methods also yield an \(\varepsilon\)-FOSP as a byproduct, none achieves a gradient complexity better than \(\mathcal{O}(\varepsilon^{-7/4})\) to find an \(\varepsilon\)-FOSP.

\textbf{Contributions.} 
Our main contribution is breaking the existing \(\mathcal{O}(\varepsilon^{-7/4})\) complexity barrier using only gradient oracles, assuming Lipschitz continuity of both the gradient and Hessian. 
We achieve this goal by proposing an optimization
method that integrates a two-level online learning approach.
At the first level, inspired by \cite{cutkosky2023optimal}, we reformulate the task of finding a first-order stationary point for a nonconvex function as an online convex optimization problem, where the loss is defined by the gradient of the objective function. We introduce a novel \textit{optimistic quasi-Newton method} to address this online learning problem. Guided by our convergence analysis, the update of the Hessian approximation in this quasi-Newton method naturally leads to a second online learning problem, framed in the space of matrices with a quadratic loss. This approach enables our method to rely exclusively on gradient queries, eliminating the need for any second-order information, including Hessian-vector products.
We establish that our method  achieves a gradient complexity of $\bigO(d^{1/4}\epsilon^{-13/8})$ for finding an $\epsilon$-FOSP, which outperforms the best existing complexity of \(\mathcal{O}(\varepsilon^{-7/4})\) when the problem dimension satisfies $d=\mathcal{O}(\varepsilon^{-1/2})$. Moreover, we show that the total number of matrix-vector products required by our algorithm is bounded by $\tilde{\bigO}({d^{1/8}}{\epsilon^{-29/16}}+d^{3/8}\epsilon^{-27/16})$.     

\subsection{Additional Related Work} 

\myalert{Quasi-Newton methods in nonconvex settings.} Quasi-Newton methods, widely used for unconstrained minimization, include popular updates like DFP~\cite{davidon1959variable,fletcher1963rapidly}, BFGS~\cite{broyden1970convergence,fletcher1970new,goldfarb1970family,shanno1970conditioning}, and SR1~\cite{davidon1959variable,conn1991convergence,khalfan1993theoretical}. Despite their practical success, convergence properties for these methods have been established primarily for strongly convex or convex functions~\cite{powell1971convergence,broyden1973local,powell1976some,byrd1987global}. For decades, the global convergence of BFGS on nonconvex objectives remained an open question.~\cite{Nocedal1992Theory,fletcher1994overview}. 
While pathological examples show that  BFGS update may fail to converge~\cite{dai2002convergence,mascarenhas2004bfgs}, the authors in \cite{li2001modified,li2001global} established that, with regularization or a skipping mechanism, the iterates generated by BFGS satisfy $\lim \inf_{t \rightarrow \infty}\|\nabla f(\vx_t)\| = 0$. However, these results only show asymptotic convergence and no explicit convergence rate was given. To our knowledge, no theoretical results have yet demonstrated a provable advantage for quasi-Newton methods in the nonconvex setting. An additional contribution of our result is to provide the first guarantee that a quasi-Newton method can outperform gradient descent-based methods in finding a first-order stationary point of a nonconvex function.

\section{Preliminaries and Background} \label{sec:prelims}

Formally, we consider the unconstrained minimization problem:
\begin{equation}\label{eq:min}
  \min_{\vx\in \reals^d} f(\vx),
\end{equation}
where $f : \reals^d \rightarrow \reals$ is smooth but possibly nonconvex. We assume that $f(\vx)$ is bounded below with an optimal value $f^*$ and satisfies the following two assumptions. Unless otherwise specified, we use $\|\cdot\|$ to denote the $\ell_2$-norm for vectors and the operator norm of matrices. 

\begin{assumption}[Lipschitz gradient]\label{asm:L1} 
   $\|\nabla f(\vx) - \nabla f(\vy)\| \leq L_1 \|\vx-\vy\|$ for any $\vx, \vy \in \reals^d$.
\end{assumption}

\begin{assumption}[Lipschitz Hessian]\label{asm:L2}
    $\|\nabla^2 f(\vx) - \nabla^2 f(\vy)\| \leq L_2 \|\vx-\vy\|$ for any $\vx, \vy \in \reals^d$. 
\end{assumption}

\subsection{Online-to-nonconvex Conversion
}
To lay the groundwork for our algorithm, we first review the approach introduced by~\cite{cutkosky2023optimal}, which converts the problem of finding a stationary point of a function $f$ into solving an online learning problem. Specifically, consider the general update rule
$
\vx_{n}=\vx_{n-1} + \vDelta_n
$, 
where we assume that $\|\vDelta_n\| \leq D$. Rather than directly prescribing the update rule, the key idea in ~\cite{cutkosky2023optimal} is to let the convergence analysis guide our choice of $\vDelta_n$.

Note that given Assumption~\ref{asm:L1}, one can show that  $
     f(\vx_{n-1}) - f(\vx_n) \geq -\nabla f (\vx_{n-1})^\top \vDelta_n - \frac{L_1}{2} D^2$.
Hence, 
to maximize the function value decrease, $\vDelta_n = - D \frac{\nabla f (\vx_{n-1})}{\|\nabla f (\vx_{n-1})\|}$ is the best choice and recovers the update of (normalized) gradient descent. Following standard analysis and a suitable choice of $D$, this yields a complexity of $\bigO(1/\varepsilon^2)$.
The main observation in \cite{cutkosky2023optimal} 
is that we can derive a tighter lower bound on the function value decrease by either leveraging randomization or applying Assumption~\ref{asm:L2}. Specifically, if $\vg_n$ is the gradient at a random point along the segment between $\vx_{n-1}$ and $\vx_n$, the change in the function value, $f(\vx_n) - f(\vx_{n-1})$, is exactly equal to $\mathbb{E}[\vg_n^\top \vDelta_n]$. 
Alternatively, as in \cite[Section 6.1]{cutkosky2023optimal}, one can set $\vg_n$ as the gradient at the midpoint between $\vx_{n-1}$ and $\vx_n$ to remove randomness from the analysis, which we adopt in this paper. Specifically, define $\vw_n=\frac{1}{2}(\vx_{n-1} + \vx_n)$ and set $\vg_n=\nabla f(\vw_n)$. This modification introduces an error in approximating $f(\vx_n) - f(\vx_{n-1})$ by $\vg_n^\top \vDelta_n$, but we show in the next lemma that this error is negligible when $D$ is small under Assumption~\ref{asm:L2}. {The proof is in Appendix~\ref{appen:conversion_1_s}.}
\begin{lemma}\label{lem:conversion}
    Consider $\vx_{n}=\vx_{n-1} + \vDelta_n$ where  $\|\vDelta_n\| \leq D$. Further, define $\vg_n=\nabla f(\vw_n)$  where $\vw_n=\frac{1}{2}(\vx_{n-1}+\vx_{n})$. 
    If Assumption~\ref{asm:L2} holds, then  
    {$
           f(\vx_{n-1}) - f(\vx_n) \geq  -\vg_n^\top \vDelta_n - \frac{L_2D^3}{48}$}.
\end{lemma}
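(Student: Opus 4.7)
My approach is to perform a symmetric second-order Taylor expansion of $f$ around the midpoint $\vw_n$, applied to both endpoints $\vx_n = \vw_n + \vDelta_n/2$ and $\vx_{n-1} = \vw_n - \vDelta_n/2$. The midpoint is chosen precisely to exploit the anti-symmetry of the displacements $\pm\vDelta_n/2$, which is what converts $\vg_n^\top\vDelta_n$ from being a first-order approximation to something closer to a second-order (midpoint rule) approximation of $f(\vx_n)-f(\vx_{n-1})$.

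\textbf{Key steps.} First, invoke the standard consequence of Assumption~\ref{asm:L2}: for any $\vy, \vz$, one has $|f(\vy) - f(\vz) - \nabla f(\vz)^\top(\vy-\vz) - \tfrac{1}{2}(\vy-\vz)^\top \nabla^2 f(\vz)(\vy-\vz)| \leq \tfrac{L_2}{6}\|\vy-\vz\|^3$. I would apply this twice with $\vz = \vw_n$, once to $\vy = \vx_n$ and once to $\vy = \vx_{n-1}$; since $\|\vx_n - \vw_n\| = \|\vx_{n-1} - \vw_n\| = \|\vDelta_n\|/2 \le D/2$, each remainder is controlled by $L_2(D/2)^3/6 = L_2 D^3/48$. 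Second, subtract the two expansions: the linear terms combine into $\nabla f(\vw_n)^\top\bigl((\vx_n - \vw_n)-(\vx_{n-1}-\vw_n)\bigr) = \vg_n^\top \vDelta_n$, while the two quadratic terms are \emph{identical} because $(\pm \vDelta_n/2)^\top \nabla^2 f(\vw_n) (\pm\vDelta_n/2) = \tfrac{1}{4}\vDelta_n^\top\nabla^2 f(\vw_n)\vDelta_n$ is an even function of the displacement, and hence cancel exactly. This symmetry-induced cancellation of the Hessian contribution is the crucial feature of anchoring the expansion at the midpoint rather than at an endpoint.

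\textbf{Finishing.} Once the quadratic terms drop out, one is left with $f(\vx_n) - f(\vx_{n-1}) - \vg_n^\top\vDelta_n = R_n - R_{n-1}$, where $R_n, R_{n-1}$ are the two third-order remainders, each bounded by $L_2 D^3/48$ in absolute value. Bounding $R_n - R_{n-1}$ by the sum of magnitudes and rearranging gives the desired lower bound on $f(\vx_{n-1}) - f(\vx_n)$.

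\textbf{Main obstacle.} The mathematics is straightforward; the only real care needed is (a) recognizing that naively expanding around $\vx_{n-1}$ would not give a third-order error because $\vg_n \ne \nabla f(\vx_{n-1})$ in general, and (b) keeping track of the cubic constants carefully. A delicate point worth checking is whether the target constant $L_2 D^3/48$ can indeed be obtained from a pure triangle inequality on $R_n - R_{n-1}$, or whether one needs to combine the two remainders via an integral representation (e.g.\ writing $R_n - R_{n-1}$ as a single integral of $(1-t)\vh^\top[\nabla^2 f(\vw_n+t\vh)-\nabla^2 f(\vw_n-t\vh)]\vh\,dt$ with $\vh=\vDelta_n/2$) to extract the sharpest dependence on $t$ before applying the Hessian Lipschitz bound.
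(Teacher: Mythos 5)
Your approach is correct and genuinely different from the paper's. The paper expresses $f(\vx_n)-f(\vx_{n-1})=\bigl\langle \int_0^1 \nabla f(\vx_{n-1}+s\vDelta_n)\,ds,\,\vDelta_n\bigr\rangle$, inserts $\nabla^2 f(\vw_n)(s-\tfrac12)\vDelta_n$ inside the integral (legitimate because $\int_0^1 (s-\tfrac12)\,ds=0$), and bounds the deviation between the averaged gradient and $\vg_n$ using Assumption~\ref{asm:L2}. You instead do a symmetric second-order Taylor expansion of $f$ around $\vw_n$ applied to both endpoints $\vw_n\pm\vh$ with $\vh=\vDelta_n/2$, letting the quadratic (even) terms cancel exactly. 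Both are clean; yours perhaps makes the "midpoint rule" intuition more visible, while the paper's keeps everything at the level of gradients.

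Regarding the constant you flagged: your suspicion is right, and it is not a defect of your proof. The triangle inequality gives $|R_n-R_{n-1}|\le \tfrac{L_2D^3}{48}+\tfrac{L_2D^3}{48}=\tfrac{L_2D^3}{24}$, and the integral representation you propose gives the same thing — bound the integrand by $2t(1-t)L_2\|\vh\|^3$ and integrate to get $\tfrac{L_2}{3}\|\vh\|^3\le\tfrac{L_2D^3}{24}$. In fact $\tfrac{L_2D^3}{24}$ is sharp: take $f(x)=x^3$ on $\reals$ with $\vw_n=0$, $\vDelta_n=D$, so $L_2=6$, $\vg_n=0$, and $f(\vx_n)-f(\vx_{n-1})-\vg_n^\top\vDelta_n = D^3/4 = \tfrac{L_2D^3}{24}$, which exceeds the stated $\tfrac{L_2D^3}{48}=D^3/8$. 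The discrepancy traces to a small arithmetic slip in the paper's own computation: the integral $\int_0^1 \tfrac{L_2}{2}\|\vDelta_n\|^2(s-\tfrac12)^2\,ds$ equals $\tfrac{L_2\|\vDelta_n\|^2}{24}$ (since $\int_0^1 (s-\tfrac12)^2\,ds=\tfrac{1}{12}$), not $\tfrac{L_2D^2}{48}$. The factor of two has no bearing on any of the complexity results, so simply state the bound as $\tfrac{L_2D^3}{24}$ and move on — your argument, with that constant, is complete.
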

This selection of $\vg_n$  refines the approximation error from $\bigO(L_1 D^2)$ to $\bigO(L_2 D^3)$, a crucial improvement for achieving better complexity. 
By Lemma~\ref{lem:conversion}, the optimal choice for $\vDelta_n$ is \(-\vg_n\). However, computing \(\vg_n\) at \(\vw_n\) requires \(\vx_n\), which is unavailable when selecting \(\vDelta_n\).
This point suggests that choosing \(\vDelta_n\) to maximize function decrease can be viewed as an \emph{online learning problem}, where \(\vDelta_n\) is the action, and the loss is the linear function \(\vg_n^\top \vDelta_n\). This perspective implies that minimizing iterations to reach a stationary point relates to an online learning problem aimed at minimizing the cumulative loss \(\sum_{n} \vg_n^\top \vDelta_n\).
To formally connect the online learning formulation of selecting $\vDelta_n$ and the function decrease to finding a stationary point, we use  Lemma~\ref{lem:conversion} stating that for any arbitrary $\vu$, after $T$ updates, we have
$
 f(\vx_T)-f(\vx_{0}) \leq \sum_{n=1}^T \vg_n^\top (\vDelta_n-\vu) +\sum_{n=1}^T \vg_n^\top \vu+\frac{TL_2D^3}{48}.
$
Now if we set  the arbitrary vector as 
$
\vu=-D \nicefrac{(\sum_{n=1}^T \vg_n)}{\|\sum_{n=1}^T \vg_n\|}
$, it can be shown:
\begin{equation}\label{eq:bound_1}
   \bigg\|\frac{1}{T}\sum_{n=1}^T \vg_n\bigg\|\leq  \frac{f(\vx_{0})- f(\vx_T)}{DT} + \frac{1}{DT}\sum_{n=1}^T \vg_n^\top (\vDelta_n-\vu) +\frac{L_2D^2}{48}. 
\end{equation}
Although the above bound connects the norm of the average gradient to the regret term $\sum_{n=1}^T \vg_n^\top (\vDelta_n - \vu)$, we aim to have the norm of the gradient at an average point on the left-hand side to guarantee stationarity. This leads us to the following lemma, similar to \cite[Proposition 15]{cutkosky2023optimal}.
\begin{lemma}\label{lemma:averaged_gradient}
   Recall the definition of $\vg_n = \nabla f(\vw_n)$. 
   If Assumption~\ref{asm:L2} holds, then 
    $
         \|\nabla f(\bar{\vw})\| \leq  \|\frac{1}{T}\sum_{n=1}^T \vg_n\| + \frac{L_2}{2}T^2 D^2$,
    where $\bar{\vw} = \frac{1}{T}\sum_{n=1}^T \vw_n$. %
\end{lemma}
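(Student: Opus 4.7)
The plan is to control the difference $\nabla f(\bar{\vw}) - \tfrac{1}{T}\sum_{n=1}^T \vg_n$ by a second-order Taylor expansion around $\bar{\vw}$, exploiting the fact that $\sum_n (\vw_n - \bar{\vw}) = 0$ so that the first-order term cancels exactly, leaving only the Hessian-Lipschitz remainder.

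First I would apply the triangle inequality to write
\[
\|\nabla f(\bar{\vw})\| \leq \Big\|\tfrac{1}{T}\sum_{n=1}^T \vg_n\Big\| + \Big\|\nabla f(\bar{\vw}) - \tfrac{1}{T}\sum_{n=1}^T \nabla f(\vw_n)\Big\|,
\]
so it suffices to bound the second term on the right-hand side by $\tfrac{L_2}{2}T^2 D^2$. Then, using Assumption~\ref{asm:L2}, the integral form of the remainder in Taylor's theorem gives, for every $n$,
\[
\nabla f(\vw_n) = \nabla f(\bar{\vw}) + \nabla^2 f(\bar{\vw})(\vw_n - \bar{\vw}) + \vr_n, \qquad \|\vr_n\| \leq \tfrac{L_2}{2}\|\vw_n - \bar{\vw}\|^2.
\]
Averaging over $n$ and using the defining identity $\sum_{n=1}^T (\vw_n - \bar{\vw}) = \vzero$ makes the linear term disappear, yielding
\[
\tfrac{1}{T}\sum_{n=1}^T \nabla f(\vw_n) - \nabla f(\bar{\vw}) = \tfrac{1}{T}\sum_{n=1}^T \vr_n, \qquad \Big\|\tfrac{1}{T}\sum_{n=1}^T \vr_n\Big\| \leq \tfrac{L_2}{2T}\sum_{n=1}^T \|\vw_n - \bar{\vw}\|^2.
\]

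The remaining step is a uniform bound on $\|\vw_n - \bar{\vw}\|$. Since $\vw_k - \vw_{k-1} = \tfrac{1}{2}(\vDelta_{k-1} + \vDelta_k)$ and $\|\vDelta_k\| \leq D$, telescoping gives $\|\vw_n - \vw_m\| \leq |n-m| D$, and hence
\[
\|\vw_n - \bar{\vw}\| \leq \tfrac{1}{T}\sum_{m=1}^T \|\vw_n - \vw_m\| \leq \tfrac{D}{T}\sum_{m=1}^T |n-m| \leq T D.
\]
Plugging $\|\vw_n - \bar{\vw}\|^2 \leq T^2 D^2$ into the averaged remainder bound gives $\tfrac{L_2}{2} T^2 D^2$, which combined with the triangle inequality yields the claim.

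I expect no real obstacle here; the essential ingredients are the cancellation of the linear term by centering at the average $\bar{\vw}$ (which is exactly why a second-order expansion is useful rather than merely a gradient-Lipschitz bound), and the trivial telescoping estimate on the displacements. Some care is warranted in the remainder bound to ensure the correct constant $L_2/2$ (arising from $\int_0^1 L_2 t\, dt$ in the integral remainder), but the resulting inequality matches the statement.
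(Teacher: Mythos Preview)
Your proposal is correct and follows essentially the same approach as the paper: both insert the vanishing linear term $\nabla^2 f(\bar{\vw})\sum_n(\vw_n-\bar{\vw})$, apply the Hessian-Lipschitz bound to get $\tfrac{L_2}{2T}\sum_n\|\vw_n-\bar{\vw}\|^2$, and then bound $\|\vw_n-\bar{\vw}\|\le TD$ by telescoping the increments (the paper writes out $\vw_t-\vw_s$ directly rather than going through consecutive differences, but the computation is the same).
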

Indeed, combining the above result with the expression in \eqref{eq:bound_1} connects the norm of the gradient at the average iterate to the regret bound on the right-hand side. To generalize this framework, we consider a multi-episode online learning problem. In this setting, after every $T$ iterations—referred to as the episode length—the arbitrary vector $\vu$ changes, and we reset the gradient averaging. This leads to the following proposition for the case when we have $K$ episodes.
\begin{proposition}\label{pr:average_gradient}
Suppose that Assumption~\ref{asm:L2} holds and consider Algorithm~\ref{alg:conversion}. {Define $\bar{\vw}^k = \frac{1}{T} \sum_{n=(k-1)T+1}^{kT} \vw_n$ and $\vu^k=-D \nicefrac{\sum_{n=(k-1)T+1}^{kT} \vg_n}{\|\sum_{n=(k-1)T+1}^{kT} \vg_n\|}$.} Then we have:
\begin{equation*}
        \frac{1}{K}\sum_{k=1}^K \|\nabla f(\bar{\vw}^k)\| \leq \frac{f(\vx_0) - f^*}{DKT} +\frac{1}{DKT} \Reg_T(\vu^1,\dots,\vu^K)+ \frac{L_2}{48} D^2 + \frac{L_2}{2}T^2 D^2,
    \end{equation*}    
    where $
    \Reg_T(\vu^1,\dots,\vu^K) = \sum_{k=1}^K \sum_{n=(k-1)T+1}^{kT} \langle \vg_n, \vDelta_n - \vu^k\rangle$.
\end{proposition}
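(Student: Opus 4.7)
The plan is to run the two-lemma argument that preceded~\eqref{eq:bound_1} independently inside each of the $K$ episodes, then average the resulting bounds. The key structural observation is that although the regret comparator $\vu^k$ is reset every $T$ steps, the per-step function decrements still telescope across episode boundaries to give a single $f(\vx_0)-f^*$ term.

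First I would fix an episode $k\in\{1,\dots,K\}$ with index set $I_k:=\{(k-1)T+1,\dots,kT\}$ and apply Lemma~\ref{lem:conversion} at each $n\in I_k$, summing the inequalities. The left-hand side telescopes within the episode to $f(\vx_{(k-1)T})-f(\vx_{kT})$, producing
$$-\sum_{n\in I_k}\vg_n^\top \vDelta_n\le f(\vx_{(k-1)T})-f(\vx_{kT})+\frac{T L_2 D^3}{48}.$$
Then I would add $\sum_{n\in I_k}\vg_n^\top \vu^k$ to both sides to make the within-episode regret $\Reg_T^{(k)}:=\sum_{n\in I_k}\vg_n^\top(\vDelta_n-\vu^k)$ appear. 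The given $\vu^k$ is chosen to be the maximizer of $-\langle\cdot,\sum_{n\in I_k}\vg_n\rangle$ over the radius-$D$ ball, so $-\sum_{n\in I_k}\vg_n^\top \vu^k=D\,\|\sum_{n\in I_k}\vg_n\|$; dividing by $DT$ yields the per-episode analogue of~\eqref{eq:bound_1},
$$\left\|\frac{1}{T}\sum_{n\in I_k}\vg_n\right\|\le \frac{f(\vx_{(k-1)T})-f(\vx_{kT})}{DT}+\frac{\Reg_T^{(k)}}{DT}+\frac{L_2 D^2}{48}.$$

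Next I would apply Lemma~\ref{lemma:averaged_gradient} on each episode. Its proof depends only on Assumption~\ref{asm:L2} and on the fact that the iterates within a $T$-step window are at most $O(TD)$ apart, both of which hold for any such window produced by $\vx_n=\vx_{n-1}+\vDelta_n$ with $\|\vDelta_n\|\le D$; so it carries over verbatim to give $\|\nabla f(\bar{\vw}^k)\|\le \|\frac{1}{T}\sum_{n\in I_k}\vg_n\|+\frac{L_2}{2}T^2 D^2$. Chaining with the previous inequality bounds $\|\nabla f(\bar{\vw}^k)\|$ for each $k$.

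Finally I would average over $k=1,\dots,K$. The function-value differences telescope across episode boundaries into $f(\vx_0)-f(\vx_{KT})\le f(\vx_0)-f^*$ using boundedness from below; the within-episode regrets satisfy $\sum_{k=1}^K \Reg_T^{(k)}=\Reg_T(\vu^1,\dots,\vu^K)$ by definition; and the two $L_2$-dependent terms are $k$-independent and pass through the average unchanged. Combining these gives exactly the claimed inequality. I do not anticipate a genuine obstacle here since this is a routine multi-episode assembly; the only points requiring care are keeping the two distinct discretization errors ($\frac{L_2 D^2}{48}$ from the midpoint-gradient approximation of Lemma~\ref{lem:conversion} and $\frac{L_2}{2}T^2 D^2$ from the averaged-gradient step of Lemma~\ref{lemma:averaged_gradient}) cleanly separated, and verifying that the telescoping of $f$-values spans all $K$ episodes despite the comparator being reset at each episode boundary.
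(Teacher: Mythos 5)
Your proposal is correct and follows essentially the same route as the paper: derive the per-episode analogue of~\eqref{eq:bound_1} via Lemma~\ref{lem:conversion} and the specific choice of $\vu^k$, upgrade to the gradient at the episode-averaged iterate via Lemma~\ref{lemma:averaged_gradient}, then average over $k$, telescope the function-value increments across episode boundaries, and invoke $f(\vx_{KT}) \ge f^*$. The paper's writeup simply invokes~\eqref{eq:bound_1} directly on each episode rather than re-deriving it, but the underlying steps and decomposition of error terms are identical.
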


Given the above discussion, any standard online learning algorithm can be employed to minimize the regret term corresponding to the presented online linear optimization problem, thereby establishing a complexity bound for reaching a stationary point of the objective function $f$. Interestingly, using a standard first-order online learning method, such as the optimistic gradient method, results in an overall complexity of $\bigO(1/\varepsilon^{7/4})$, as shown in \cite{cutkosky2023optimal}. While this framework provides a simple algorithmic scheme to recover the complexity of $\bigO(1/\varepsilon^{7/4})$, it does not improve the best-known complexity bound. In the next section, we introduce a novel optimistic quasi-Newton method, which relies solely on first-order information and achieves a regret bound that results in a better complexity bound than $\bigO(1/\varepsilon^{7/4})$ when the dimension $d$ is sufficiently small.

\section{Proposed Algorithm}\label{sec:algorithm}

\looseness=-1
In this section, we introduce our proposed method and explain its key ideas. Building on the framework in \cite{cutkosky2023optimal}, we formulate the problem of finding a stationary point of the function \( f \) as an online learning problem. In Section~\ref{subsec:optimistic}, we present a novel optimistic quasi-Newton  method to solve it. Then in Section~\ref{subsec:hessian_approx_update}, we show that the update for the Hessian approximation in our quasi-Newton algorithm boils down to solving a second online learning problem, this time in the space of matrices.

\subsection{Learning Update Directions: Optimistic Quasi-Newton Algorithm}\label{subsec:optimistic}

According to Proposition~\ref{pr:average_gradient}, 
our primary goal shifts to minimizing the $K$-shifting regret associated with this online learning formulation:
\refstepcounter{boxcounter}
\begin{mdframed}[linewidth=1pt,roundcorner=5pt]\label{box:OL1}
    \textbf{Online Learning Problem 1 %
    } \\[5pt]
    For $n=1,\dots,KT$:
    \begin{itemize}
        \item The learner chooses $\vDelta_n \in \reals^d$ such that $\|\vDelta_n\| \leq D$; 
        \item $\vg_n = \nabla f (\frac{1}{2}(\vx_{n}+\vx_{n-1}))$ is computed, where $\vx_{n}=\vx_{n-1} + \vDelta_n$; 
        \item The learner observes the loss  $\ell_n(\vDelta_n) = \langle \vg_n, \vDelta_n \rangle$;
    \end{itemize}
    \textbf{Goal:} Minimize the regret given by $\Reg_T(\vu^1,\dots,\vu^K) = \sum_{k=1}^K \sum_{n=(k-1)T+1}^{kT} \langle \vg_n, \vDelta_n - \vu^k\rangle$.
\end{mdframed}

\looseness = -1
To develop our optimistic quasi-Newton (OQN) method for addressing {Online Learning Problem~\ref{box:OL1}}, we begin by briefly reviewing the core concepts of optimistic methods~\cite{rakhlin2013online,joulani2020modular}. In our online learning setup, at each iteration \(n\), we first select an action \(\vDelta_n\) and then observe the loss \(\ell_n(\vDelta_n) = \langle \mathbf{g}_n, \vDelta_n \rangle\), where \(\mathbf{g}_n\) represents the gradient at iteration \(n\). Ideally, we would like to update \(\vDelta_n\) using the current loss gradient \(\mathbf{g}_n\), but since \(\mathbf{g}_n\) is only available after selecting~\(\vDelta_n\), we cannot use it directly.
The optimistic method addresses this challenge by employing a prediction or hint~\(\mathbf{h}_n\) to estimate~\(\mathbf{g}_n\) based on the information available up to time \(n\). We refine this estimate by correcting it with the previous estimation error, resulting in the adjusted descent direction: $
\mathbf{d}_n = \mathbf{h}_n - (\mathbf{g}_{n-1} - \mathbf{h}_{n-1})$. 
This approach is founded on the optimistic assumption that the difference between the true gradient and the hint remains consistent across iterations—that is, \(\mathbf{g}_n - \mathbf{h}_n \approx \mathbf{g}_{n-1} - \mathbf{h}_{n-1}\).
The hint \(\mathbf{h}_n\) can be any function based on the information available up to time \(n\). Note that for {Online Learning Problem~\ref{box:OL1}},
the general update of optimistic method can be written as:
\begin{equation}\label{eq:optimistic_c3}
     \vDelta_{n} = \Pi_{\|\vDelta\|\leq D} \left(\vDelta_{n-1} - \eta \vh_{n} - \eta (\vg_{n-1} - \vh_{n-1})\right), \quad \forall n >1 ,
\end{equation}
while for $n=1$ it is simply $\vDelta_{1} = \Pi_{\|\vDelta\|\leq D} \left(\vDelta_{0} - \eta \vh_{1} \right)$. 
With access to only first-order oracle, a natural choice is to set \(\mathbf{h}_n = \mathbf{g}_{n-1}\) and this will lead to a complexity of $O(1/\varepsilon^{2})$. However, due to the structure of the problem, one can construct another hint $\mathbf{h}_n$ that better approximates $\mathbf{g}_n$. Recall that $\vg_n = \nabla f(\vw_n)$ and $\vw_n = \vx_{n-1}+\frac{1}{2}\vDelta_n$. Specifically, based on the assumption that $\vDelta_{n} \approx \vDelta_{n-1}$, one can define $\vz_{n-1} = \vx_{n-1}+\frac{1}{2}\vDelta_{n-1}$ and set $\vh_n = \nabla f(\vz_{n-1})$, which would lead to a complexity of $O(1/\varepsilon^{7/4})$. While this hint improves the complexity of $O(1/\varepsilon^{2})$, it fails to show any improvement over the best-known bound.
To overcome this issue, we propose a quasi-Newton optimistic method that, while utilizing only first-order information, manages to provide a hint  better than the choice of {\(\vh_n = \nabla f(\vz_{n-1})\)} used in the  optimistic gradient method. 

\begin{algorithm}[!t]\small
    \caption{Optimistic Quasi-Newton for Online-to-nonconvex Conversion}\label{alg:conversion}
    \begin{algorithmic}[1]
    \Require Initial point $\vx_0$, initial matrix $\mB_1$ s.t. $\|\mB_1\|_{\op} \!\leq\! L_1$, $K,T \in \mathbb{N}$, 
    radius $D$, subproblem accuracy $\delta$
    \item[\textbf{Initialize:}] $\vDelta_1 = -D \frac{\nabla f(\vx_0)}{\|\nabla f(\vx_0)\|}$, $\vh_1 = \nabla f(\vx_0)$
    \For{$n = 1$ to $K T$}
        \State Set $\vx_n = \vx_{n-1} + \vDelta_n$
        \State Set $\vw_n = \vx_{n-1} + \frac{1}{2}\vDelta_n$, $\vg_n = \nabla f(\vw_n)$, $\vz_n = \vx_n+\frac{1}{2}\vDelta_n$
        \State Set $\vDelta_{n+1} = \mathsf{TRSolver}(\mA_n, \vb_n, D, \delta)$, where $\mA_n = \frac{1}{2}\mB_n + \frac{1}{\eta} \mI$, $\vb_n =\nabla f(\vz_n) + \vg_n - \vh_{n}- \frac{1}{2}\mB_n \vDelta_n - \frac{1}{\eta}\vDelta_n$
        \State Set $\vh_{n+1} = \nabla f(\vz_n) + \frac{1}{2}\mB_n (\vDelta_{n+1} - \vDelta_{n})$ \label{line:TRcall}
        \State \Comment{We have $\vDelta_{n+1} \approx \Pi_{\|\vDelta\|\leq D} \left(\vDelta_n - \eta \vh_{n+1} - \eta (\vg_n - \vh_n)\right)$; See Section~\ref{subsec:optimistic} %
        }
        \State Set $\vy_n = {\vg_{n+1}-\nabla f(\vz_n)}$, $\vs_n =  \frac{1}{2}(\vDelta_{n+1} - \vDelta_{n})$, and $\ell_n(\mB) = \|\vy_n - \mB \vs_n\|^2$
        \State Update $\mB_{n+1}$ using Subroutine~\ref{alg:hessian_approx}\quad\Comment{See Section~\ref{subsec:hessian_approx_update}}
    \EndFor
    \State Set $\vw_t^k = \vw_{(k-1)T + t}$ for $k = 1,\dots,K$ and $t = 1, \dots, T$
    \State Set $\bar{\vw}^k = \frac{1}{T} \sum_{t=1}^T \vw_t^k$ for $k = 1,\dots,K$
    
    \State \Return $\hat{\vw} = \argmin_{\bar{\vw} \in \{\bar{\vw}^1, \ldots, \bar{\vw}^K\}} \|\nabla f(\bar{\vw})\|$
    \end{algorithmic}
    \end{algorithm}
For the sake of argument, assume we have access to the function's second-order information. In that case, a natural choice for the hint  \( \vh_n \) would be $
\vh_n = \nabla f(\vz_{n-1}) + \nabla^2 f(\vz_{n-1})(\vw_n - \vz_{n-1})$.
This expression offers a more accurate approximation of the gradient \( \vg_n \) compared to simply using 
the gradient \(\nabla f(\vz_{n-1})\)
as the hint, since it incorporates curvature information through the Hessian. Further, given the definition of $\vw_n$, we have $
\vw_n - \vz_{n-1} = \frac{\vDelta_n - \vDelta_{n-1}}{2}$. Substituting this into the hint vector, we obtain $
\vh_n = \nabla f(\vz_{n-1}) +\frac{1}{2}\nabla^2 f(\vz_{n-1})(\vDelta_n - \vDelta_{n-1})$.
While this hint is more accurate, it introduces two challenges: (i) The Hessian \( \nabla^2 f(\vz_{n-1}) \) is not available in our setting.
(ii) The hint depends on \( \vDelta_n \), making the update implicit, since \( \vDelta_n \) appears on both sides of the update equation.
To overcome the first issue, we replace the Hessian with an approximate matrix $\mB_n$ that depends only on gradient information, as is common in quasi-Newton methods. 
To address the second issue, we develop an efficient subroutine that allows us to perform the resulting implicit update effectively. This ensures that the dependency on \( \vw_n \) does not impede the computational efficiency of the algorithm.
To summarize, the hint function that we propose is:
\begin{equation}\label{hint}
\vh_{n+1} = {\nabla f(\vz_{n}) + \frac{1}{2}\mB_n (\vDelta_{n+1} - \vDelta_{n})} \quad \forall n \geq 1,    
\end{equation}
and for the initial step we set $ \vh_1 = \nabla f(\vx_0)$. We will later clarify how the matrix $\mB_n$ is selected.  Given this hint function, the update for our quasi-Newton optimistic method to pick the next action for {Online Learning Problem~\ref{box:OL1}} is given by:
\begin{equation}\label{eq:implicit_quasi_newton}
    {\vDelta_{n+1} \!=\! \Pi_{\|\vDelta\|\leq D} \Bigl[\vDelta_n - \eta \Bigl( \nabla f(\vz_{n}) + \frac{\mB_n}{2} (\vDelta_{n+1} \!-\! \vDelta_{n})  \Bigr)  - \eta \Bigl(\vg_n - \nabla f(\vz_{n-1}) - \frac{\mB_{n-1}}{2} (\vDelta_{n} \!-\! \vDelta_{n-1})\Bigr)\Bigr].}
\end{equation}
{Our proposed method is summarized in Algorithm~\ref{alg:conversion}.}
Now, there are two key questions we need to address: how to efficiently perform the update in \eqref{eq:implicit_quasi_newton}, given that it is an implicit update where \( \vDelta_{n+1} \) appears on both sides of the equation, and how to select the matrix \( \mB_n \). We subsequently address these questions in the following sections.

\subsection{Efficient Subroutine for OQN Update}\label{subsec:TRSolver}

As noted earlier, the update rule in \eqref{eq:implicit_quasi_newton} is implicit since the right-hand side also depends on \(\vDelta_{n+1}\). However, this update can still be efficiently executed by casting it as a solution to an inclusion problem, which closely resembles a trust-region problem. To highlight this connection, we first introduce a sequence of matrices and a sequence of vectors:
\begin{equation}\label{eq:def_An_bn}
    \mA_n = \frac{1}{2}\mB_n + \frac{1}{\eta} \mI \quad \text{and} \quad \vb_n = \nabla f(\vz_n)+\vg_n - \vh_n - \frac{1}{2}\mB_n \vDelta_n - \frac{1}{\eta}\vDelta_n.
\end{equation}
The following lemma shows that the update in \eqref{eq:implicit_quasi_newton} relates to an inclusion problem with $\mA_n$ and $\vb_n$.

\begin{lemma}\label{lem:inclusion}
Implementing the update in~\eqref{eq:implicit_quasi_newton} is equivalent to solving the following inclusion problem  $
    0 \in \mA_n\vDelta_{n+1} + \vb_n + \mathcal{N}_{\{\|\vDelta\| \leq D\}}({\vDelta_{n+1}})$,
where $\mathcal{N}_{\{\|\vDelta\| \leq D\}}(\vDelta_{n+1})$ denotes the normal cone to the set $\{\|\vDelta\| \leq D\}$ at the point $\vDelta_{n+1}$.  
\end{lemma}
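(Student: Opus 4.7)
The plan is to invoke the standard first-order characterization of the Euclidean projection onto a closed convex set $C$: for any $\vx$, one has $\vy = \Pi_C(\vx)$ if and only if $\vy \in C$ and $\vx - \vy \in \mathcal{N}_C(\vy)$. Here $C = \{\vDelta : \|\vDelta\|\leq D\}$, which is closed and convex, so this characterization applies directly. The rest will be algebraic repackaging to extract the definitions of $\mA_n$ and $\vb_n$ given in \eqref{eq:def_An_bn}.

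First I would abbreviate the argument inside the projection in \eqref{eq:implicit_quasi_newton} by setting
\[
\vp_n \;:=\; \nabla f(\vz_n) + \tfrac{1}{2}\mB_n(\vDelta_{n+1}-\vDelta_n) + \vg_n - \vh_n,
\]
so that the update reads $\vDelta_{n+1} = \Pi_{\{\|\vDelta\|\leq D\}}(\vDelta_n - \eta\,\vp_n)$. Applying the projection characterization yields $\vDelta_n - \eta\,\vp_n - \vDelta_{n+1} \in \mathcal{N}_{\{\|\vDelta\|\leq D\}}(\vDelta_{n+1})$. Since the normal cone is a cone, I can divide through by $\eta>0$ without changing the inclusion, which gives
\[
0 \;\in\; \vp_n + \tfrac{1}{\eta}(\vDelta_{n+1}-\vDelta_n) + \mathcal{N}_{\{\|\vDelta\|\leq D\}}(\vDelta_{n+1}).
\]

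Finally I would substitute the definition of $\vp_n$ back in and group the terms by whether they depend on the unknown $\vDelta_{n+1}$ or only on known quantities. The terms linear in $\vDelta_{n+1}$ are $\tfrac{1}{2}\mB_n\vDelta_{n+1} + \tfrac{1}{\eta}\vDelta_{n+1} = \mA_n \vDelta_{n+1}$, which matches the definition of $\mA_n$ in \eqref{eq:def_An_bn}. The remaining terms collapse to exactly
\[
\nabla f(\vz_n) + \vg_n - \vh_n - \tfrac{1}{2}\mB_n \vDelta_n - \tfrac{1}{\eta}\vDelta_n = \vb_n,
\]
and we obtain $0 \in \mA_n \vDelta_{n+1} + \vb_n + \mathcal{N}_{\{\|\vDelta\|\leq D\}}(\vDelta_{n+1})$, as claimed. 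Conversely, starting from this inclusion, the same manipulations run in reverse to recover \eqref{eq:implicit_quasi_newton}, establishing the equivalence.

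There is no real obstacle: the whole argument is just the projection-as-normal-cone lemma followed by bookkeeping. The only point worth emphasizing during write-up is that the implicit occurrence of $\vDelta_{n+1}$ inside the projection's argument (through the term $\tfrac{1}{2}\mB_n\vDelta_{n+1}$ coming from $\vp_n$) is precisely what combines with the $\tfrac{1}{\eta}\vDelta_{n+1}$ produced by the normal-cone identity to form the matrix $\mA_n$; this is the conceptual reason the implicit update in \eqref{eq:implicit_quasi_newton} can be realized as a trust-region-type subproblem solvable by \textsf{TRSolver}.
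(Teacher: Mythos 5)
Your proof is correct and follows essentially the same route as the paper's: both invoke the normal-cone characterization of Euclidean projection (with the fixed-point/implicit nature of \eqref{eq:implicit_quasi_newton} handled by treating $\vDelta_{n+1}$ as a known vector inside the projection argument), then divide by $\eta$ and regroup terms to match the definitions of $\mA_n$ and $\vb_n$ in \eqref{eq:def_An_bn}. If anything, your write-up is slightly more careful than the paper's by explicitly noting that dividing the inclusion by $\eta>0$ is legitimate because the normal cone is a cone.
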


With this connection, it becomes straightforward to relate \eqref{eq:implicit_quasi_newton} to the subproblem in trust-region methods. Specifically, note that the inclusion problem above corresponds to the first-order optimality condition for the following trust-region problem~\cite{conn2000trust}:
    \begin{equation}\label{eq:trust_region}
        \min_{\|\vDelta\| \leq D} \left\{ \frac{1}{2}\vDelta^\top \mA_n\vDelta  + \langle  \vb_n,  \vDelta \rangle \right\}.
    \end{equation}
    Thus, by finding a solution \(\vDelta_{n+1}\) that satisfies the first-order optimality condition for \eqref{eq:trust_region}, we also obtain a \(\vDelta_{n+1}\) that satisfies \eqref{eq:implicit_quasi_newton}.
Furthermore, as we will demonstrate, it suffices to solve the subproblem in \eqref{eq:trust_region} to a specified accuracy~\(\delta\). Specifically, we set  $
    \vDelta_{n+1} = \mathsf{TRSolver}(\mA_n, \vb_n, D, \delta)$,
where the \(\mathsf{TRSolver}\) oracle is defined as follows.

\begin{definition}\label{def:TRSolver}
    Given the inputs $\mA \in \mathbb{S}^d$, $\vb \in \reals^d$, $D>0$ and $\delta>0$, the $\mathsf{TRSolver}(\mA,\vb,D,\delta)$ oracle returns $\hat{\vDelta} \in \mathbb{R}^d$ such that $\|\hat{\vDelta}\| \leq D$ and there exists $\vv \in \mathcal{N}_{\{\|\vDelta\|\leq D\}}(\hat{\vDelta})$ with $\|\mA \hat{\vDelta} + \vb + \vv \| \leq \delta$.  
\end{definition}

In Lemma~\ref{lem:learning_regret}, we analyze how errors from solving the trust-region subproblem affect the update in \eqref{eq:implicit_quasi_newton} and the regret analysis. We then select an accuracy level to optimize overall complexity.

\subsection{Hessian approximation update via online learning} \label{subsec:hessian_approx_update}

In this section, we focus on the selection and update of the matrix \( \mB_n \) used in the hint function of our optimistic quasi-Newton method. To design its update, we first analyze the regret defined in Proposition~\ref{pr:average_gradient}, which the update in \eqref{eq:implicit_quasi_newton} aims to minimize. We then demonstrate that the best regret guarantee can be achieved if the Hessian approximation matrices  \( \mB_n \) follow an additional online learning update in the matrix space. In other words, we propose an online learning scheme for updating \( \mB_n \), motivated by the regret analysis of the optimistic method in \eqref{eq:implicit_quasi_newton} for solving the main online learning problem of finding a stationary point of \( f \).
Next, we characterize the regret obtained by performing the update of the optimistic quasi-Newton method proposed in \eqref{eq:implicit_quasi_newton}. 

\begin{lemma} \label{lem:learning_regret}
    Consider the optimistic quasi-Newton update in \eqref{eq:implicit_quasi_newton} for solving the online learning problem described above. If we define 
    $\vy_n:= {\vg_{n+1}-\nabla f(\vz_n)}$, $\vs_n := {\vw_{n+1} - \vz_n = \frac{1}{2}(\vDelta_{n+1} - \vDelta_{n})}$, and $\delta$ is the accuracy level for solving~\eqref{eq:trust_region}, then we have:
    $$
    \Reg_T(\vu^1,\dots,\vu^K) \leq \frac{4KD^2}{\eta} + \frac{3\eta}{2} \sum_{n=1}^{KT} \| \vy_n -  \mB_n \vs_n \|^2 + {2DKT \delta}.
    $$
\end{lemma}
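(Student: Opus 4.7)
The plan is to establish the regret bound through a standard optimistic-OGD analysis, adapted to account for (i) the implicit nature of the update (since $\vh_{n+1}$ depends on $\vDelta_{n+1}$) and (ii) the $\delta$-accuracy with which $\mathsf{TRSolver}$ returns $\vDelta_{n+1}$. The key observation that unlocks everything is that after expanding $\mA_n$ and $\vb_n$ via their definitions in~\eqref{eq:def_An_bn} and substituting the hint formula~\eqref{hint}, the inclusion residual simplifies cleanly to the optimistic-OGD form:
\begin{equation*}
\mA_n \vDelta_{n+1} + \vb_n \;=\; \tfrac{1}{\eta}(\vDelta_{n+1} - \vDelta_n) + \vh_{n+1} + (\vg_n - \vh_n).
\end{equation*}

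First, I would convert $\mathsf{TRSolver}$'s output into a variational inequality. By Lemma~\ref{lem:inclusion} and Definition~\ref{def:TRSolver}, there exists $\vv_n \in \mathcal{N}_{\{\|\vDelta\|\leq D\}}(\vDelta_{n+1})$ with $\|\mA_n \vDelta_{n+1} + \vb_n + \vv_n\| \leq \delta$. Taking the inner product with $\vDelta_{n+1} - \vu$ for any admissible $\vu$, applying Cauchy--Schwarz with $\|\vDelta_{n+1} - \vu\| \leq 2D$, and using the normal-cone inequality $\langle \vv_n, \vu - \vDelta_{n+1}\rangle \leq 0$ yields $\langle \mA_n \vDelta_{n+1} + \vb_n, \vDelta_{n+1} - \vu\rangle \leq 2D\delta$. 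Substituting the identity above and applying the three-point identity to $\langle \vDelta_{n+1} - \vDelta_n, \vDelta_{n+1} - \vu\rangle$ produces the per-round inequality
\begin{equation*}
  \langle \vh_{n+1} + (\vg_n - \vh_n),\, \vDelta_{n+1} - \vu\rangle \;\leq\; \tfrac{1}{2\eta}\bigl(\|\vDelta_n - \vu\|^2 - \|\vDelta_{n+1} - \vu\|^2 - \|\vDelta_{n+1} - \vDelta_n\|^2\bigr) + 2D\delta.
\end{equation*}
Writing $\vep_m := \vg_m - \vh_m$ (so that $\vep_{n+1} = \vy_n - \mB_n\vs_n$ by construction), I use $\vh_{n+1} + (\vg_n - \vh_n) = \vg_{n+1} + (\vep_n - \vep_{n+1})$ and reindex $m = n+1$ to convert the LHS into a bound on $\langle \vg_m, \vDelta_m - \vu\rangle$ at the price of an extra term $\langle \vep_m - \vep_{m-1}, \vDelta_m - \vu\rangle$.

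Next, I sum within each episode $k$ over $m \in \{(k-1)T+1,\ldots,kT\}$ with comparator $\vu^k$. The distance terms telescope to $\tfrac{1}{2\eta}(\|\vDelta_{(k-1)T} - \vu^k\|^2 - \|\vDelta_{kT} - \vu^k\|^2)$ and the $\delta$ contributions sum to $2D\delta T$. For the hint-error telescope I would use the discrete Abel decomposition
\begin{equation*}
  \langle \vep_m - \vep_{m-1}, \vDelta_m - \vu^k\rangle = \langle \vep_m, \vDelta_m - \vu^k\rangle - \langle \vep_{m-1}, \vDelta_{m-1} - \vu^k\rangle - \langle \vep_{m-1}, \vDelta_m - \vDelta_{m-1}\rangle,
\end{equation*}
so the first two pieces telescope into just two boundary terms, and the third piece is bounded by Young's inequality $\tfrac{\eta}{2}\|\vep_{m-1}\|^2 + \tfrac{1}{2\eta}\|\vDelta_m - \vDelta_{m-1}\|^2$. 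The latter term is chosen to exactly cancel the negative $-\tfrac{1}{2\eta}\|\vDelta_m - \vDelta_{m-1}\|^2$ from the three-point identity. The two remaining boundary inner products are split via Young's as $\langle \vep, \vDelta - \vu^k\rangle \leq \tfrac{\eta}{2}\|\vep\|^2 + \tfrac{1}{2\eta}\|\vDelta - \vu^k\|^2$; combined with the telescoping distances this gives $\tfrac{1}{\eta}\|\vDelta_{(k-1)T} - \vu^k\|^2 \leq 4D^2/\eta$ per episode.

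The main obstacle is the final bookkeeping in aggregating the $\|\vep_m\|^2$ coefficients. Within a single episode each $\vep_m$ receives $\tfrac{\eta}{2}$ from the bulk Young's step and two boundary terms pick up $\tfrac{\eta}{2}\|\vep_{(k-1)T}\|^2$ and $\tfrac{\eta}{2}\|\vep_{kT}\|^2$; after summing across $K$ episodes, interior junction indices $n = kT$ with $1 \leq k \leq K-1$ are counted three times (once in each adjacent episode's bulk and once as a shared boundary), yielding an overall coefficient of at most $\tfrac{3\eta}{2}$ on $\sum_n \|\vep_{n+1}\|^2 = \sum_n \|\vy_n - \mB_n\vs_n\|^2$. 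Careful index alignment at $n=1$ (where $\vh_1 = \nabla f(\vx_0)$ is defined separately) together with this three-way counting yields the stated constants $\tfrac{4KD^2}{\eta}$, $\tfrac{3\eta}{2}$, and $2DKT\delta$.
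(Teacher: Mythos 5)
Your proof follows essentially the same route as the paper's (Appendix Lemmas B.3 and B.4): convert the $\mathsf{TRSolver}$ output into a variational inequality, rewrite the residual via $\mA_n\vDelta_{n+1} + \vb_n = \tfrac{1}{\eta}(\vDelta_{n+1} - \vDelta_n) + \vh_{n+1} + (\vg_n - \vh_n)$, apply the three-point identity to the distance term, and telescope the hint-error term within each episode. Your Abel decomposition of $\langle \vep_m - \vep_{m-1}, \vDelta_m - \vu\rangle$ is algebraically identical to the paper's rearrangement $-\langle\vg_n - \vh_n, \vDelta_{n+1} - \vu\rangle = -\langle\vg_n - \vh_n, \vDelta_n - \vu\rangle + \langle\vg_n - \vh_n, \vDelta_n - \vDelta_{n+1}\rangle$ after reindexing. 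The one genuine difference is your Young's weighting on the cross term: you take $\tfrac{\eta}{2}\|\vep_{m-1}\|^2 + \tfrac{1}{2\eta}\|\vDelta_m - \vDelta_{m-1}\|^2$, which fully cancels the $-\tfrac{1}{2\eta}\|\vDelta_m - \vDelta_{m-1}\|^2$ from the three-point identity and leaves a bulk coefficient of $\tfrac{\eta}{2}$ per index. The paper's Lemma B.3 instead takes $\eta\|\vg_n - \vh_n\|^2 + \tfrac{1}{4\eta}\|\vDelta_{n+1}-\vDelta_n\|^2$, keeping a residual $-\tfrac{1}{4\eta}\|\cdot\|^2$ that Lemma B.4 then simply drops, so it carries a bulk coefficient of $\eta$. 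Your choice is in fact what makes the stated $\tfrac{3\eta}{2}$ come out exactly: at a junction index $n = jT$ the three copies of $\tfrac{\eta}{2}$ are one bulk term (episode $j{+}1$'s range starts at $jT$) and two boundary Young's terms (the upper boundary of episode $j$ and the lower boundary of episode $j{+}1$) --- your narration slightly mis-attributes this as two bulks and one boundary, but the total count of three is right. By contrast, tracing the paper's own episode-regret bound $\sum_{n=(k-1)T}^{kT}\eta\|\vg_n - \vh_n\|^2 + \tfrac{\eta}{2}\|\vg_{(k-1)T}-\vh_{(k-1)T}\|^2$ over $k$, the ranges overlap at multiples of $T$ and the junction coefficient works out to $\tfrac{5\eta}{2}$, not $\tfrac{3\eta}{2}$; the paper's final combining step in the proof of this lemma appears to have a small arithmetic slip (harmless for the overall rate, but it means your derivation is the one that actually delivers the lemma as stated).
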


This lemma shows how the regret in  OQN depends on the choice of matrices \( \mB_n \). To achieve the smallest regret bound, we need to minimize \(\sum_{n=1}^{KT} \| \vy_n - \mB_n \vs_n \|^2 \). The challenge is that both \( \vy_n \) and \( \vs_n \) depend on \( \mB_n \): we select \( \mB_n \), compute \( \vDelta_{n+1} \), then determine \( \vy_n \) and \( \vs_n \). 
Hence,  minimizing the cumulative sum associated with the choice of \( \mB_n \) can itself be formulated as an online learning problem. 
Given that Assumption~\ref{asm:L1} implies \( -L_1 \mI \preceq \nabla^2 f(x) \preceq L_1 \mI \), it is reasonable to select matrices from the set \( \mathcal{Z} \triangleq \{\mB \in \sS^d: \|\mB\|_{\op} \leq L_1\} \). { Specifically, this constraint not only aligns with Assumption~\ref{asm:L1} but also allows us to characterize the computational cost of \(\mathsf{TRSolver}\) and to bound the dynamic regret for our Hessian approximation online learning problem, as explained in the following remarks.}

\begin{remark}
    If we remove the constraint and consider an unconstrained online learning problem, two key issues arise. First, as discussed in Section~\ref{subsec:TRSolver}, our algorithm involves solving a trust-region subproblem~\eqref{eq:trust_region} every iteration, which depends on $\mathbf{B}_n$. As we shall establish in Lemma~\ref{lem:TRSolver}, the number of matrix-vector products required by $\mathsf{TRSolver}$  scales with $\sqrt{\|\mB_n\|_{\op}}$. Therefore, without a bound on $\mB_n$, the total computational cost of our algorithm cannot be controlled. Second, in the absence of constraints, it becomes more challenging to bound the dynamic regret in terms of the path length, as done in Lemma~\ref{lem:Dyn_regret_bound}. However, this issue may be addressed using more advanced techniques, as proposed in recent works~\cite{jacobsen2022parameter,zhang2022pde,luo2022corralling, jacobsen2023unconstrained,jacobsen2024an}.
\end{remark}
\begin{remark}
Instead of constraining the operator norm, one could alternatively impose a constraint on the Frobenius norm, selecting the matrices from the set $\{\mathbf{\mB} \in \sS^d: \|\mB\|_{F} \leq L_1\sqrt{d}\}$. 
In this case, we can establish a regret bound similar to Lemma~\ref{lem:Dyn_regret_bound}. However, this approach does not resolve the computational cost issue. To bound the cost of $\mathsf{TRSolver}$ using Lemma~\ref{lem:TRSolver}, we would need to bound the operator norm by $ \|\mB_n\|_{\op} \leq \|\mB_n\|_{F} \leq L_1\sqrt{d}$,  
    which introduces a worse dependence on the problem’s dimension~$d$.
\end{remark}

\refstepcounter{boxcounter}
\begin{mdframed}[linewidth=1pt]
    \textbf{Online Learning Problem 2 %
    } \\[5pt]
    For $n=1,\dots,KT$: 
    \begin{itemize}
        \item The learner chooses $\mB_n \in \mathcal{Z}$ 
        \item It observes the quadratic loss function \( \ell_n(\mB_n)=\| \vy_n -  \mB_n \vs_n \|^2 \)
    \end{itemize} 
    \textbf{Goal:} Minimize the cumulative loss given by $\sum_{n=1}^{KT} \|\vy_n - \mB_n\vs_n\|^2$.
\end{mdframed}
To update \(\mB_n\), one could simply apply the projected online gradient descent  (POGD) update with stepsize $\gamma$, which is given by $
\mB_{n+1} = \Pi_{\mathcal{Z}}\left( \mB_n - \gamma \nabla \ell_n(\mB_n) \right)$.
However, this approach requires a costly projection onto the set \(\mathcal{Z}\), involving a full eigenvalue decomposition with a cost of \(\mathcal{O}(d^3)\). To avoid this, we adopt the projection-free online learning framework proposed in {\cite{mhammedi2022efficient} and later developed in {\cite{jiang2023online,jiang2023accelerated}}}, which bypasses the need for full projection. {Instead, it builds on an approximate separation oracle for the feasible set $\mathcal{Z} = \{\mB\in \sS^d: \|\mB\|_{\op} \leq L_1\}$ defined below. As we later discuss in Section~\ref{subsec:computational},} it only requires calculating the largest and smallest eigenvalues, maintaining a cost of \(\mathcal{O}(d^2)\). Notably, the cumulative loss for \(\ell_n(\mB_n)\) in this projection-free method matches the cumulative loss achieved by POGD.

\begin{subroutine}[!t]\small
\caption{Online Learning Guided Hessian Approximation Update}\label{alg:hessian_approx}
  \begin{algorithmic}[1]
      \State \textbf{Input:} Initial matrix $\mB_1\in \mathbb{S}^d$ s.t. $\|\mB_1\|_{\op} \leq L_1$, step size $\rho>0$ %
      \State \textbf{Initialize:} set $\mW_1 \leftarrow \mB_1$
      and $\tilde{\mG}_1 \leftarrow \nabla \ell_1(\mB_1)$
      \State {Update $\mW_{2} \leftarrow \frac{\sqrt{d}L_1}{\max\{\sqrt{d}L_1,\|\mW_0 - \rho \tilde{\mG}_0\|_F\}}(\mW_0 - \rho \tilde{\mG}_0)$}
      \For{$n=1,\dots,M-1$}
      \State Query the oracle $(\gamma_n,\mS_n) \leftarrow \mathsf{SEP}(\mW_n)$ \label{line:SEPcall}
      \If{$\gamma_n \leq 1$}
        \State Set ${\mB}_n \leftarrow \mW_n$
        and $\tilde{\mG}_n \leftarrow \nabla \ell_n(\mB_n)$
      \Else
        \State Set ${\mB}_n \leftarrow \mW_n/\gamma_n$
        and $\tilde{\mG}_n \leftarrow \nabla \ell_n(\mB_n)+\max\{0,-\langle \nabla \ell_n(\mB_n), \mB_n \rangle\} \mS_n$
      \EndIf
      \State \label{line:projection}%
        Update 
        $\mW_{n+1} \leftarrow \frac{\sqrt{d}L_1}{\max\{\sqrt{d} L_1,\|\mW_n - \rho \tilde{\mG}_n\|_F\}}(\mW_n - \rho \tilde{\mG}_n)$ \quad\Comment{Euclidean projection onto $\mathcal{B}_{\sqrt{d}L_1}(0)$} 
      \EndFor
  \end{algorithmic}
\end{subroutine}
\begin{definition}\label{def:extevec}
    The $\mathsf{SEP}(\mW)$ oracle takes $\mW \in \mathbb{S}^d$ as input and returns a scalar $\gamma>0$ and a matrix $\mS\in \mathbb{S}^d$ with one of the following possible outcomes:
    \begin{itemize}
      \item \textbf{Case I:} $\gamma \leq 1$, which implies that $\|\mW\|_{\op} \leq 2L_1$.
      \item \textbf{Case II:} $\gamma>1$, which implies that $\|\mW/\gamma\|_{\op} \leq 2L_1$, $\|\mS\|_F \leq 1/L_1$ and $\langle \mS,\mW-{\mB}\rangle \geq \gamma -1$ for any ${\mB} \in \sS^d$ such that $\|{\mB}\|_{\op} \leq L_1$.
    \end{itemize}
  \end{definition}

  In words, %
  there are two possible outcomes for a given input~$\mW\in \sS^d$: we either certify that $\mW \in 2 \mathcal{Z}$, or we find a scaling factor $\gamma >1$ such that the scaled matrix $\mW/\gamma\in 2 \mathcal{Z}$ and a separating hyperplane given by $\mS$ between $\mW$ and the set $\mathcal{Z}$.

Equipped with the $\mathsf{SEP}$ oracle, we are ready to present our projection-free online learning algorithm for $\{\mB_n\}_{n\geq 0}$, which is given in Subroutine~\ref{alg:hessian_approx}. The core idea is to introduce an auxiliary online learning problem %
over a larger feasible set, defined as $\mathcal{B}_{\sqrt{d}L_1}(0) = \{\mW \in \sS^d: \|\mW\|_F \leq \sqrt{d}L_1\}$, where projections are easy to compute. This auxiliary problem employs carefully designed surrogate loss functions $\tilde{\ell}_n(\mW) = \langle \tilde{\mG}_n,\mW\rangle$, where $\tilde{\mG}_n \in \sS^d$ will be defined later.
Instead of directly tackling the original online learning problem, we apply projected online gradient descent on the surrogate loss function $\tilde{\ell}_n(\mW)$ to update the auxiliary iterates $\{\mW_n\}_{n\geq 0}$ (see Line~\ref{line:projection} in Subroutine~\ref{alg:hessian_approx}). The sequence $\{\mB_n\}_{n\geq 0}$ is then generated from $\{\mW_n\}_{n \geq 0}$ using the $\mathsf{SEP}$ oracle. Specifically, let $\gamma_n$ and $\mS_n$ denote the output of $\mathsf{SEP}(\mW_n)$. 
If $\gamma_n \leq 1$, this certifies that $\|\mW_n\|_{\op} \leq 2L_1$ and we set $\mB_n \leftarrow \mW_n$ and $\tilde{\mG}_n \leftarrow \nabla \ell_n(\mB_n)$. Otherwise, if $\gamma_n > 1$, we rescale~$\mW_n$ to obtain $\mB_n \leftarrow \mW_n/\gamma_n$ and update $\tilde{\mG}_n \leftarrow \nabla \ell_n(\mB_n)+\max\{0,-\langle \nabla \ell_n(\mB_n), \mB_n \rangle\} \mS_n$. By Definition~\ref{def:extevec}, this ensures that $\|\mB_n\|_{\op}\leq 2L_1$. To demystify our choice of $\tilde{\mG}_n$, note that the surrogate loss function is designed such that the immediate regret of the auxiliary online learning problem serves an upper bound for the original problem, i.e., we have ${\ell}_{n}(\mB_n) - \ell_n(\mB) \leq \tilde{\ell}_n(\mW) - \tilde{\ell}_n(\mB)$ for any $\mB \in \mathcal{Z}$. Thus, this allows us to apply the standard regret analysis of projected OGD to bound the regret of the auxiliary problem, which in turn implies a regret bound for the original problem. 
\begin{remark}
 Unlike the standard online learning setting, we do not strictly require $\mB \in \mathcal{Z}$, but we ensure $\|\mB\|_{\op} \leq 2L_1$, equivalent to $\mB \in 2\mathcal{Z}$. This relaxation suffices for our analysis.
\end{remark}
\section{Complexity Analysis}

\looseness=-1
To characterize the overall complexity of the proposed method, we begin by establishing the regret associated with the updates in our proposed optimistic method. Building on Lemma \ref{lem:learning_regret} and Proposition \ref{pr:average_gradient}, we study the \textit{gradient complexity} of our method. Next, we analyze the computational cost of the $\mathsf{TRSolver}$ oracle used in Algorithm~\ref{alg:conversion}, as well as the computational cost of the projection-free online learning scheme for updating the Hessian approximation in Subroutine~\ref{alg:hessian_approx}. These analyses together allow us to characterize the total \textit{computational cost} of our proposed method.

\subsection{Convergence Rate}

In this section, we present the final convergence rate of our algorithm by selecting appropriate values for the hyperparameters. Notably, we have four free parameters, \(\eta\), \(D\), \(K\), and \(T\), that can be chosen subject to the constraint \(KT = M\). We also characterize the accuracy level \(\delta\) for solving the subproblem to ensure that the overall complexity remains unaffected by this inexactness.

\begin{theorem}\label{thm:convergence_rate}
Suppose Assumptions \ref{asm:L1} and \ref{asm:L2} hold. If we run  Algorithm~\ref{alg:conversion} with parameters  $ D = \Theta ( \bigl( \frac{f(\vx_0) - f^*}{d^{{2}/{5}} L_1^{{2}/{5}} L_2^{{3}/{5}} M} \bigr)^{\frac{5}{13}} ),  \eta = \Theta ( ( \frac{1} {dL_1L_2^{{2}/{3}}D^{{2}/{3}}} )^{\frac{3}{5}} ) , T =  \Theta ( {(DL_2\eta)^{-\frac{1}{3}}} )  $, and {$\delta = \frac{D}{\eta T}$}, then we have:
    \begin{equation}   
            \frac{1}{K}\sum_{k=1}^K \|\nabla f(\bar{\vw}^k)\| = \bigO \left(
            (f(\vx_0) - f^*)^{\frac{8}{13}} L_1^{\frac{2}{13}} L_2^{\frac{3}{13}} \frac{d^{\frac{2}{13}}}{M^{\frac{8}{13}}} \right). 
    \end{equation}
\end{theorem}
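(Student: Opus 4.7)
The plan is to chain three ingredients already available in the paper: (i) the online-to-nonconvex reduction of Proposition~\ref{pr:average_gradient}; (ii) the regret bound for the optimistic quasi-Newton update in Lemma~\ref{lem:learning_regret}; and (iii) a dynamic-regret bound for the projection-free online learning in Subroutine~\ref{alg:hessian_approx} (referred to in the excerpt as Lemma~\ref{lem:Dyn_regret_bound}). Substituting (ii) and (iii) into (i) produces a master inequality with five competing error terms, which can be simultaneously balanced by tuning $D$, $\eta$, $T$ (with $K=M/T$), and by choosing the trust-region tolerance $\delta$ so that it is absorbed by the rest.

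\textbf{Step 1 (master bound).} Feeding Lemma~\ref{lem:learning_regret} into Proposition~\ref{pr:average_gradient} and dividing the regret through by $DKT$ gives
\begin{equation*}
\frac{1}{K}\sum_{k=1}^K\|\nabla f(\bar{\vw}^k)\|\;\leq\;\frac{f(\vx_0)-f^*}{DKT}+\frac{4D}{\eta T}+\frac{3\eta}{2DKT}\sum_{n=1}^{KT}\|\vy_n-\mB_n\vs_n\|^2+2\delta+\frac{L_2 D^2}{48}+\frac{L_2 T^2 D^2}{2}.
\end{equation*}
Choosing $\delta=D/(\eta T)$ aligns the $2\delta$ term with $4D/(\eta T)$, so $\delta$ drops out of the asymptotics and we are left with five active terms.

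\textbf{Step 2 (bounding the quasi-Newton residual).} I compare $\mB_n$ against the natural benchmark $\mH_n:=\nabla^2 f(\vz_n)\in\mathcal{Z}$. Assumption~\ref{asm:L2} and second-order Taylor expansion, combined with $\vs_n=\tfrac{1}{2}(\vDelta_{n+1}-\vDelta_n)$ and $\|\vDelta_n\|\leq D$, give $\|\vy_n-\mH_n\vs_n\|\leq\tfrac{L_2}{2}\|\vs_n\|^2\leq\tfrac{L_2 D^2}{2}$, so the benchmark's cumulative loss is at most $O(L_2^2 D^4 M)$. For the competitor path, $\|\mH_n-\mH_{n-1}\|_F\leq \sqrt{d}\,L_2\|\vz_n-\vz_{n-1}\|\leq 2\sqrt{d}\,L_2 D$, giving $P_M=O(\sqrt{d}\,L_2 D\cdot M)$. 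Invoking Lemma~\ref{lem:Dyn_regret_bound} (dynamic regret of projection-free OGD on the Frobenius ball of radius $\sqrt{d}\,L_1$, with $\ell_n$-gradients bounded by the quantities produced in Subroutine~\ref{alg:hessian_approx}) then bounds $\sum_n\|\vy_n-\mB_n\vs_n\|^2$ by the sum of a bias contribution $O(L_2^2 D^4 M)$ and a path-length contribution that carries the single source of dimension dependence, scaling as $\sqrt{P_M}\cdot(\text{gradient bound})\cdot \sqrt{M}$.

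\textbf{Step 3 (parameter tuning).} Plugging back into the master inequality leaves, schematically, the terms $\Delta_0/(DM)$, $D/(\eta T)$, $\eta L_2^2 D^3$, an $\eta$-weighted dynamic-regret term carrying a $d^{1/2}$ factor, and $L_2 T^2 D^2$, where $\Delta_0=f(\vx_0)-f^*$. Matching $L_2 T^2 D^2\asymp D/(\eta T)$ forces $T\asymp(DL_2\eta)^{-1/3}$; equating the $\eta$-terms with $D/(\eta T)$ and then with $\Delta_0/(DM)$ produces two further balance equations whose unique solution is the $(D,\eta,T)$ stated in the theorem. Substituting these choices back into any of the balanced terms gives the advertised $\bigO\!\bigl(\Delta_0^{8/13}L_1^{2/13}L_2^{3/13}d^{2/13}/M^{8/13}\bigr)$ rate. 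The main obstacle is precisely this bookkeeping in Step 3: the lone $\sqrt{d}$ coming from the $\|\cdot\|_F\leq\sqrt{d}\|\cdot\|_{\op}$ conversion on the Hessian path must propagate through three coupled balance equations in $(D,\eta,T)$ to land as exactly $d^{2/13}$ in the rate (equivalently $d^{1/4}$ in the gradient complexity), which is what forces the unusual fractional exponents with denominator $13$ in the parameter choices.
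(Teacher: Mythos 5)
Your Step 1 is exactly the paper's master inequality, and your choice of benchmark $\mH_n=\nabla^2 f(\vz_n)$ with the bounds $\|\vy_n-\mH_n\vs_n\|\le \tfrac{L_2}{2}D^2$ and $\|\mH_{n+1}-\mH_n\|_F\le 2\sqrt{d}\,L_2 D$ is also exactly the paper's Lemma~\ref{lem:H_n_loss}. So the skeleton of the argument is right.

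The gap is in the dimension accounting from Step~2 onwards, and it is not cosmetic. You summarize Lemma~\ref{lem:Dyn_regret_bound} as contributing a path-length term ``scaling as $\sqrt{P_M}\cdot(\text{gradient bound})\cdot\sqrt{M}$'' and then claim in Step~3 that a \emph{lone} $\sqrt{d}$ propagates through the balance equations to land as $d^{2/13}$. Neither is what actually happens. The paper's Lemma~\ref{lem:Dyn_regret_bound} uses a \emph{fixed} step size $\rho=\tfrac{1}{16D^2}$ together with the self-bounding property $\|\nabla\ell_n(\mB)\|_*\le 2D\sqrt{\ell_n(\mB)}$ (Lemma~\ref{lem:grad_loss}), which lets half of $\sum_n\ell_n(\mB_n)$ be absorbed to the left; the resulting dynamic-regret bound is
\[
\sum_{n=1}^{KT}\ell_n(\mB_n)\ \le\ 16D^2\|\mW_1-\mH_1\|_F^2\ +\ 2\sum_{n=1}^{KT}\ell_n(\mH_n)\ +\ 64L_1D^2\sqrt{d}\sum_{n=1}^{KT}\|\mH_{n+1}-\mH_n\|_F,
\]
which is \emph{linear} in the path length, not the square-root form you would get from optimizing $\rho$. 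More importantly, there are \emph{two} independent $\sqrt{d}$ factors here: one from the diameter of the auxiliary Frobenius ball (the $\sqrt{d}$ prefactor multiplying the path length, and also $\|\mW_1-\mH_1\|_F^2\le 4dL_1^2$), and a second from $\|\mH_{n+1}-\mH_n\|_F\le\sqrt{d}\,L_2\|\vz_{n+1}-\vz_n\|$. Multiplying them out gives the term $128\,dL_1L_2D^3\,KT$ inside $\sum_n\ell_n(\mB_n)$, i.e.\ a full $d$ factor, and hence the $\eta$-weighted term in the master bound is $192\,\eta\,dL_1L_2D^2$. If you instead run your three balance equations with the $\sqrt{d}$ you claim, the solution gives $D\propto d^{-1/13}$ and a rate $\propto d^{1/13}$, which contradicts both the theorem's stated parameter choices and its $d^{2/13}$ rate. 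Carry the full $d$ through and you recover $D=\Theta\bigl((\Delta_0/(d^{2/5}L_1^{2/5}L_2^{3/5}M))^{5/13}\bigr)$ and $d^{2/13}$ exactly. (You also dropped the $16D^2\|\mW_1-\mH_1\|_F^2\lesssim dL_1^2D^2$ term, which produces the lower-order $\eta dL_1^2D/M$ contribution; it does not affect the leading rate but should be carried for completeness.)
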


This result implies that after \(M\) iterations, \(\min_{1 \leq k \leq K} \|\nabla f(\bar{\vw}^k)\|\) is at most \(\mathcal{O}(\nicefrac{d^{\frac{2}{13}}}{M^{\frac{8}{13}}})\). Thus, to ensure we find an \(\varepsilon\)-FOSP, we require at most \(M = \mathcal{O}(d^{1/4}/\varepsilon^{13/8})\) iterations, giving our method a gradient complexity of \(\mathcal{O}(d^{1/4}/\varepsilon^{13/8})\). As discussed, the best-known complexity bound for achieving an \(\varepsilon\)-FOSP with a first-order oracle under Assumptions \ref{asm:L1} and \ref{asm:L2} is \(\mathcal{O}(1/\varepsilon^{7/4})\) gradient queries. The above result shows that our method improves this complexity when \(d = \mathcal{O}(1/\sqrt{\varepsilon})\).

\begin{proof}[Proof Sketch]
Based on Proposition \ref{pr:average_gradient}, achieving the final convergence rate requires bounding the \(K\)-shifting regret. By Lemma \ref{lem:learning_regret}, this entails controlling the loss \(\ell_n (\mB_n) = \|\vy_n - \mB_n \vs_n\|^2\). To address this, we use a dynamic regret analysis, where our algorithm sequentially selects actions while competing against an adversary with an alternative action sequence. The dynamic regret is formally defined as: $D\text{-}\Reg(\mH_1, \dots, \mH_{KT}) = \sum_{n=1}^{KT} \ell_n(\mB_n) - \ell_n(\mH_n)$, 
 and our goal now is to minimize $D\text{-}\Reg(\mH_1, \dots, \mH_{KT})$. We demonstrate that our projection-free online learning method for updating the Hessian approximation yields the following result.

\begin{lemma}\label{lem:Dyn_regret_bound}
    Let $\rho = \frac{1}{16D^2}$ in Algorithm~\ref{alg:hessian_approx}. Then we have:
    \begin{equation*}
        \sum_{n=1}^{KT} \ell_n(\mB_n) \leq 16D^2 \|\mW_{1} - \mH_1\|_F^2 +  2 \sum_{n=1}^{KT} \ell_n(\mH_n)   + 64 L_1 D^2 \sqrt{d} \sum_{n=1}^{KT}  \| \mH_{n+1}  - \mH_{n} \|_F.
    \end{equation*}
\end{lemma}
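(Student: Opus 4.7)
The plan is to combine three ingredients and then exploit a self-bounded estimate of the surrogate gradient to absorb a quadratic term on the right-hand side; the factor $2$ in front of $\sum_n \ell_n(\mH_n)$ is produced by this absorption step. The ingredients are (i) a surrogate-regret inequality $\ell_n(\mB_n)-\ell_n(\mH_n)\le \langle \tilde{\mG}_n,\mW_n-\mH_n\rangle$ that transfers the per-round regret from the true iterate $\mB_n$ to a linear regret on the auxiliary iterate $\mW_n$; (ii) the standard dynamic-regret bound for projected online gradient descent on the Frobenius ball $\mathcal{B}_{\sqrt{d}L_1}(0)$ with surrogate linear losses $\tilde{\ell}_n(\mW)=\langle \tilde{\mG}_n,\mW\rangle$; and (iii) a self-bounded estimate of the form $\|\tilde{\mG}_n\|_F^2\le c_0 D^2 \ell_n(\mB_n)$ with the constant $c_0$ tuned to the step size $\rho=1/(16D^2)$.

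For ingredient (i), I would proceed by case analysis on the $\mathsf{SEP}$ output. When $\gamma_n\le 1$ we have $\mB_n=\mW_n$ and $\tilde{\mG}_n=\nabla \ell_n(\mB_n)$, so the inequality is just convexity of $\ell_n$. When $\gamma_n>1$, using $\mW_n=\gamma_n\mB_n$ and $c_n=\max\{0,-\langle \nabla \ell_n(\mB_n),\mB_n\rangle\}$, I would split
\[
\langle \tilde{\mG}_n,\mW_n-\mH_n\rangle \;=\; \langle \nabla \ell_n(\mB_n),\mB_n-\mH_n\rangle \,+\, (\gamma_n-1)\langle \nabla \ell_n(\mB_n),\mB_n\rangle \,+\, c_n\langle \mS_n,\mW_n-\mH_n\rangle.
\]
Convexity handles the first summand. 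If $\langle \nabla \ell_n(\mB_n),\mB_n\rangle\ge 0$, then $c_n=0$ and the second summand is already non-negative; otherwise the $\mathsf{SEP}$ separation property $\langle \mS_n,\mW_n-\mH_n\rangle\ge \gamma_n-1$ (valid for every $\mH_n\in\mathcal{Z}$ by Definition~\ref{def:extevec}) is precisely what is needed to make the sum of the last two summands non-negative.

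For ingredient (ii), I would telescope the one-step projected OGD inequality
\[
\langle \tilde{\mG}_n,\mW_n-\mH_n\rangle \;\le\; \tfrac{1}{2\rho}\bigl(\|\mW_n-\mH_n\|_F^2-\|\mW_{n+1}-\mH_n\|_F^2\bigr) + \tfrac{\rho}{2}\|\tilde{\mG}_n\|_F^2
\]
and use the Frobenius-ball bounds $\|\mW_n\|_F,\|\mH_n\|_F\le \sqrt{d}L_1$ to estimate each comparator-shift residual by $\|\mW_n-\mH_n\|_F^2-\|\mW_n-\mH_{n-1}\|_F^2\le 4\sqrt{d}L_1\|\mH_n-\mH_{n-1}\|_F$, producing
\[
\sum_n \langle \tilde{\mG}_n,\mW_n-\mH_n\rangle \;\le\; \tfrac{1}{2\rho}\|\mW_1-\mH_1\|_F^2 + \tfrac{2\sqrt{d}L_1}{\rho}\sum_n\|\mH_{n+1}-\mH_n\|_F + \tfrac{\rho}{2}\sum_n\|\tilde{\mG}_n\|_F^2.
\]
For ingredient (iii), the symmetric rank-one form $\nabla \ell_n(\mB_n)=-(\vy_n-\mB_n\vs_n)\vs_n^\top - \vs_n(\vy_n-\mB_n\vs_n)^\top$ together with $\|\vs_n\|\le D$ yields $\|\nabla \ell_n(\mB_n)\|_F\le 2D\sqrt{\ell_n(\mB_n)}$; Cauchy-Schwarz and $\|\mB_n\|_{\op}\le 2L_1$ (guaranteed by Definition~\ref{def:extevec}) give $c_n\le 2\|\mB_n\vs_n\|\sqrt{\ell_n(\mB_n)}\le 4L_1 D\sqrt{\ell_n(\mB_n)}$, and combining with $\|\mS_n\|_F\le 1/L_1$ via the triangle inequality produces $\|\tilde{\mG}_n\|_F^2\le c_0 D^2 \ell_n(\mB_n)$.

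Plugging (i)-(iii) together delivers
\[
\sum_n \bigl(\ell_n(\mB_n)-\ell_n(\mH_n)\bigr) \;\le\; \tfrac{1}{2\rho}\|\mW_1-\mH_1\|_F^2 + \tfrac{2\sqrt{d}L_1}{\rho}\sum_n\|\mH_{n+1}-\mH_n\|_F + \tfrac{c_0\rho D^2}{2}\sum_n \ell_n(\mB_n),
\]
and the step size $\rho=1/(16D^2)$ is chosen precisely so that the last coefficient equals $1/2$; moving it to the left-hand side and multiplying through by $2$ yields exactly the advertised constants $16D^2$ and $64L_1 D^2\sqrt{d}$, together with the factor $2$ in front of $\sum_n \ell_n(\mH_n)$. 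The main obstacle is ingredient (iii): one must bound the interaction between $\nabla \ell_n(\mB_n)$ and the corrective term $c_n\mS_n$ in Case~II sharply enough that $c_0$ is compatible with $\rho=1/(16D^2)$. This delicate interplay, which arises only because the Hessian-approximation subroutine uses the projection-free $\mathsf{SEP}$-based construction, is what ultimately pins down the specific numerical step size in the lemma.
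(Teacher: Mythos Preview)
Your three ingredients match the paper's proof almost line for line: (i) is the content of Lemma~\ref{lem:surrogate_regret}, (ii) is the projected-OGD one-step bound plus the same path-length telescoping (the paper also uses $\|\mW_{n+1}-\mH_{n+1}\|_F^2-\|\mW_{n+1}-\mH_n\|_F^2\le 4\sqrt{d}L_1\|\mH_{n+1}-\mH_n\|_F$), and (iii) is the self-bounding step corresponding to Lemma~\ref{lem:grad_loss}. The only genuine issue is the constant you extract in (iii).

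Your explicit chain gives $\|\nabla\ell_n(\mB_n)\|_F\le 2D\sqrt{\ell_n(\mB_n)}$ and $c_n\|\mS_n\|_F\le 4L_1D\sqrt{\ell_n(\mB_n)}\cdot\tfrac{1}{L_1}=4D\sqrt{\ell_n(\mB_n)}$, so the triangle inequality yields only $\|\tilde{\mG}_n\|_F\le 6D\sqrt{\ell_n(\mB_n)}$, i.e.\ $c_0=36$. With $\rho=1/(16D^2)$ the absorption coefficient is then $\tfrac{\rho}{2}\,c_0 D^2=9/8>1$, and the $\sum_n\ell_n(\mB_n)$ term cannot be moved to the left-hand side; the stated constants do not follow. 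The paper sidesteps this by routing the bound through the \emph{nuclear} norm of $\mG_n=\nabla\ell_n(\mB_n)$: it records $\|\tilde{\mG}_n\|_F\le 2\|\mG_n\|_*$ in \eqref{eq:surrogate_gradient_bound} and then proves $\|\mG_n\|_*\le 2D\sqrt{\ell_n(\mB_n)}$ in Lemma~\ref{lem:grad_loss} (note the lemma is stated in nuclear norm, not Frobenius). This delivers $\|\tilde{\mG}_n\|_F^2\le 16D^2\ell_n(\mB_n)$, i.e.\ $c_0=16$, which is exactly what $\rho=1/(16D^2)$ requires for the coefficient to equal $1/2$. So the missing idea in your sketch is to unify both summands of $\tilde{\mG}_n$ under $\|\mG_n\|_*$ (via $\|\mG_n\|_F\le\|\mG_n\|_*$ and $|\langle\mG_n,\mB_n\rangle|\le\|\mG_n\|_*\|\mB_n\|_{\op}$) rather than estimating them separately in Frobenius norm.
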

Next, we establish upper bounds for the terms on the right-hand side of the above lemma. First, note that \(\|\mW_{1} - \mH_1\|_F^2 \leq 4dL_1^2\). To bound \(\sum_{n=1}^{KT} \ell_n(\mH_n)\) and the path length \(\sum_{n=1}^{KT} \| \mH_{n+1} - \mH_{n} \|_F\), we select an appropriate competitor sequence \(\{\mH_i\}_{i=1}^n\) by setting \(\mH_n = \nabla^2 f(\vz_{n})\). Using this choice, we apply the following result to bound the last two terms in Lemma~\ref{lem:Dyn_regret_bound}.
\begin{lemma}\label{lem:H_n_loss}
   Recall that $\vz_n = \vx_n+ \frac{1}{2}\vDelta_n$. If we set $\mH_n = \nabla^2 f(\vz_{n})$, then we have $\ell_n( \mH_n ) \leq \frac{L_2^2}{4}D^4$ and $\| \mH_{n+1}  - \mH_n \|_F \leq 2 L_2 \sqrt{d} D$. 
\end{lemma}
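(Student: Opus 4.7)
The plan is to prove the two bounds separately, both following essentially from Assumption~\ref{asm:L2} (Lipschitz Hessian) together with the triangle inequality and the constraint $\|\vDelta_n\| \leq D$.

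For the first claim, the goal is to recognize $\ell_n(\mH_n)$ as the squared error of a first-order Taylor expansion of $\nabla f$ at $\vz_n$. Concretely, I would first observe that
\[
\vy_n - \mH_n \vs_n = \nabla f(\vw_{n+1}) - \nabla f(\vz_n) - \nabla^2 f(\vz_n)(\vw_{n+1} - \vz_n),
\]
since $\vg_{n+1} = \nabla f(\vw_{n+1})$ and $\vs_n = \vw_{n+1} - \vz_n$ by definition. The standard consequence of Assumption~\ref{asm:L2} (obtained by integrating $\nabla^2 f$ along the segment from $\vz_n$ to $\vw_{n+1}$) then gives
\[
\|\nabla f(\vw_{n+1}) - \nabla f(\vz_n) - \nabla^2 f(\vz_n)(\vw_{n+1} - \vz_n)\| \leq \tfrac{L_2}{2}\|\vw_{n+1}-\vz_n\|^2.
\]
It remains to bound $\|\vw_{n+1} - \vz_n\| = \tfrac{1}{2}\|\vDelta_{n+1}-\vDelta_n\| \leq D$ using the triangle inequality and $\|\vDelta_n\|, \|\vDelta_{n+1}\| \leq D$. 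Squaring yields $\ell_n(\mH_n) \leq (L_2/2)^2 D^4 = L_2^2 D^4/4$.

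For the second claim, I would use Assumption~\ref{asm:L2} in operator norm together with the elementary inequality $\|\mM\|_F \leq \sqrt{d}\,\|\mM\|_{\op}$ valid for any $d\times d$ matrix. This gives
\[
\|\mH_{n+1}-\mH_n\|_F = \|\nabla^2 f(\vz_{n+1}) - \nabla^2 f(\vz_n)\|_F \leq \sqrt{d}\,L_2\,\|\vz_{n+1} - \vz_n\|.
\]
Then I would unpack $\vz_{n+1} - \vz_n = \vx_{n+1} + \tfrac{1}{2}\vDelta_{n+1} - \vx_n - \tfrac{1}{2}\vDelta_n = \tfrac{3}{2}\vDelta_{n+1} - \tfrac{1}{2}\vDelta_n$, using $\vx_{n+1} = \vx_n + \vDelta_{n+1}$, and bound its norm by $\tfrac{3}{2}D + \tfrac{1}{2}D = 2D$, giving $\|\mH_{n+1}-\mH_n\|_F \leq 2L_2\sqrt{d}\,D$.

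Neither step presents a real obstacle: both are direct consequences of Assumption~\ref{asm:L2}, the definitions of $\vw_n, \vz_n, \vy_n, \vs_n$, and the a priori bound $\|\vDelta_n\| \leq D$ imposed throughout the algorithm. The only detail worth being careful about is keeping the constants correct when passing from $\|\vDelta_n\| \leq D$ to bounds on $\|\vw_{n+1}-\vz_n\|$ and $\|\vz_{n+1}-\vz_n\|$, since these involve different linear combinations of consecutive $\vDelta_n$'s.
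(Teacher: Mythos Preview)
Your proposal is correct and follows essentially the same approach as the paper's proof: both parts use Assumption~\ref{asm:L2} together with the explicit computations $\vw_{n+1}-\vz_n = \tfrac{1}{2}(\vDelta_{n+1}-\vDelta_n)$ and $\vz_{n+1}-\vz_n = \tfrac{3}{2}\vDelta_{n+1}-\tfrac{1}{2}\vDelta_n$, followed by the triangle inequality with $\|\vDelta_n\|\leq D$.
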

Combining Lemmas \ref{lem:Dyn_regret_bound} and \ref{lem:H_n_loss} implies 
$\sum_{n =1}^{KT} \ell_n(\mB_n) \leq \frac{L_2^2D^4 KT}{2} + 64dL_1^2D^2 + 128dL_1L_2D^3KT$.
Then, given Lemma \ref{lem:learning_regret}, if we set $\delta = \frac{D}{\eta T}$, then we obtain:
\begin{align*}
    \Reg_T(\vu^1,\dots,\vu^K)  \leq \frac{6KD^2}{\eta} + \frac{3\eta}{2} \left(  \frac{KTL_2^2D^4}{2} + 64dL_1^2D^2 + 128dL_1L_2D^3KT  \right),
\end{align*} 
and the finally by leveraging Proposition~\ref{pr:average_gradient}, we obtain:
\begin{equation*}
        \frac{1}{K}\sum_{k=1}^K \|\nabla f(\bar{\vw}^k)\| \leq \frac{f(\vx_0) - f^*}{DM}
        +\frac{6D}{T\eta} + \frac{3\eta}{2} \left[  \frac{L_2^2 D^3}{2} + \frac{64 dL_1^2D}{M} + 128 d L_1 L_2 D^2  \right]
        + \frac{L_2 D^2}{48} + \frac{L_2D^2T^2 }{2}.
\end{equation*}
Finally, by optimizing the free parameters \(\eta\), \(D\), \(K\), and \(T\) as suggested in the theorem's statement, the main claim follows.
\end{proof}

\subsection{Characterizing the Computational Cost}
\label{subsec:computational}

\myalert{Implementation of the $\mathsf{TRSolver}$ oracle.}
We first discuss the implementation of  $\mathsf{TRSolver}$ used in our optimistic quasi-Newton algorithm (see Algorithm~\ref{alg:conversion}).  Recall the trust-region problem: 
\begin{equation}\label{eq:trust_region_generic}
    \min_{\|\vDelta\| \leq D} \left\{ \frac{1}{2}\vDelta^\top \mA\vDelta  + \langle  \vb,  \vDelta \rangle \right\}.
\end{equation}
We aim to find $\hat{\vDelta}$ such that $\|\mA \hat{\vDelta} + \vb + \vv\| \leq \delta$ for some $\vv$ in the normal cone $\mathcal{N}_{\{\|\vDelta\| \leq D\}}(\hat{\vDelta})$. 
Inspired by \cite{honguyen2017second,wang2017linear}, we reformulate \eqref{eq:trust_region_generic} as a convex minimization problem. First, we approximately compute the minimum eigenvalue of $\mA$, which we denote by $\lambda_{\min}(\mA)$. If $\lambda_{\min}(\mA) \geq 0$, then the problem in \eqref{eq:trust_region_generic} is already convex, and there exist fast algorithms to find an approximate first-order stationary point. Specifically, we will apply  $\mathsf{FISTA\mathrm{+}SFG}$ proposed in \cite{lee2021geometric,kim2023time}, and it is shown that we can find $\hat{\vDelta}$ satisfying the condition after $\bigO\left(\sqrt{\nicefrac{\lambda_{\max}(\mA) D}{\delta}}\right)$ iterations (see Appendix~\ref{appen:gradient_norm}). Otherwise, if $\lambda_{\min}(\mA) <0$, we can instead consider a regularized problem:
\begin{equation}\label{eq:trust_region_regularized}
    \min_{\|\vDelta\| \leq D} \left\{ \frac{1}{2}\vDelta^\top (\mA - \lambda_{\min}(\mA) \mI)\vDelta  + \langle  \vb,  \vDelta \rangle \right\},
\end{equation}
which is convex. Hence, we can apply $\mathsf{FISTA\mathrm{+}SFG}$ to obtain an approximate first-order stationary point $\tilde{\vDelta}$ of \eqref{eq:trust_region_regularized}. Moreover, if 
$\vv_{\min}$ is an (approximate) eigenvector corresponding to the minimum eigenvalue of $\mA$, then we can construct the solution $\hat{\vDelta}$ to \eqref{eq:trust_region_generic} from a linear combination of $\tilde{\vDelta}$ and~$\vv_{\min}$. 
We defer the details to Appendix~\ref{appen:trsolver} and present the following lemma. 
\begin{lemma}\label{lem:TRSolver}
    Given inputs $\mA \in \mathbb{S}^{d}$, $\vb \in \reals^d$, $D >0$ and $\delta>0$, 
    let $B$ be an upper bound on $\max\{\lambda_{\max}(\mA) -\lambda_{\min}(\mA), \lambda_{\max}(\mA)\}$.
    Then,  $\mathsf{TRSolver}(\mA, \vb, D, \delta)$ requires  $\tilde{\bigO}\Bigl(\sqrt{\frac{BD}{\delta}}\Bigr)$ matrix-vector products with success probability at least $1\!-\!q$, where we hide logarithmic terms of $d$, $\!B\!$, $\!D$\!, $\!q$, and $\delta$.  
\end{lemma}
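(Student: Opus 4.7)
The plan is to assemble the three ingredients outlined in the paragraph preceding the lemma: an approximate minimum-eigenvalue/eigenvector routine for $\mA$, an accelerated first-order solver for the convex trust-region subproblem, and a boundary-lifting step that handles the nonconvex case. Throughout, I would set each subroutine's tolerance to an appropriate fraction of $\delta$, so that every call runs in $\tilde{\bigO}(\sqrt{BD/\delta})$ matrix-vector products with $\mA$ and the resulting errors accumulate additively to at most $\delta$.

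First, I would invoke a randomized Lanczos method on $\mA$ (and on the shift $B\mI-\mA$) to obtain, with probability at least $1-q$, an estimate $\tilde{\lambda}_{\min}$ of $\lambda_{\min}(\mA)$ and a unit vector $\vv_{\min}$ with $\vv_{\min}^\top \mA \vv_{\min}\le \lambda_{\min}(\mA)+\varepsilon_1$. The classical Lanczos analysis delivers this in $\tilde{\bigO}(\sqrt{B/\varepsilon_1})$ matrix-vector products, with polylogarithmic factors of the form $\log(d/q)$; choosing $\varepsilon_1=\Theta(\delta/D)$ keeps the cost within the stated budget. If the returned $\tilde{\lambda}_{\min}$ is nonnegative (case I), the subproblem is essentially convex with smoothness at most $\lambda_{\max}(\mA)\le B$, and one run of $\mathsf{FISTA\mathrm{+}SFG}$ on~\eqref{eq:trust_region_generic} returns, by the guarantee of \cite{lee2021geometric,kim2023time} quoted in Appendix~\ref{appen:gradient_norm}, a point $\hat{\vDelta}$ and a normal-cone element $\vv$ with $\|\mA\hat{\vDelta}+\vb+\vv\|\le \delta$ in $\bigO(\sqrt{\lambda_{\max}(\mA)D/\delta})=\bigO(\sqrt{BD/\delta})$ matrix-vector products, meeting Definition~\ref{def:TRSolver} directly.

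In the nonconvex case ($\tilde{\lambda}_{\min}<0$), I apply $\mathsf{FISTA\mathrm{+}SFG}$ to the regularized problem~\eqref{eq:trust_region_regularized}, whose Hessian $\mA-\tilde{\lambda}_{\min}\mI$ is positive semidefinite with spectral radius at most $\lambda_{\max}(\mA)-\tilde{\lambda}_{\min}\le B$; this costs $\bigO(\sqrt{BD/\delta})$ matrix-vector products and returns $\tilde{\vDelta}$ together with a normal-cone element $\tilde{\vv}$ so that $(\mA-\tilde{\lambda}_{\min}\mI)\tilde{\vDelta}+\vb+\tilde{\vv}$ has norm of order $\delta$. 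I then construct $\hat{\vDelta}$ by lifting $\tilde{\vDelta}$ along $\pm \vv_{\min}$ to the boundary $\|\hat{\vDelta}\|=D$, choosing the sign so that the original objective does not increase, following the classical construction of \cite{honguyen2017second,wang2017linear}. Since the boundary is active, any nonnegative multiple of $\hat{\vDelta}$ lies in the normal cone; picking the multiplier $-\tilde{\lambda}_{\min}>0$ and using the approximate eigenvector relation $\mA\vv_{\min}\approx \tilde{\lambda}_{\min}\vv_{\min}$ allows the leading $-\tilde{\lambda}_{\min}\tilde{\vDelta}$ term in $\mA\hat{\vDelta}+\vb$ to cancel, leaving a residual bounded by the subproblem error plus $|\tau|\,\varepsilon_1$, both of order $\delta$.

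The main obstacle is precisely this bookkeeping in the nonconvex case: verifying that the lift along the \emph{inexact} eigenvector, combined with the \emph{inexact} solution of the regularized problem and the chosen normal-cone multiplier, yields a final residual below $\delta$ without inflating the matrix-vector budget. Balancing $\varepsilon_1=\Theta(\delta/D)$ for Lanczos and a subproblem tolerance of order $\delta$ for $\mathsf{FISTA\mathrm{+}SFG}$ keeps both subroutine calls at $\tilde{\bigO}(\sqrt{BD/\delta})$, while the polylogarithmic factors in $d$, $B$, $D$, $q$, and $\delta$ arise solely from the high-probability Lanczos analysis and are absorbed into the $\tilde{\bigO}$ notation.
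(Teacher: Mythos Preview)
Your overall strategy matches the paper's: approximate the minimum eigenvalue via Lanczos, run $\mathsf{FISTA\mathrm{+}SFG}$ on the (possibly regularized) convex problem, and lift to the boundary along the approximate eigenvector in the nonconvex case. However, there is a genuine gap in the eigenvector step.

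You only claim the standard Rayleigh-quotient guarantee $\vv_{\min}^\top \mA \vv_{\min}\le \lambda_{\min}(\mA)+\varepsilon_1$ from Lanczos, and then later invoke ``the approximate eigenvector relation $\mA\vv_{\min}\approx\tilde{\lambda}_{\min}\vv_{\min}$'' to conclude that the lifting step contributes at most $|\tau|\,\varepsilon_1$ to the final residual. These are \emph{not} the same guarantee. From the Rayleigh-quotient bound one only obtains
\[
\|(\mA-\lambda_{\min}(\mA)\mI)\vv_{\min}\|^2 \;\le\; (\lambda_{\max}(\mA)-\lambda_{\min}(\mA))\cdot\vv_{\min}^\top(\mA-\lambda_{\min}(\mA)\mI)\vv_{\min}\;\le\; B\,\varepsilon_1,
\]
so the eigenvector residual is only $O(\sqrt{B\varepsilon_1})$, not $O(\varepsilon_1)$. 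With your choice $\varepsilon_1=\Theta(\delta/D)$ and $|\tau|=O(D)$, the lifting term in $\|\mA\hat{\vDelta}+\vb+\vv\|$ is of order $D\sqrt{B\delta/D}=\sqrt{BD\delta}$, which is \emph{not} $O(\delta)$ in the regime of interest. Forcing the residual down to $\delta/D$ via the Rayleigh quotient would require accuracy $\varepsilon_1=O(\delta^2/(BD^2))$ and hence $\tilde{\bigO}(BD/\delta)$ Lanczos iterations, blowing the budget.

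The paper closes this gap by proving a separate (and, as it notes, apparently new) convergence result for Lanczos: it shows that $\min_{\vv\in\mathcal{K}_t}\|(\mA-\hat{\lambda}_{\min}\mI)\vv\|/\|\vv\|$ itself converges to $\lambda_{\min}(\mA)-\hat{\lambda}_{\min}$ at the same $\tilde{\bigO}(\sqrt{B/\varepsilon})$ rate (Lemma~\ref{lem:lanczos_new}), and then takes $\hat{\vv}_{\min}$ to be the minimizer of this ratio rather than of the Rayleigh quotient. This is precisely the missing ingredient in your proposal; once you have $\|(\mA-\hat{\lambda}_{\min}\mI)\hat{\vv}_{\min}\|\le \delta/(2D)$ directly, the rest of your argument goes through as written.
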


\myalert{Implementation of the $\mathsf{SEP}$ oracle.} 
In this part, we provide implementation details for the $\mathsf{SEP}$ oracle used in our projection-free online learning algorithm (see Subroutine~\ref{alg:hessian_approx}). While this oracle has also been employed in \cite{jiang2023online}, we include its construction here for completeness. 

As noted in~\cite{jiang2023online}, the $\mathsf{SEP}$ oracle is closely related to computing the extreme eigenvalues and eigenvectors of a given matrix. %
Specifically, for an input matrix $\mW \in \sS^d$, let $\lambda_{\max}(\mW)$ and $\lambda_{\min}(\mW)$ denote the maximum and minimum eigenvalues of $\mW$, with corresponding unit eigenvectors $\vv_{\max}$ and $\vv_{\min}$, respectively. Note that $\|\mW\|_{\op} = \max\{\lambda_{\max}(\mW), -\lambda_{\min}(\mW)\}$. Hence, by setting $\gamma = \frac{\max\{\lambda_{\max}(\mW), -\lambda_{\min}(\mW)\}}{L_1}$, we observe that $\gamma \leq 1$ certifies $\|\mW\|_{\op} \leq L_1$ (\textbf{Case I} in Definition~\ref{def:extevec}). Otherwise, if $\gamma >1$ (\textbf{Case II}), then scaling $\mW$ by $\gamma$ yields $|\mW/\gamma|{\op} = L_1$, and the eigenvectors $\vv_{\max}$ and $\vv_{\min}$ can be used to construct the separating hyperplane. Indeed, assume without loss of generality that $\lambda_{\max}(\mW) \geq - \lambda_{\min}(\mW)$. By setting $\mS =  \frac{1}{L_1}\vv_{\max} \vv_{\max}^\top$, this ensures that $\|\mS\|_F \leq 1/L_1$, and for any $\mB$ satisfying $\|\mB\|_{\op} \leq L_1$, it holds that  $\langle \mS, \mW - \mB\rangle = \frac{1}{L_1}(\vv_{\max}^\top\mW\vv_{\max} -\vv_{\max}^\top\mB\vv_{\max}) \leq \frac{1}{L_1}(\lambda_{\max}(\mW) -L_1) = \gamma-1$. 
Moreover, as discussed in Appendix~\ref{appen:SEP}, it is sufficient to compute the extreme eigenvalues and eigenvectors of $\mW$ approximately, which can be achieved efficiently by the Lanczos algorithm with a random start~\cite{kuczynski1992estimating}. We summarize the computation cost in the following lemma and defer the details to Appendix~\ref{appen:SEP}. 

\begin{lemma}\label{lem:SEP}
    Given an input $\mW \in \sS^d$, we can implement $\mathsf{SEP}(\mW)$ using $\bigO\left(\log \frac{d}{q^2}\right)$ matrix-vector products with success probability at least $1-q$.
\end{lemma}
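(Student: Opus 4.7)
The plan is to reduce the construction of $\mathsf{SEP}(\mW)$ to approximately computing the algebraic extreme eigenvalues of $\mW$ within a constant factor, along with corresponding unit Ritz vectors, and then invoke the randomized Lanczos analysis of Kuczy\'nski and Wo\'zniakowski. Concretely, I would run the Lanczos iteration twice with independent random Gaussian starts, once on $\mW$ and once on $-\mW$, each for $k$ steps at a cost of one matrix--vector product with $\mW$ per step. This produces Ritz values $\hat\lambda_{\max} \leq \lambda_{\max}(\mW)$ and $-\hat\lambda_{\min} \leq -\lambda_{\min}(\mW)$ together with associated unit Ritz vectors $\hat\vv_{\max},\hat\vv_{\min} \in \reals^d$ satisfying $\hat\vv_{\max}^\top \mW \hat\vv_{\max} = \hat\lambda_{\max}$ and $\hat\vv_{\min}^\top \mW \hat\vv_{\min} = \hat\lambda_{\min}$. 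By the Kuczy\'nski--Wo\'zniakowski bound with target relative accuracy $\epsilon = 1/2$ (whose failure probability carries a $\sqrt{d}\,e^{-\sqrt{\epsilon}(2k-1)}$ tail), choosing $k = \bigO(\log(d/q^2))$ suffices to guarantee, with probability at least $1-q/2$ per run, the constant-factor estimates $\hat\lambda_{\max} \geq \tfrac{1}{2}\lambda_{\max}(\mW)$ and $-\hat\lambda_{\min} \geq -\tfrac{1}{2}\lambda_{\min}(\mW)$. A union bound over the two runs yields total success probability at least $1-q$.

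Given these ingredients, define $\gamma := \max\{\hat\lambda_{\max}, -\hat\lambda_{\min}\}/L_1$ and let $\hat\vv_\star$ be whichever of $\hat\vv_{\max}, \hat\vv_{\min}$ realizes the maximum. If $\gamma \leq 1$, then $\hat\lambda_{\max} \leq L_1$ and $-\hat\lambda_{\min} \leq L_1$, so the approximation bound gives $\lambda_{\max}(\mW) \leq 2L_1$ and $-\lambda_{\min}(\mW) \leq 2L_1$, hence $\|\mW\|_{\op} \leq 2L_1$, matching Case~I of Definition~\ref{def:extevec}. Otherwise $\gamma > 1$, and the same approximation bound still gives $\|\mW\|_{\op} \leq 2\gamma L_1$, i.e.\ $\|\mW/\gamma\|_{\op} \leq 2L_1$. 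Setting $\mS = \hat\vv_\star \hat\vv_\star^\top/L_1$ yields $\|\mS\|_F = 1/L_1$, and for any $\mB \in \sS^d$ with $\|\mB\|_{\op} \leq L_1$,
\begin{equation*}
\langle \mS, \mW - \mB\rangle = \tfrac{1}{L_1}\bigl(\hat\vv_\star^\top \mW \hat\vv_\star - \hat\vv_\star^\top \mB \hat\vv_\star\bigr) \geq \tfrac{1}{L_1}\bigl(\gamma L_1 - L_1\bigr) = \gamma - 1,
\end{equation*}
where I used $\hat\vv_\star^\top \mB \hat\vv_\star \leq \|\mB\|_{\op} \leq L_1$. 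This is exactly the separating-hyperplane guarantee of Case~II, completing the verification of both output cases.

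The main subtlety, which I expect to be the chief technical obstacle, is that the classical Kuczy\'nski--Wo\'zniakowski bound is typically stated for symmetric positive semidefinite matrices, whereas $\mW$ is only symmetric. I would handle this either by observing that the same random-start Lanczos analysis applies directly to the symmetric matrix $\mW$ for approximating its algebraic maximum eigenvalue (since the argument relies only on the spectral gap in the dominant direction, not on positivity), or, more cleanly, by running Lanczos on the PSD matrix $\mW^2$ (using two matrix--vector products with $\mW$ per step) to obtain a constant-factor approximation to $\|\mW\|_{\op}^2$; the sign of the Rayleigh quotient of $\mW$ at the resulting Ritz vector then distinguishes whether the dominant eigenvalue is $\lambda_{\max}(\mW)$ or $\lambda_{\min}(\mW)$, and the Ritz vector itself serves as $\hat\vv_\star$. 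Either implementation preserves the $\bigO(\log(d/q^2))$ matrix--vector product count up to an absolute constant, giving the claimed complexity.
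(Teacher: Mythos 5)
Your overall strategy---randomized Lanczos to produce extreme Ritz pairs, then certify $\|\mW\|_{\op}\leq 2\gamma L_1$ and build $\mS$ from a Ritz vector---matches the paper's Subroutine~\ref{alg:SEP} and Proposition~\ref{prop:SEP}. However, there are two concrete gaps in how you invoke the Kuczy\'nski--Wo\'zniakowski bound, and one sign bug.

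The Kuczy\'nski--Wo\'zniakowski guarantee (Lemma~\ref{prop:lanczos_estimate}) for a general symmetric matrix controls the Ritz error relative to the \emph{spectral spread} $\lambda_{\max}(\mW)-\lambda_{\min}(\mW)$, not multiplicatively in $\lambda_{\max}(\mW)$ itself. Your claimed intermediate estimates $\hat\lambda_{\max}\geq\tfrac12\lambda_{\max}(\mW)$ and $-\hat\lambda_{\min}\geq-\tfrac12\lambda_{\min}(\mW)$ do not both follow: with $\rho=1/4$ the bound only gives $\hat\lambda_{\max}\geq\tfrac34\lambda_{\max}(\mW)+\tfrac14\lambda_{\min}(\mW)$, which can be far below $\tfrac12\lambda_{\max}(\mW)$ when $\lambda_{\min}(\mW)$ is very negative. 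Consequently your Case~I deduction (``the approximation bound gives $\lambda_{\max}(\mW)\leq 2L_1$'') is unjustified as written. The conclusion is reachable, but you must combine the two spread-relative bounds as in the paper: from $\hat\lambda_{\max}\geq\tfrac34\lambda_{\max}+\tfrac14\lambda_{\min}$, $\hat\lambda_{\min}\leq\tfrac14\lambda_{\max}+\tfrac34\lambda_{\min}$, and $\max\{\hat\lambda_{\max},-\hat\lambda_{\min}\}=\gamma L_1$, one deduces $\lambda_{\max}(\mW)\leq\tfrac32\hat\lambda_{\max}-\tfrac12\hat\lambda_{\min}\leq 2\gamma L_1$ and symmetrically $-\lambda_{\min}(\mW)\leq 2\gamma L_1$. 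Your first proposed fix (``the same analysis applies directly'') works only once you switch to this spread-relative form and do the linear combination.

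Your second proposed fix---running Lanczos on $\mW^2$ and reading off the Rayleigh quotient of $\mW$ at the resulting Ritz vector---does not work. When $\lambda_{\max}(\mW)\approx -\lambda_{\min}(\mW)$, the top eigenspace of $\mW^2$ is spanned by eigenvectors of $\mW$ near both $+\|\mW\|_{\op}$ and $-\|\mW\|_{\op}$; the Ritz vector $\hat\vv_\star$ returned for $\mW^2$ can be an arbitrary mixture of these, in which case $\hat\vv_\star^\top\mW\hat\vv_\star$ may be near zero rather than near $\pm\|\mW\|_{\op}$, and $\mS$ built from it gives no separating hyperplane. Finally, a minor bug in Case~II: when the negative extreme dominates, i.e.\ $\hat\vv_\star=\hat\vv_{\min}$ with $\gamma L_1=-\hat\lambda_{\min}$, your $\mS=\tfrac{1}{L_1}\hat\vv_\star\hat\vv_\star^\top$ gives $\langle\mS,\mW\rangle=\hat\lambda_{\min}/L_1=-\gamma$, so $\langle\mS,\mW-\mB\rangle\geq-\gamma-1$, not $\gamma-1$; you need $\mS=-\tfrac{1}{L_1}\hat\vv_{\min}\hat\vv_{\min}^\top$ in that branch, as in the paper.
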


\begin{comment}
    Recall that in Theorem~\ref{thm:convergence_rate}, we choose $\delta = \frac{D}{\eta T}$. The total number of matrix-vector products will be 
\begin{equation*}
    \tilde{\bigO}\left(\sqrt{\eta T} \cdot M \right) = \tilde{O} \left( \frac{M^{29/26}}{(f(\vx_0)-f^*)^{3/26} (d L_1)^{2/13} L_2^{3/13}} \right). 
\end{equation*}
Using the fact that $\bigO \left(
        (f(\vx_0) - f^*)^{\frac{8}{13}} L_1^{\frac{2}{13}} L_2^{\frac{3}{13}} \frac{d^{\frac{2}{13}}}{M^{\frac{8}{13}}} \right) = \varepsilon$, we obtain $M = \bigO\left( (f(\vx_0)-f^*)(dL_1)^{1/4} L_2^{3/8} \varepsilon^{-13/8} \right)$. Thus, overall we get 
        \begin{equation*}
            \tilde{O}\left(\sqrt{\eta T} \cdot M \right) = \tilde{\bigO}\left( \frac{(f(\vx_0)-f^*) (d L_1)^{1/8} L_2^{3/16}}{\varepsilon^{29/16}} \right)
        \end{equation*}
\end{comment}

Now we are ready to state the total computational cost of Algorithm~\ref{alg:conversion} in terms of the number of matrix-vector products. Note that each iteration of Algorithm~\ref{alg:conversion} requires one call to the $\mathsf{TRSolver}$ oracle and one call to the $\mathsf{SEP}$ oracle. By using Lemmas~\ref{lem:TRSolver} and~\ref{lem:SEP} and setting the proper parameters, we arrive at the following result. The proof is given in Appendix~\ref{appen:computational}.

\begin{theorem}\label{thm:computational}
\looseness = -1
    To find an $\epsilon$-FOSP using Algorithm~\ref{alg:conversion}, the total number of matrix-vector products required by the $\mathsf{TRSolver}$ and $\mathsf{SEP}$ oracles are bounded by $\tilde{\bigO}\Bigl( \frac{d^{1/8}}{\epsilon^{29/16}} + \frac{d^{3/8}}{\epsilon^{27/16}} \Bigr) $
    and $\tilde{\bigO}\Bigl( \frac{d^{1/4}}{\epsilon^{13/8}} \Bigr)$, respectively. 
\end{theorem}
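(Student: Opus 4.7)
The plan is to bound the per-iteration cost of $\mathsf{TRSolver}$ and $\mathsf{SEP}$ separately and then multiply by the iteration count $M$ from Theorem~\ref{thm:convergence_rate}, since Algorithm~\ref{alg:conversion} invokes each oracle exactly once per iteration. Inverting the rate in Theorem~\ref{thm:convergence_rate} for $\min_k \|\nabla f(\bar{\vw}^k)\| \leq \epsilon$ gives $M = \bigO(d^{1/4}\epsilon^{-13/8})$ in the $\epsilon$- and $d$-dependence, which is essentially the whole story for $\mathsf{SEP}$: Lemma~\ref{lem:SEP} shows each call costs $\bigO(\log(d/q^2))$ matrix-vector products, and a union bound over $M$ calls with failure probability $\Theta(1/M)$ each immediately yields the claimed $\tilde{\bigO}(d^{1/4}\epsilon^{-13/8})$.

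The more delicate part is $\mathsf{TRSolver}$, where I first need to control the parameter $B$ appearing in Lemma~\ref{lem:TRSolver} for the matrices $\mA_n = \tfrac{1}{2}\mB_n + \tfrac{1}{\eta}\mI$. Subroutine~\ref{alg:hessian_approx} together with Definition~\ref{def:extevec} guarantees $\|\mB_n\|_{\op} \leq 2L_1$ for every $n$, and the identity shift cancels in differences, giving $\lambda_{\max}(\mA_n) - \lambda_{\min}(\mA_n) \leq \|\mB_n\|_{\op} \leq 2L_1$ and $\lambda_{\max}(\mA_n) \leq L_1 + 1/\eta$. Hence I take $B = \Theta(L_1 + 1/\eta)$. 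Combined with the prescribed accuracy $\delta = D/(\eta T)$, Lemma~\ref{lem:TRSolver} yields a per-call cost
\[
\tilde{\bigO}\bigl(\sqrt{BD/\delta}\bigr) = \tilde{\bigO}\bigl(\sqrt{(L_1 + 1/\eta)\eta T}\bigr) = \tilde{\bigO}\bigl(\sqrt{L_1 \eta T} + \sqrt{T}\bigr),
\]
splitting the bound into two terms that I will multiply by $M$.

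The last step is substitution using the parameter choices from Theorem~\ref{thm:convergence_rate}. Plugging $M \sim d^{1/4}\epsilon^{-13/8}$ into the formula for $D$ gives $D \sim \epsilon^{5/8}/d^{1/4}$; unwinding the definitions $\eta \sim (dD^{2/3})^{-3/5}$ and $T \sim (D\eta)^{-1/3}$ then gives $\eta \sim d^{-1/2}\epsilon^{-1/4}$ and $T \sim d^{1/4}\epsilon^{-1/8}$ in the $\epsilon$- and $d$-dependence. Substituting, $M\sqrt{T} \sim d^{3/8}\epsilon^{-27/16}$ and $M\sqrt{L_1\eta T} \sim d^{1/8}\epsilon^{-29/16}$, producing the announced $\tilde{\bigO}(d^{1/8}\epsilon^{-29/16} + d^{3/8}\epsilon^{-27/16})$ for $\mathsf{TRSolver}$. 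A final union bound over the $M$ $\mathsf{TRSolver}$ calls with per-call failure probability $\Theta(1/M)$ contributes only logarithmic factors absorbed into $\tilde{\bigO}$. The main obstacle is purely bookkeeping: the four free parameters $D, \eta, T, \delta$ are interdependent through the optimality conditions of Theorem~\ref{thm:convergence_rate}, so verifying each exponent requires care, but no new algorithmic or probabilistic content is needed beyond what is already packaged in Lemmas~\ref{lem:TRSolver} and~\ref{lem:SEP}.
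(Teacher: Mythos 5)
Your proposal is correct and follows essentially the same route as the paper's own proof in Appendix~\ref{appen:computational}: bound the per-call cost of each oracle using Lemmas~\ref{lem:TRSolver} and~\ref{lem:SEP} with $B = \Theta(L_1 + 1/\eta)$ and $\delta = D/(\eta T)$, take a union bound with failure probability $\Theta(1/M)$ per call, multiply by $M = \bigO(d^{1/4}\epsilon^{-13/8})$, and substitute the parameter scalings $D \sim \epsilon^{5/8}d^{-1/4}$, $\eta \sim d^{-1/2}\epsilon^{-1/4}$, $T \sim d^{1/4}\epsilon^{-1/8}$. The exponent arithmetic checks out and reproduces the claimed bounds.
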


\section{Discussion on the Lower Bound and Concluding Remarks}

In this paper, we proposed a novel method for improving the complexity of finding an \(\varepsilon\)-first-order stationary point (FOSP) by integrating a two-level online learning framework. Our results demonstrated that by leveraging this method, we can achieve an improved complexity bound of \(\mathcal{O}( \frac{d^{1/4}}{\varepsilon^{13/8}})\), which surpasses the best-known bound with a first-order oracle if \(d = \mathcal{O}( \frac{1}{\sqrt{\varepsilon}} )\).

To better understand the upper bound on the complexity of our proposed method, we compare it with the best-known lower bound for the relevant setting. Specifically, for the setting where the gradient and Hessian are both Lipschitz and only gradient information is accessible, the authors in  \cite{carmon2021lower} established  a lower bound of \(\Omega( \frac{1}{\varepsilon^{12/7}} )\) for the number of gradient queries required to find an \(\varepsilon\)-FOSP. While our dependency on \(\varepsilon\) initially appears more favorable than this lower bound, our result does not violate it because the lower bound only applies when \(d \geq \frac{1}{\varepsilon^{12/7}}\). Since our bound also depends on \(d\), when \(d \geq \frac{1}{\varepsilon^{12/7}}\), our upper bound could degrade to \(\mathcal{O}( \frac{1}{\varepsilon^{(3/7)+(13/8)}} )\), which simplifies to \(\mathcal{O}( \frac{1}{\varepsilon^{115/56}} )\). This complexity is indeed worse than the lower bound of \(\Omega( \frac{1}{\varepsilon^{12/7}})\).

Based on the above discussion, a promising direction for future research is to establish a lower bound that is either dimension-independent, applying for any \(d\), or specifically suited to small-to-moderate values of \(d\), particularly in the regime \(d \leq \frac{1}{\sqrt{\varepsilon}}\), where our proposed method outperforms the known complexity of \(\mathcal{O}( \frac{1}{\varepsilon^{7/4}} )\).

\section*{Acknowledgments}

{This work is supported in part by NSF Grant CCF-2007668 and the NSF AI Institute for Foundations of Machine Learning (IFML). The authors would like to thank Ashok Cutkosky and Francesco Orabona for their helpful comments on the first draft of the paper.}

\newpage
\appendix
\section*{Appendix}
\section{
Proofs for Online-to-non-convex Conversion
}\label{Proof_appendix}

\subsection{Proof of Lemma \texorpdfstring{\ref{lem:conversion}}{2.1}}\label{appen:conversion_1_s}

By the fundamental theorem of calculus and using $\vx_{n} - \vx_{n-1}  =\vDelta_n$, we have 
\begin{equation}\label{eq:integral}
    f(\vx_n) - f(\vx_{n-1}) = \int_{0}^1 \langle \nabla f(\vx_{n-1}+s(\vx_n-\vx_{n-1})), \vx_n-\vx_{n-1}\rangle \; ds = \Bigl\langle \int_{0}^1 \nabla f(\vx_{n-1}+s\vDelta_n) \;ds,
 \vDelta_n\Bigr\rangle.
\end{equation}
Moreover, recall that $\vg_n = \nabla f(\vw_n)$ and $\vw_n = \frac{1}{2}(\vx_{n-1}+\vx_n) = \vx_{n-1}+\frac{1}{2}\vDelta_n$. Hence, we can bound 
\begin{align}
   &\phantom{{}={}}\left\| \int_{0}^1 \nabla f(\vx_{n-1}+s\vDelta_n) \;ds - \vg_n\right\| \nonumber\\
   &=  \left\| \int_{0}^1 \left(\nabla f(\vx_{n-1}+s\vDelta_n) -\nabla f\left(\vx_{n-1} + \frac{1}{2}\vDelta_n\right)\right) \;ds\right\| \nonumber\\
   & = \left\| \int_{0}^1 \left(\nabla f(\vx_{n-1}+s\vDelta_n) -\nabla f\left(\vx_{n-1} + \frac{1}{2}\vDelta_n\right) - \nabla^2 f\left(\vx_{n-1} + \frac{1}{2}\vDelta_n\right)\left(s-\frac{1}{2}\right)\vDelta_n\right) \;ds\right\| \label{eq:insert_hessian}\\
   &\leq  \int_{0}^1 \left\|\nabla f(\vx_{n-1}+s\vDelta_n) -\nabla f\left(\vx_{n-1} + \frac{1}{2}\vDelta_n\right) - \nabla^2 f\left(\vx_{n-1} + \frac{1}{2}\vDelta_n\right)\left(s-\frac{1}{2}\right)\vDelta_n\right\| \;ds \nonumber\\
   &\leq \int_{0}^1 \frac{L_2}{2}\|\vDelta_n\|^2 \left(s-\frac{1}{2}\right)^2 \;ds = \frac{1}{48}L_2 D^2,\label{eq:gradient_diff}
\end{align}
where we used the fact that $\int_{0}^1 (s-\frac{1}{2}) \; ds = 0$ in \eqref{eq:insert_hessian} and Assumption~\ref{asm:L2} in~\eqref{eq:gradient_diff}. Combining \eqref{eq:integral} and \eqref{eq:gradient_diff} leads to  
\begin{align*}
    f(\vx_n) - f(\vx_{n-1}) &= \left\langle \int_{0}^1 \nabla f(\vx_{n-1}+s\vDelta_n) \;ds,
 \vDelta_n\right\rangle \\
  &= \langle \vg_n, \vDelta_n \rangle + \left\langle \int_{0}^1 \nabla f(\vx_{n-1}+s\vDelta_n) \;ds -\vg_n, \vDelta_n\right\rangle \\
 &\leq \langle \vg_n, \vDelta_n \rangle + \left\| \int_{0}^1 \nabla f(\vx_{n-1}+s\vDelta_n) \;ds -\vg_n \right\| \|\vDelta_n\| \\
 & \leq \langle \vg_n, \vDelta_n \rangle + \frac{1}{48}L_2 D^3.
\end{align*}
where the first inequality follows from the Cauchy-Schwarz inequality and the second inequality is due to \eqref{eq:gradient_diff} and the fact that $\|\vDelta_n\| \leq D$. Lemma~\ref{lem:conversion} follows by rearranging the above inequality.   

\subsection{Proof of Lemma \texorpdfstring{\ref{lemma:averaged_gradient}}{2.2}}\label{averaged_gradient_s}
    Recall that $\bar{\vw} = \frac{1}{T} \sum_{n=1}^T \vw_n$, which implies that $\sum_{n=1}^T \nabla^2 f(\bar{\vw})(\vw_n 
 -\bar{\vw}) = 0$. Hence, we can write 
 \begin{align}
     \left\|\frac{1}{T}\sum_{n=1}^T \nabla f(\vw_n) - \nabla f(\bar{\vw})\right\| &= \left\|\frac{1}{T}\sum_{n=1}^T \left(\nabla f(\vw_n) - \nabla f(\bar{\vw})\right)\right\| \nonumber\\
     & = \left\|\frac{1}{T}\sum_{n=1}^T \left(\nabla f(\vw_n) - \nabla f(\bar{\vw}) - \nabla^2 f(\bar{\vw})(\vw_n 
 -\bar{\vw})\right)\right\| \nonumber\\
 & \leq \frac{1}{T}\sum_{n=1}^T \left\|\nabla f(\vw_n) - \nabla f(\bar{\vw}) - \nabla^2 f(\bar{\vw})(\vw_n 
 -\bar{\vw})\right\| \nonumber\\
 & \leq \frac{L_2}{2T} \sum_{n=1}^T \|\vw_n 
 -\bar{\vw}\|^2. \label{eq:gradient_difference_bound}
 \end{align}
 Here, the first inequality is due to the triangle inequality and the second inequality follows from Assumption~\ref{asm:L2}. 
 To bound $\|\vw_n - \bar{\vw}\|$, recall that $\vw_n = \vx_{n-1} + \frac{1}{2}\vDelta_n$ and $\vx_n = \vx_{n-1} + \vDelta_n$ for $1\leq n \leq T$. Consider any $s,t \in \{1,\dots,T\}$ and assume that $s > t$ without loss of generality. Then we have:
 \begin{align*}
     \vw_t- \vw_s =  \vx_{t-1}+\frac{1}{2}\vDelta_t - \vx_{s-1} - \frac{1}{2}\vDelta_s 
     &= \vx_{t-1} +\frac{1}{2}\vDelta_t - \Bigl(\vx_{t-1} + \sum_{i=t}^{s-1} \vDelta_i \Bigr) - \frac{1}{2}\vDelta_s \\
     &=  - \frac{1}{2}\vDelta_t - \sum_{i=t+1}^{s-1} \vDelta_i  - \frac{1}{2}\vDelta_s.  
 \end{align*}
 Since $\|\vDelta_n\| \leq D$ for any $n \in \{1,\dots,T\}$, it follows from the triangle inequality that $\|\vw_t- \vw_s\| \leq D(s-t) \leq TD$ for any $s,t \in \{1,\dots,T\}$. 
 Since $\bar{\vw} = \frac{1}{T} \sum_{n=1}^T \vw_n$,  we further have $\|\vw_n -\bar{\vw}\| \leq TD$ for any $n \in \{1,\dots,T\}$. Consequently, we obtain from \eqref{eq:gradient_difference_bound} that 
 \begin{equation*}
      \left\|\left(\frac{1}{T}\sum_{n=1}^T \nabla f(\vw_n) \right)- \nabla f(\bar{\vw})\right\| \leq \frac{L_2}{2}T^2 D^2. 
 \end{equation*}
 Therefore, it follows from the triangle inequality that
\begin{align*}
     \frac{L_2}{2}T^2 D^2 & \geq \left\|\frac{1}{T}\sum_{n=1}^T \nabla f(\vw_n) - \nabla f(\bar{\vw})\right\|  \geq \left\| \nabla f(\bar{\vw})\right\| - 
       \left\|\frac{1}{T}\sum_{n=1}^T \nabla f(\vw_n) \right\|. 
 \end{align*}
Noting that $\vg_n =\nabla f(\vw_n)$, this completes the proof of Lemma~\ref{lemma:averaged_gradient}.

\subsection{Proof of Proposition \texorpdfstring{\ref{pr:average_gradient}}{2.3}}
Consider the $k$-th episode ($k=1,2,\dots,K$) from $n = (k-1)T+1$ to $n = kT$. By applying the inequality in~\eqref{eq:bound_1}, we obtain 
\begin{equation*}
    \bigg\|\frac{1}{T}\sum_{n=(k-1)T+1}^{kT} \vg_n\bigg\|\leq  \frac{f(\vx_{(k-1)T})- f(\vx_{kT})}{DT} + \frac{1}{DT}\sum_{n=(k-1)T+1}^{kT} \vg_n^\top (\vDelta_n-\vu^k) +\frac{L_2D^2}{48}. 
\end{equation*} 
Moreover, recall that $\bar{\vw}^k = \frac{1}{T} \sum_{n=(k-1)T+1}^{kT} \vw_n$ and it follows from Lemma~\ref{lemma:averaged_gradient} that   
$\|\nabla f(\bar{\vw}^k)\| \leq \bigg\|\frac{1}{T}\sum_{n=(k-1)T+1}^{kT} \vg_n\bigg\| + \frac{L_2}{2}T^2 D^2$. Together with the above inequality, this leads to 
\begin{equation*}
    \|\nabla f(\bar{\vw}^k)\| \leq \frac{f(\vx_{(k-1)T})- f(\vx_{kT})}{DT} + \frac{1}{DT}\sum_{n=(k-1)T+1}^{kT} \vg_n^\top (\vDelta_n-\vu^k) +\frac{L_2D^2}{48} + \frac{L_2}{2}T^2 D^2. 
\end{equation*}
Summing the above inequality from $k=1$ to $k=K$ and dividing both sides by $K$ yields:  
\begin{equation*}
   \frac{1}{K} %
   \sum_{k=1}^K \|\nabla f(\bar{\vw}^k)\| 
   \leq  \frac{f(\vx_{0})- f(\vx_{M})}{KDT} + \frac{1}{KDT}\sum_{k=1}^K \sum_{n=(k-1)T+1}^{kT} \vg_n^\top (\vDelta_n-\vu^k) +\frac{L_2D^2}{48} + \frac{L_2}{2}T^2 D^2. 
\end{equation*}
Finally, we note that $f(\vx_M) \geq f^*$ and this completes the proof.

\section{Proofs for Section \texorpdfstring{\ref{sec:algorithm}}{3}}

\subsection{Proof of Lemma \texorpdfstring{\ref{lem:inclusion}}{3.1}} 
    Recall that $\vh_n = \nabla f(\vz_{n-1}) + \frac{1}{2}\mB_{n-1} (\vDelta_{n} - \vDelta_{n-1})$. 
    By the property of Euclidean projection, the equation in~\eqref{eq:implicit_quasi_newton} is satisfied if and only if 
    \begin{equation}\label{eq:inclusion}
        \vDelta_n - \eta \left( \nabla f(\vz_{n}) + \frac{1}{2}\mB_n (\vDelta_{n+1} - \vDelta_{n})  \right)  - \eta \left(\vg_n - \vh_n\right) - \vDelta_{n+1} \in \mathcal{N}_{\{\|\vDelta\| \leq D\}}({\vDelta_{n+1}}).
    \end{equation}
    Moreover, given our definitions in \eqref{eq:def_An_bn}, we can rewrite the right-hand side of \eqref{eq:inclusion} as $-\eta(\mA_n \vDelta_{n+1} + \vb_n)$. By dividing both sides by $\eta$ and rearranging, we obtain the inclusion problem stated in Lemma~\ref{lem:inclusion}.

\subsection{Proof of Lemma \texorpdfstring{\ref{lem:learning_regret}}{3.2}}

Before presenting the proof of Lemma~\ref{lem:learning_regret}, we first establish the following auxiliary lemma. 
\begin{lemma}\label{lem:one_step_regret}
    Consider 
    the update rule $
    \vDelta_{n+1} = \mathsf{TRSolver}(\mA_n, \vb_n, D, \delta)$.
    Then for any $\vu$ such that $\|\vu\| \leq D$, it holds that 
    \begin{align*}
        \langle \vg_{n+1}, \vDelta_{n+1} - \vu \rangle &\leq \frac{\|\vDelta_n-\vu\|^2}{2\eta}   - \frac{\|\vDelta_{n+1}-\vu\|^2}{2\eta} +  \langle \vg_{n+1} - \vh_{n+1}, \vDelta_{n+1} - \vu \rangle -  \langle \vg_{n} - \vh_{n}, \vDelta_{n} - \vu \rangle \\
        &\phantom{{}\leq{}} + \eta\|\vg_n - \vh_n\|^2 - \frac{1}{4\eta}\|\vDelta_{n+1} - \vDelta_n\|^2 + {2D \delta}.
    \end{align*}
\end{lemma}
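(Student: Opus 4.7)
The plan is to mimic the standard optimistic online gradient descent one-step regret analysis, with two twists: (i) we exploit the specific algebraic identity linking $\vh_{n+1}$, $\mA_n$, $\vb_n$ to recast $\mA_n\vDelta_{n+1}+\vb_n$ as the ``optimistic gradient'' appearing in a vanilla optimistic mirror descent step, and (ii) we carefully absorb the inexactness $\delta$ of $\mathsf{TRSolver}$ into a controlled additive term.

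First, I would substitute the definitions $\mA_n=\tfrac12 \mB_n+\tfrac{1}{\eta}\mI$ and $\vb_n=\nabla f(\vz_n)+\vg_n-\vh_n-\tfrac12 \mB_n\vDelta_n-\tfrac{1}{\eta}\vDelta_n$, and use the hint formula $\vh_{n+1}=\nabla f(\vz_n)+\tfrac12 \mB_n(\vDelta_{n+1}-\vDelta_n)$ to derive the identity
\begin{equation*}
    \mA_n\vDelta_{n+1}+\vb_n \;=\; \vh_{n+1} + (\vg_n-\vh_n) + \tfrac{1}{\eta}(\vDelta_{n+1}-\vDelta_n).
\end{equation*}
By Definition~\ref{def:TRSolver}, there exist $\vv\in \mathcal{N}_{\{\|\vDelta\|\leq D\}}(\vDelta_{n+1})$ and $\vr$ with $\|\vr\|\leq \delta$ such that $\mA_n\vDelta_{n+1}+\vb_n+\vv=\vr$. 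Taking the inner product with $\vDelta_{n+1}-\vu$ and using the normal-cone inequality $\langle \vv,\vDelta_{n+1}-\vu\rangle\geq 0$ together with Cauchy--Schwarz and $\|\vDelta_{n+1}-\vu\|\leq 2D$, I get
\begin{equation*}
    \Bigl\langle \vh_{n+1}+(\vg_n-\vh_n)+\tfrac{1}{\eta}(\vDelta_{n+1}-\vDelta_n),\; \vDelta_{n+1}-\vu\Bigr\rangle \;\leq\; 2D\delta.
\end{equation*}

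Next, I apply the three-point identity $\langle \vDelta_{n+1}-\vDelta_n,\vDelta_{n+1}-\vu\rangle=\tfrac12\|\vDelta_{n+1}-\vDelta_n\|^2+\tfrac12\|\vDelta_{n+1}-\vu\|^2-\tfrac12\|\vDelta_n-\vu\|^2$ to convert the $\tfrac{1}{\eta}$ term into the telescoping distance terms on the right-hand side of the claim. Rearranging gives an upper bound on $\langle \vh_{n+1},\vDelta_{n+1}-\vu\rangle$. To pass from $\vh_{n+1}$ to $\vg_{n+1}$, I use $\langle \vg_{n+1},\vDelta_{n+1}-\vu\rangle=\langle \vh_{n+1},\vDelta_{n+1}-\vu\rangle+\langle \vg_{n+1}-\vh_{n+1},\vDelta_{n+1}-\vu\rangle$, which produces the ``hint error'' term at index $n+1$.

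Finally, to produce the telescoping hint-error term at index $n$, I split
\begin{equation*}
    -\langle \vg_n-\vh_n,\vDelta_{n+1}-\vu\rangle \;=\; -\langle \vg_n-\vh_n,\vDelta_n-\vu\rangle - \langle \vg_n-\vh_n,\vDelta_{n+1}-\vDelta_n\rangle,
\end{equation*}
and apply Young's inequality $\langle \vg_n-\vh_n,\vDelta_n-\vDelta_{n+1}\rangle \leq \eta\|\vg_n-\vh_n\|^2+\tfrac{1}{4\eta}\|\vDelta_{n+1}-\vDelta_n\|^2$. Combining this with the $-\tfrac{1}{2\eta}\|\vDelta_{n+1}-\vDelta_n\|^2$ term coming from the three-point identity leaves a residual of $-\tfrac{1}{4\eta}\|\vDelta_{n+1}-\vDelta_n\|^2$, reproducing the claimed inequality.

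The main obstacle I anticipate is purely bookkeeping: making sure the inexactness term $2D\delta$ is obtained with the correct constant from the normal cone inequality (since $\|\vDelta_{n+1}-\vu\|\leq 2D$, not $D$), and confirming that the $-\tfrac{1}{4\eta}\|\vDelta_{n+1}-\vDelta_n\|^2$ residual has the correct sign and constant after Young's inequality is invoked. The rest is a direct adaptation of the well-known optimistic OGD one-step bound.
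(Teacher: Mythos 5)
Your proposal follows essentially the same line of argument as the paper's own proof: rewrite $\mA_n\vDelta_{n+1}+\vb_n$ as the optimistic-gradient-plus-prox term, invoke Definition~\ref{def:TRSolver} with the normal-cone inequality and Cauchy--Schwarz to absorb the inexactness into a $2D\delta$ term, apply the three-point identity, pass from $\vh_{n+1}$ to $\vg_{n+1}$ by adding and subtracting, and finish with Young's inequality to leave the $-\tfrac{1}{4\eta}\|\vDelta_{n+1}-\vDelta_n\|^2$ residual. The bookkeeping details you flagged (the $2D$ diameter and the Young's constants) all work out exactly as you anticipate, so the proposal is correct and matches the paper's approach.
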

\begin{proof}
{By using the definition of $\mathsf{TRSolver}$ in Definition~\ref{def:TRSolver}, there exists $\vv \in \mathcal{N}_{\{\|\vDelta\|\leq D\}}({\vDelta}_{n+1})$ such that $\|\mA_n \vDelta_{n+1} + \vb_n + \vv\| \leq \delta$.  Hence, by the Cauchy-Schwarz inequality, we obtain 
\begin{equation*}
    \langle \mA_n \vDelta_{n+1} + \vb_n + \vv, \vu - \vDelta_{n+1}  \rangle \geq - \|\mA_n \vDelta_{n+1} + \vb_n + \vv\| \|\vu - \vDelta_{n+1} \| = -2 D\delta. 
\end{equation*}
Moreover, it follows from the definition of the normal cone that $ \langle \vv, \vDelta_{n+1} - \vu\rangle \geq 0$ for any $\vu\in \{\vDelta \in \reals^d: \|\vDelta\|\leq D\}$. Hence, we further have 
\begin{equation}\label{eq:lower_bound_gap}
    \langle \mA_n \vDelta_{n+1} + \vb_n, \vu - \vDelta_{n+1}  \rangle \geq  -2 D\delta + \langle \vv, \vDelta_{n+1} - \vu\rangle \geq -2 D\delta. 
\end{equation}
From \eqref{eq:def_An_bn} and \eqref{hint}, we can rewrite $\mA_n \vDelta_{n+1} + \vb_n = \frac{1}{\eta}(\vDelta_{n+1}-\vDelta_n) + \vh_{n+1} + (\vg_n - \vh_n)$. Combining this with \eqref{eq:lower_bound_gap} leads to  
}
\begin{equation*}
    \bigl\langle \frac{1}{\eta}(\vDelta_{n+1}-\vDelta_n) +  \vh_{n+1} + (\vg_n - \vh_n), \vu - \vDelta_{n+1} \bigr\rangle \geq -2D \delta.
\end{equation*}
    Hence, it implies that 
    \begin{align*}
        \langle \vg_{n+1}, \vDelta_{n+1} - \vu \rangle &\leq \langle \vg_{n+1} - \vh_{n+1}, \vDelta_{n+1} - \vu \rangle - \langle \vg_{n} - \vh_{n}, \vDelta_{n+1} - \vu \rangle \\
        &\phantom{{}\leq{}}+ \frac{1}{2\eta}\|\vDelta_n-\vu\|^2 - \frac{1}{2\eta}\|\vDelta_{n+1} - \vDelta_n\|^2 - \frac{1}{2\eta}\|\vDelta_{n+1}-\vu\|^2 + 2D \delta,
    \end{align*}
    where we have used the three-point equality $ \langle \vDelta_{n+1}-\vDelta_n, \vu  - \vDelta_{n+1}  \rangle = \frac{1}{2}\|\vDelta_n-\vu\|^2 - \frac{1}{2}\|\vDelta_{n+1} - \vDelta_n\|^2 - \frac{1}{2}\|\vDelta_{n+1}-\vu\|^2$.
    Furthermore, we have:
    \begin{equation*}
        -\langle \vg_{n} - \vh_{n}, \vDelta_{n+1} - \vu \rangle = -\langle \vg_{n} - \vh_{n}, \vDelta_{n} - \vu \rangle + \langle \vg_{n} - \vh_{n}, \vDelta_n -\vDelta_{n+1} \rangle.
    \end{equation*}
We can further bound the second term as follows: 
\begin{equation*}
    \langle \vg_{n} - \vh_{n}, \vDelta_n -\vDelta_{n+1}  \rangle \leq \|\vg_{n} - \vh_{n}\| \|\vDelta_n -\vDelta_{n+1} \|     
    \leq \eta\|\vg_{n} - \vh_{n}\|^2 + \frac{1}{4 \eta}\|\vDelta_{n+1} - \vDelta_n\|^2,  
\end{equation*}
where the last inequality is due to a weighted version of Young's inequality. Combining all the inequalities gives us the desired result. 
\end{proof}

Using Lemma~\ref{lem:one_step_regret}, we can bound the regret for each episode in the following lemma. 
\begin{lemma}\label{lem:episode_regret}
    Consider the optimistic quasi-Newton update in \eqref{eq:implicit_quasi_newton}. 
    Then for $k=1$ and any $\vu^{1}$ such that $\|\vu^{1}\| \leq D$, we have 
    \begin{equation}\label{eq:k=1_c3}
    \sum_{n=1}^T \langle \vg_n, \vDelta_n - \vu^{1}\rangle \leq \frac{2D^2}{\eta} +   \sum_{n=1}^{T} \eta\|\vg_n - \vh_n\|^2 + {2DT \delta}. 
    \end{equation}
    For $k\geq 2$ and any $\vu^{k}$ such that $\|\vu^{k}\| \leq D$, we have 
    \begin{equation}\label{eq:k>=2_c3}
    \sum_{n=(k-1)T+1}^{kT} \langle \vg_n, \vDelta_n - \vu^{k}\rangle \leq \frac{4D^2}{\eta} + \frac{\eta}{2} \left\| \vg_{(k-1)T} - \vh_{(k-1)T} \right\|^2 +  \sum_{n=(k-1)T}^{kT} \eta\|\vg_n - \vh_n\|^2 + {2DT \delta}. 
    \end{equation}
\end{lemma}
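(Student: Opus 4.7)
}
The strategy is to instantiate Lemma~\ref{lem:one_step_regret} at every index inside the episode, sum, and exploit the telescoping structure of the right-hand side, closing the resulting boundary terms with Young's inequality and the norm constraint $\|\vDelta_n\|, \|\vu^k\| \leq D$ (hence $\|\vDelta_n - \vu^k\|^2 \leq 4D^2$). The negative $-\|\vDelta_{n+1}-\vDelta_n\|^2/(4\eta)$ contributions will simply be dropped, the $2D\delta$ terms will collect into $2DT\delta$, and the $\eta\|\vg_n-\vh_n\|^2$ terms will accumulate; all of the real work lies in the endpoints.

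For $k\ge 2$, I would apply Lemma~\ref{lem:one_step_regret} with $\vu=\vu^k$ at $n=(k-1)T,(k-1)T+1,\dots,kT-1$, so that the left-hand sides sum to $\sum_{n=(k-1)T+1}^{kT}\langle \vg_n,\vDelta_n-\vu^k\rangle$. On the right, the quadratic distances telescope to $\tfrac{1}{2\eta}\|\vDelta_{(k-1)T}-\vu^k\|^2 - \tfrac{1}{2\eta}\|\vDelta_{kT}-\vu^k\|^2$ and the optimistic correction telescopes to $\langle \vg_{kT}-\vh_{kT},\vDelta_{kT}-\vu^k\rangle - \langle \vg_{(k-1)T}-\vh_{(k-1)T},\vDelta_{(k-1)T}-\vu^k\rangle$. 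The two leftover bilinear endpoints are then handled by different Young splits: $+\langle \vg_{kT}-\vh_{kT},\vDelta_{kT}-\vu^k\rangle \le \eta\|\vg_{kT}-\vh_{kT}\|^2 + \tfrac{1}{4\eta}\|\vDelta_{kT}-\vu^k\|^2$, whose quadratic residue is absorbed by the surviving $-\tfrac{1}{2\eta}\|\vDelta_{kT}-\vu^k\|^2$ and whose squared-hint-error piece extends the accumulated $\eta\|\vg_n-\vh_n\|^2$ sum to $n=kT$; and $-\langle \vg_{(k-1)T}-\vh_{(k-1)T},\vDelta_{(k-1)T}-\vu^k\rangle \le \tfrac{\eta}{2}\|\vg_{(k-1)T}-\vh_{(k-1)T}\|^2 + \tfrac{1}{2\eta}\|\vDelta_{(k-1)T}-\vu^k\|^2$, whose first piece is exactly the isolated $\tfrac{\eta}{2}\|\vg_{(k-1)T}-\vh_{(k-1)T}\|^2$ term in the claim, and whose second piece adds another $2D^2/\eta$ on top of the $2D^2/\eta$ coming from the telescoped $\tfrac{1}{2\eta}\|\vDelta_{(k-1)T}-\vu^k\|^2$, producing the $4D^2/\eta$ constant in \eqref{eq:k>=2_c3}.

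The $k=1$ case differs only because Lemma~\ref{lem:one_step_regret}, referring to $\vDelta_{n-1},\vh_{n-1},\vg_{n-1}$, is only available for $n=1,\dots,T-1$, so $\langle \vg_1,\vDelta_1-\vu^1\rangle$ must be bounded by hand. The key observation is that the initialization $\vDelta_1=-D\,\vh_1/\|\vh_1\|$ with $\vh_1=\nabla f(\vx_0)$ is the exact minimizer of $\langle \vh_1,\vDelta\rangle$ over $\{\|\vDelta\|\le D\}$, so $\langle \vh_1,\vDelta_1-\vu^1\rangle\le 0$ for every feasible $\vu^1$, giving $\langle \vg_1,\vDelta_1-\vu^1\rangle\le \langle \vg_1-\vh_1,\vDelta_1-\vu^1\rangle$. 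Adding this to the telescoped sum, the $-\langle \vg_1-\vh_1,\vDelta_1-\vu^1\rangle$ produced by the optimistic-correction telescoping cancels it exactly, leaving only $\langle \vg_T-\vh_T,\vDelta_T-\vu^1\rangle$, which is absorbed by the same Young split against the negative $-\tfrac{1}{2\eta}\|\vDelta_T-\vu^1\|^2$. Since no lower-boundary bilinear survives, the $k=1$ bound pays only the single $\tfrac{1}{2\eta}\|\vDelta_1-\vu^1\|^2\le 2D^2/\eta$, explaining the cleaner constant in \eqref{eq:k=1_c3} and the absence of any $\tfrac{\eta}{2}\|\vg-\vh\|^2$ carry-over.

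The main obstacle is purely bookkeeping: one must pick the Young-split coefficients so that each residual quadratic endpoint is \emph{exactly} absorbed by its negative telescoped counterpart (the $\tfrac{1}{4\eta}$-vs-$\tfrac{1}{2\eta}$ choice at the upper endpoint and the matched $\tfrac{1}{2\eta}$-vs-$\tfrac{1}{2\eta}$ choice at the lower endpoint for $k\ge 2$) and align the index ranges of the accumulated $\eta\|\vg_n-\vh_n\|^2$ sums with those stated in the lemma. With these splits the argument is otherwise mechanical, and the conceptual content of the proof is the initialization inequality $\langle \vh_1,\vDelta_1-\vu^1\rangle\le 0$, which is what saves the first episode from paying the extra $2D^2/\eta$ and the extra $\tfrac{\eta}{2}\|\vg_{(k-1)T}-\vh_{(k-1)T}\|^2$ that later episodes incur.
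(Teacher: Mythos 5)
Your proposal is correct and follows essentially the same route as the paper's proof: iterate Lemma~\ref{lem:one_step_regret} across the episode, telescope the quadratic distances and optimistic corrections, close the upper bilinear endpoint with a Young split absorbed by the surviving $-\tfrac{1}{2\eta}\|\vDelta_{kT}-\vu^k\|^2$ (and fold the resulting $\eta\|\vg_{kT}-\vh_{kT}\|^2$ into the accumulated sum), close the lower endpoint with another Young split producing the isolated $\tfrac{\eta}{2}\|\vg_{(k-1)T}-\vh_{(k-1)T}\|^2$ and the second $2D^2/\eta$, drop the $-\tfrac{1}{4\eta}\|\vDelta_{n+1}-\vDelta_n\|^2$ terms, and for $k=1$ use $\langle\vh_1,\vDelta_1-\vu^1\rangle\le 0$ so that $\langle\vg_1,\vDelta_1-\vu^1\rangle$ cancels exactly against the telescoped lower boundary. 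Your indexing of the lower telescoped endpoint as $\vDelta_{(k-1)T}$ is in fact cleaner than the paper's notation, but the final bounds are identical since all the relevant iterates have norm at most $D$.
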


\begin{proof}
To prove \eqref{eq:k=1_c3}, we set $\vu = \vu^{1}$ and sum the inequality in Lemma~\ref{lem:one_step_regret} from $n=1$ to $n = T-1$: 
\begin{align*}
    \sum_{n=2}^T \langle \vg_n, \vDelta_n - \vu^{1}\rangle &\leq \frac{1}{2\eta}\|\vDelta_1-\vu^1\|^2   - \frac{1}{2\eta}\|\vDelta_{T}-\vu^1\|^2 +  \langle \vg_{T} - \vh_{T}, \vDelta_{T} - \vu^1 \rangle -  \langle \vg_{1} - \vh_{1}, \vDelta_{1} - \vu^{1} \rangle \\
    &\phantom{{}={}}+ \sum_{n=1}^{T-1} \left( \eta\|\vg_n - \vh_n\|^2 - \frac{1}{4\eta}\|\vDelta_{n+1} - \vDelta_n\|^2\right) + 2DT \delta. 
\end{align*} 
Moreover, recall that $\vh_1 = \nabla f(\vx_0)$ and $\vDelta_1 = -\frac{D\nabla f(\vx_0)}{\|\nabla f(\vx_0)\|}$, 
which satisfies $\vDelta_1 = \argmin_{\|\vDelta\| \leq D} \langle \vh_1, \vDelta \rangle$. Thus, this implies that $\langle \vh_1, \vDelta_1 - \vu^1\rangle \leq 0$. Furthermore, using the Cauchy-Schwarz and Young's inequalities,  
we can show
\begin{equation*}
    \langle \vg_{T} - \vh_{T}, \vDelta_{T} - \vu^1 \rangle \leq \|\vg_{T} - \vh_{T}\|\|\vDelta_{T} - \vu^1 \|
     \leq \frac{\eta}{2}\|\vg_T-\vh_T\|^2 + \frac{1}{2\eta}\|\vDelta_{T} - \vu^1 \|^2.
\end{equation*}
Combining all the inequalities, we obtain that 
\begin{equation*}
    \sum_{n=1}^T \langle \vg_n, \vDelta_n - \vu^{1}\rangle \leq \frac{1}{2\eta}\|\vDelta_1-\vu^1\|^2  +  \frac{\eta}{2}\|\vg_T-\vh_T\|^2 + \sum_{n=1}^{T-1} \left( \eta\|\vg_n - \vh_n\|^2 - \frac{1}{4\eta}\|\vDelta_{n+1} - \vDelta_n\|^2\right) + {2DT \delta}. 
\end{equation*}
Dropping the negative term $- \frac{1}{4\eta}\|\vDelta_{n+1} - \vDelta_n\|^2$ and using $\frac{\eta}{2}\|\vg_T-\vh_T\|^2 \leq \eta\|\vg_T-\vh_T\|^2 $ lead to: 
\begin{equation*}
    \sum_{n=1}^T \langle \vg_n, \vDelta_n - \vu^{1}\rangle \leq \frac{1}{2\eta}\|\vDelta_1-\vu^1\|^2  +   \sum_{n=1}^{T} \eta\|\vg_n - \vh_n\|^2 + 2DT \delta. 
\end{equation*}
Finally, since we have $\|\vDelta_1\| = D$ and $\|\vu^{1}\| \leq D$, this leads to \eqref{eq:k=1_c3}.

Now we move to \eqref{eq:k>=2_c3}. To simplify the notation, we let $\vDelta_{t}^k :=\vDelta_{(k-1)T+t}$, $\vg_t^k : =  \vg_{(k-1)T+t}$ and $\vh_t^k = \vh_{(k-1)T+t}$. Then by setting $\vu = \vu^{k}$ and summing the inequality in Lemma~\ref{lem:one_step_regret} from $n=(k-1)T$ to $n = kT-1$, we obtain:
\begin{align*}
    \sum_{t=1}^{T} \langle \vg_t^k, \vDelta_t^k - \vu^{k}\rangle &\leq \frac{\|\vDelta^k_1-\vu^k\|^2}{2\eta}   - \frac{\|\vDelta^k_{T}-\vu^k\|^2}{2\eta} +  \langle \vg^k_{T} - \vh^k_{T}, \vDelta^k_{T} - \vu^k \rangle -  \langle \vg^{k-1}_{T} - \vh^{k-1}_{T}, \vDelta_{T-1}^k - \vu^{k} \rangle \\
    &\phantom{{}={}}+ \sum_{n=(k-1)T}^{kT-1} \left( \eta\|\vg_n - \vh_n\|^2 - \frac{1}{4\eta}\|\vDelta_{n+1} - \vDelta_n\|^2\right) + {2DT \delta}.
\end{align*}
Following a similar argument as in the proof of \eqref{eq:k=1_c3}, we have the following inequalities:

\begin{itemize}
    \item $ \langle \vg^k_{T} - \vh^k_{T}, \vDelta^k_{T} - \vu^k \rangle \leq 
    \frac{\eta}{2} \left\| \vg^k_{T} - \vh^k_{T} \right\|^2 + \frac{1}{2\eta} \| \vDelta^k_T - \vu^k \|^2$.
    \item $ \langle \vg^{k-1}_{T} - \vh^{k-1}_{T}, \vDelta^k_{T-1} - \vu^k \rangle \leq 
    \frac{\eta}{2} \left\| \vg^{k-1}_{T} - \vh^{k-1}_{T} \right\|^2 + \frac{1}{2\eta} \| \vDelta^k_{T-1} - \vu^k \|^2$. 
    \item $  \sum_{n=(k-1)T}^{kT-1} \left( \eta\|\vg_n - \vh_n\|^2 - \frac{1}{4\eta}\|\vDelta_{n+1} - \vDelta_n\|^2\right) \leq  \sum_{n=(k-1)T}^{kT-1}  \eta\|\vg_n - \vh_n\|^2$.  
\end{itemize}
 Thus, combining all the inequalities above, we obtain:
\begin{align*}
    \sum_{t=1}^{T} \langle \vg_t^k, \vDelta_t^k - \vu^{k}\rangle &\leq \frac{1}{2\eta}\|\vDelta^k_1-\vu^k\|^2   + \frac{\eta}{2} \left\| \vg^k_{T} - \vh^k_{T} \right\|^2 + \frac{\eta}{2} \left\| \vg^{k-1}_{T} - \vh^{k-1}_{T} \right\|^2 + \frac{1}{2\eta} \| \vDelta^k_{T-1} - \vu^k \|^2\\
    & \phantom{{}={}}+  \sum_{n=(k-1)T}^{kT-1} \eta\|\vg_n - \vh_n\|^2 + {2DT \delta} \\
    & \leq \frac{\|\vDelta^k_1-\vu^k\|^2}{2\eta} + \frac{\eta\left\| \vg^{k-1}_{T} - \vh^{k-1}_{T} \right\|^2}{2}  + \frac{\| \vDelta^k_{T-1} - \vu^k \|^2}{2\eta} +  \sum_{n=(k-1)T}^{kT} \eta\|\vg_n - \vh_n\|^2 \\
    &\phantom{{}={}}+ {2DT \delta}. 
\end{align*}
Finally, since we have $\|\vDelta_1^k\| \leq  D$, $ \|\vu\| \leq D$, and $\|\vDelta_{T-1}^k\| \leq  D$, we get
$\frac{\|\vDelta^k_1-\vu^k\|^2}{2\eta} + \frac{\| \vDelta^k_{T-1} - \vu^k \|^2}{2\eta} \leq \frac{4D^2}{\eta}$. This completes the proof of~\eqref{eq:k>=2_c3}.
\end{proof}

Now we are ready to prove Lemma~\ref{lem:learning_regret}. 
\begin{proof}[Proof of Lemma~\ref{lem:learning_regret}]
Summing the inequality in \eqref{eq:k=1_c3} and the inequality in \eqref{eq:k>=2_c3} for $k = 2,3,\dots,T$ in Lemma~\ref{lem:episode_regret}, 
we have:
\begin{align*}
    \Reg_T(\vu^1,\dots,\vu^K) &= \sum_{k=1}^K \sum_{n=1}^{T} \langle \vg^k_n, \vDelta^k_n - \vu^{k}\rangle \\
     & \leq \frac{4KD^2}{\eta} + \sum_{k=1}^K \frac{\eta}{2} \left\| \vg^{k-1}_{T} - \vh^{k-1}_{T} \right\|^2 +  \sum_{n=1}^{kT} \eta\|\vg_n - \vh_n\|^2 + {2DKT \delta} \\
    & \leq \frac{4KD^2}{\eta} + \frac{3\eta}{2} \sum_{n=1}^{kT} \|\vg_n - \vh_n\|^2 + {2DKT \delta} \\
    & = \frac{4KD^2}{\eta} + \frac{3\eta}{2} \sum_{n=1}^{kT} \left\| \vy_n - \mB_n \vs_n \right\|^2 + {2DKT \delta}.
\end{align*}
This completes the proof. 
\end{proof}

\section{Complete Version and Proof of Theorem \texorpdfstring{\ref{thm:convergence_rate}}{4.1}}

In this section, we first present the complete version of Theorem~\ref{thm:convergence_rate}, where we provide the precise values of the algorithm parameters \( D \), \( \eta \), \( T \), and \( \delta \) (including absolute constants). We also report the complete version of the final upper bound that includes all terms with absolute constants, and even the non-dominant terms are reported for completeness. Then, we present the proof of this theorem.
 
\begin{theorem}\label{thm:convergence_rate_formal}
    Suppose Assumptions \ref{asm:L1} and \ref{asm:L2} hold. If we run  Algorithm~\ref{alg:conversion} with parameters
    $D = \left( \frac{f(\vx_0) - f^*}{52 d^{\frac{2}{5}} L_1^{\frac{2}{5}} L_2^{\frac{3}{5}} M} \right)^{\frac{5}{13}}, \eta = \left( \frac{1} {24dL_1L_2^{\frac{2}{3}}D^{\frac{2}{3}}} \right)^{\frac{3}{5}},  T = \frac{3}{(DL_2\eta)^{\frac{1}{3}}}, \text{and } \delta = \frac{D}{\eta T}$,  we get:
    \begin{align*}
        \frac{1}{K}\sum_{k=1}^K \|\nabla f(\bar{\vw}^k)\| &\leq
        2 (f(\vx_0) - f^*)^{\frac{8}{13}} (52 L_1^{\frac{2}{5}} L_2^{\frac{3}{5}})^{\frac{5}{13}} \frac{d^{\frac{2}{13}}}{M^{\frac{8}{13}}}
        + \frac{L_2(f(\vx_0) - f^*)}{416dL_1M}
        +\frac{7 L_1^{\frac{17}{13}} (f(\vx_0) - f^*)^{\frac{3}{13}}}{L_2^{\frac{7}{13}}} \frac{d^{\frac{4}{13}}}{M^{\frac{16}{13}}}\\
        & \phantom{{}={}} + \frac{L_2^{\frac{7}{13}}}{48} \left( \frac{f(\vx_0) - f^*}{52 d^{\frac{2}{5}} L_1^{\frac{2}{5}} M} \right)^{\frac{10}{13}}. 
\end{align*}

\end{theorem}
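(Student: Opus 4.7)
The plan is to chain together the results already established in the paper and then perform careful parameter tuning. The starting point is Proposition~\ref{pr:average_gradient}, which bounds $\frac{1}{K}\sum_{k=1}^K \|\nabla f(\bar{\vw}^k)\|$ in terms of the $K$-shifting regret $\Reg_T(\vu^1,\dots,\vu^K)$, the descent term $\frac{f(\vx_0)-f^*}{DKT}$, and the discretization errors $\frac{L_2 D^2}{48} + \frac{L_2 T^2 D^2}{2}$. First, I would apply Lemma~\ref{lem:learning_regret} to replace the regret by the quantities $\frac{4KD^2}{\eta} + \frac{3\eta}{2} \sum_{n=1}^{KT}\|\vy_n - \mB_n \vs_n\|^2 + 2DKT\delta$. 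This reduces the task to controlling the cumulative prediction loss $\sum_{n=1}^{KT}\ell_n(\mB_n)$ of the matrix-valued online learner.

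Next, I would invoke the two lemmas from the proof sketch (Lemma~\ref{lem:Dyn_regret_bound} and Lemma~\ref{lem:H_n_loss}) with the comparator sequence $\mH_n = \nabla^2 f(\vz_n)$. Lemma~\ref{lem:Dyn_regret_bound} (applied with $\rho = 1/(16 D^2)$) bounds $\sum_n \ell_n(\mB_n)$ by $16 D^2 \|\mW_1 - \mH_1\|_F^2 + 2\sum_n \ell_n(\mH_n) + 64 L_1 D^2 \sqrt{d}\sum_n \|\mH_{n+1}-\mH_n\|_F$. Lemma~\ref{lem:H_n_loss} then gives $\ell_n(\mH_n) \leq \frac{L_2^2 D^4}{4}$ and $\|\mH_{n+1}-\mH_n\|_F \leq 2 L_2 \sqrt{d} D$. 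Combined with $\|\mW_1 - \mH_1\|_F^2 \leq 4 d L_1^2$ (since both matrices lie in the operator-norm ball of radius $L_1$ up to a factor, and the Frobenius norm is at most $\sqrt{d}$ times the operator norm), this yields
\begin{equation*}
\sum_{n=1}^{KT} \ell_n(\mB_n) \leq \tfrac{1}{2} L_2^2 D^4 KT + 64 d L_1^2 D^2 + 128 d L_1 L_2 D^3 KT.
\end{equation*}

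Plugging this back, choosing $\delta = D/(\eta T)$ to make the subproblem-inexactness term merge with the $4KD^2/\eta$ term (giving a constant $6KD^2/\eta$), and using $M = KT$, I would obtain an upper bound
\begin{equation*}
\frac{1}{K}\sum_{k=1}^K \|\nabla f(\bar{\vw}^k)\| \leq \frac{f(\vx_0)-f^*}{DM} + \frac{6D}{T\eta} + \frac{3\eta}{2}\Bigl[\tfrac{L_2^2 D^3}{2} + \tfrac{64 d L_1^2 D}{M} + 128 d L_1 L_2 D^2\Bigr] + \frac{L_2 D^2}{48} + \frac{L_2 D^2 T^2}{2}.
\end{equation*}

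The last and most delicate step, which I expect to be the main obstacle, is the optimization of the four free parameters $D$, $\eta$, $T$, $M$ (with $K = M/T$). The strategy is to match the orders of the three dominant terms: the descent term $\frac{f(\vx_0)-f^*}{DM}$, the curvature-induced term $\eta d L_1 L_2 D^2$, and the discretization term $L_2 D^2 T^2$, while picking $T$ via $L_2 D^2 T^2 \asymp D/(T\eta)$, i.e.\ $T \asymp (D L_2 \eta)^{-1/3}$. Substituting gives $\eta \asymp (d L_1 L_2^{2/3} D^{2/3})^{-3/5}$, and then $D \asymp ((f(\vx_0)-f^*) / (d^{2/5} L_1^{2/5} L_2^{3/5} M))^{5/13}$. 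Substituting these back yields the leading-order $\mathcal{O}(d^{2/13}/M^{8/13})$ rate, while checking that the remaining (non-dominant) terms match the residual expressions in Theorem~\ref{thm:convergence_rate_formal}. This is the bookkeeping-heavy part: one must verify that the chosen exponents make each dominant term of the same order and that the constants line up with those stated in the formal theorem.
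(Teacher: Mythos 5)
Your proposal follows the paper's own proof essentially step for step: Proposition~\ref{pr:average_gradient} $\to$ Lemma~\ref{lem:learning_regret} $\to$ Lemmas~\ref{lem:Dyn_regret_bound}--\ref{lem:H_n_loss} with comparator $\mH_n=\nabla^2 f(\vz_n)$, then $\delta=D/(\eta T)$ and the three-stage balancing of $T$, $\eta$, $D$. The intermediate inequality you display and the asymptotic choices of $T$, $\eta$, $D$ all match the paper, so the only thing left is the arithmetic of constants that you (correctly) flag as routine.
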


\subsection{Proof of Lemma \texorpdfstring{\ref{lem:Dyn_regret_bound}}{4.2}}

To follow the approach outlined in the proof sketch, we first need to upper bound the cumulative loss \(\sum_{n=1}^{KT} \ell_n(\mB_n)\) associated with our projection-free online learning method for updating the Hessian approximation. This is the primary claim of Lemma~\ref{lem:Dyn_regret_bound}. To achieve this, we begin by establishing two intermediate lemmas, i.e., Lemma~\ref{lem:surrogate_regret} and Lemma~\ref{lem:grad_loss}. Before stating these results, recall that 
\begin{equation}\label{eq:def_ln}
    \ell_n (\mB_n) =  \left\| \vy_n -  \mB_n \vs_n \right\|^2 \quad \text{and} \quad \mathcal{Z} = \{\mB\in \sS^d: \|\mB\|_{\op} \leq L_1\}.
\end{equation} 
{In addition, $\{\mW_n\}_{n\geq 0}$ is the auxiliary sequence used in Subroutine~\ref{alg:hessian_approx},  and $\gamma_n$ and $\mS_n$ denote the output of $\mathsf{SEP}(\mW_n)$. Moreover, if $\gamma_n \leq 1$, then $\mB_n \leftarrow \mW_n$ and $\tilde{\mG}_n \leftarrow \nabla \ell_n(\mB_n)$. Otherwise, if $\gamma_n > 1$, then $\mB_n \leftarrow \mW_n/\gamma_n$ and $\tilde{\mG}_n \leftarrow \nabla \ell_n(\mB_n)+\max\{0,-\langle \nabla \ell_n(\mB_n), \mB_n \rangle\} \mS_n$. }

\begin{lemma}\label{lem:surrogate_regret}
Let $\{\mB_n\}$ be generated by Subroutine~\ref{alg:hessian_approx}. Then we have $\|\mB_n\|_{\op} \leq 2L_1$. Moreover, for any $\mB$ such that $\|\mB\|_{\op} \leq L_1$, we have  
    \begin{align}
    \langle \mG_n, {\mB}_n-{\mB} \rangle \leq \langle \tilde{\mG}_n, {\mW}_n-{\mB} \rangle &\leq \frac{1}{2\rho}\|\mW_n-{\mB}\|_F^2-\frac{1}{2\rho}\|\mW_{n+1}-{\mB}\|_F^2+\frac{\rho}{2}\|\tilde{\mG}_n\|_F^2, \label{eq:linearized_loss_matrix} \\%\\
\|\tilde{\mG}_n\|_F &\leq \|\mG_n\|_F + |\langle \mG_n, {\mB}_n\rangle|\|\mS_n\|_F \leq 2\|\mG_n\|_*. \label{eq:surrogate_gradient_bound}
  \end{align}
\end{lemma}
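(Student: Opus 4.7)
The plan is to case-split on the value of $\gamma_n$ returned by the $\mathsf{SEP}$ oracle (Definition~\ref{def:extevec}) and verify each of the three claims separately. The bound $\|\mB_n\|_{\op} \leq 2L_1$ is immediate from the $\mathsf{SEP}$ guarantees: in Case~I ($\gamma_n\leq 1$) we have $\mB_n = \mW_n$ with $\|\mW_n\|_{\op} \leq 2L_1$, while in Case~II ($\gamma_n>1$) we have $\mB_n = \mW_n/\gamma_n$ with $\|\mW_n/\gamma_n\|_{\op} \leq 2L_1$.

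For the first inequality in~\eqref{eq:linearized_loss_matrix}, Case~I is trivial because $\mB_n=\mW_n$ and $\tilde{\mG}_n=\mG_n$ collapse the two sides to the same expression. The substantive step is Case~II. I would rewrite $\mW_n = \gamma_n \mB_n$ and expand
\begin{equation*}
    \langle \tilde{\mG}_n, \mW_n - \mB\rangle = \langle \mG_n, \mB_n - \mB\rangle + (\gamma_n - 1)\langle \mG_n, \mB_n\rangle + \max\{0, -\langle \mG_n, \mB_n\rangle\}\langle \mS_n, \mW_n - \mB\rangle.
\end{equation*}
Combined with $\langle \mS_n, \mW_n - \mB\rangle \geq \gamma_n - 1 \geq 0$ from $\mathsf{SEP}$, a short case analysis on the sign of $\langle \mG_n, \mB_n\rangle$ shows that the last two terms jointly give a nonnegative contribution, yielding $\langle \mG_n, \mB_n - \mB\rangle \leq \langle \tilde{\mG}_n, \mW_n - \mB\rangle$ as required.

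The second inequality in~\eqref{eq:linearized_loss_matrix} will come from the standard one-step analysis of projected online gradient descent. Since $\{\mB \in \sS^d: \|\mB\|_{\op} \leq L_1\} \subseteq \mathcal{B}_{\sqrt{d}L_1}(0)$, the comparator $\mB$ lies in the feasible set of the auxiliary problem, so by nonexpansiveness of the Euclidean projection we have $\|\mW_{n+1} - \mB\|_F^2 \leq \|\mW_n - \rho\tilde{\mG}_n - \mB\|_F^2$; expanding the square and rearranging produces the desired three-term bound.

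For~\eqref{eq:surrogate_gradient_bound}, the first inequality is just the triangle inequality applied to the definition of $\tilde{\mG}_n$ (trivially in Case~I, directly in Case~II). The second inequality will use the matrix H\"older bound $|\langle \mG_n, \mB_n\rangle| \leq \|\mG_n\|_* \|\mB_n\|_{\op}$ together with the already-established $\|\mB_n\|_{\op} \leq 2L_1$ and the $\mathsf{SEP}$ bound $\|\mS_n\|_F \leq 1/L_1$, which yields $|\langle \mG_n, \mB_n\rangle|\|\mS_n\|_F \leq 2\|\mG_n\|_*$; combining with $\|\mG_n\|_F \leq \|\mG_n\|_*$ closes the argument. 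The only delicate step throughout is the Case~II expansion above; once that identity is in place, the remaining claims are routine convex-analysis inequalities.
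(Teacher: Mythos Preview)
Your approach is essentially identical to the paper's: the same case split on $\gamma_n$, the same expansion $\langle \tilde{\mG}_n,\mW_n-\mB\rangle = \langle \mG_n,\mB_n-\mB\rangle + (\gamma_n-1)\langle \mG_n,\mB_n\rangle + \max\{0,-\langle \mG_n,\mB_n\rangle\}\langle \mS_n,\mW_n-\mB\rangle$ in Case~II, and the same projected-OGD one-step argument for the second inequality in~\eqref{eq:linearized_loss_matrix}.

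There is one arithmetic slip in your treatment of the final inequality in~\eqref{eq:surrogate_gradient_bound}. Your bounds $|\langle \mG_n,\mB_n\rangle|\,\|\mS_n\|_F \le \|\mG_n\|_*\cdot 2L_1\cdot \tfrac{1}{L_1} = 2\|\mG_n\|_*$ and $\|\mG_n\|_F \le \|\mG_n\|_*$ sum to $3\|\mG_n\|_*$, not $2\|\mG_n\|_*$; so the argument as written does not actually ``close'' to the stated constant. Note that the paper's own proof does not supply a justification for this last step either (it only establishes the triangle-inequality part), and from Definition~\ref{def:extevec} alone the constant $2$ does not appear to be attainable without an extra ingredient. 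The discrepancy is harmless for the downstream analysis: in the proof of Lemma~\ref{lem:Dyn_regret_bound} the bound $\|\tilde{\mG}_n\|_F \le c\|\mG_n\|_*$ is used only to set $\rho = 1/(4c^2 D^2)$, so replacing $c=2$ by $c=3$ merely changes absolute constants. Still, as stated your proposal does not deliver the claimed $2\|\mG_n\|_*$.
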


\begin{proof}
We consider two cases depending on the value of $\gamma_n$ returned by $\mathsf{SEP}(\mW_n)$. 
\begin{enumerate}[(a)]
    \item In the first case where $\gamma_n \leq 1$, we have $\mB_n = \mW_n$ and $\tilde{\mG}_n = \mG_n$. Moreover, it holds that $\|\mB_n\|_{\op} = \|\mW_n\|_{\op} \leq 2L_1$ by Definition~\ref{def:extevec}. Thus, $\langle \mG_n, {\mB}_n-{\mB} \rangle = \langle \tilde{\mG}_n, {\mW}_n-{\mB} \rangle$ and $\|\tilde{\mG}_n\|_F = \|\mG_n\|_F$. 
    \item In the second case where $\gamma_n > 1$, we have $\mB_n = \mW_n/\gamma_n$ and $\tilde{\mG}_n = \mG_n +\max\{0,-\langle \mG_n, \mB_n \rangle\} \mS_n$. Moreover, by Definition~\ref{def:extevec} it holds that $\|\mB_n\|_{\op} = \|\mW_n/\gamma_n\|_{\op} \leq 2 L_1$ and $\langle \mS_n,\mW_n -{\mB} \rangle \geq \gamma_n -1$ for any ${\mB}$ such that $\|{\mB}\|_{\op} \leq L_1$. Therefore, 
    \begin{align*}
        \langle \tilde{\mG}_n, \mW_n - \mB \rangle &= \langle \mG_n, \mW_n - \mB \rangle +   \max\{0,-\langle \mG_n, \mB_n \rangle\} \langle \mS_n, \mW_n - \mB\rangle \\
        &= \langle \mG_n, \mB_n - \mB \rangle + (\gamma_n-1)\langle \mG_n, \mB_n \rangle +   \max\{0,-\langle \mG_n, \mB_n \rangle\} \langle \mS_n, \mW_n - \mB\rangle \\
        &\geq \langle \mG_n, \mW_n - \mB \rangle, 
    \end{align*}
    where we used the fact that $\langle \mS_n,\mW_n -{\mB} \rangle \geq \gamma_n -1$ in the last inequality. Furthermore, it follows from the triangle inequality that $\|\tilde{\mG}_n\|_F \leq \|\mG_n\|_F + |\langle \mG_n, {\mB}_n\rangle|\|\mS_n\|_F$, which proves~\eqref{eq:surrogate_gradient_bound}. 
    
\end{enumerate}
Finally, we prove the last inequality in \eqref{eq:linearized_loss_matrix}, which follows from the standard online gradient descent analysis. By using the property of Euclidean projection, we have 
\begin{equation*}
    \langle \mW_{t+1} - \mW_n + \rho \tilde{\mG}_n, \mB - \mW_{t+1} \rangle \geq 0.  
\end{equation*}
Hence, this leads to 
\begin{align*}
    \langle \tilde{\mG}_n, \mW_n - \mB \rangle &\leq \frac{1}{\rho} \langle \mW_{t+1} - \mW_n, \mB- \mW_{t+1}\rangle + \langle \tilde{\mG}_n, \mW_n - \mW_{t+1}\rangle \\ 
    &\leq \frac{1}{2\rho} \|\mW_n - \mB\|_F^2 - \frac{1}{2\rho}\|\mW_{t+1} - \mB\|_F^2 - \frac{1}{2\rho} \|\mW_n - \mW_{t+1}\|_F^2 + \langle \tilde{\mG}_n, \mW_n - \mW_{t+1}\rangle \\
    &\leq \frac{1}{2\rho} \|\mW_n - \mB\|_F^2 - \frac{1}{2\rho}\|\mW_{t+1} - \mB\|_F^2 + \frac{\rho}{2} \|\tilde{\mG}_n\|_F^2. 
\end{align*}
This completes the proof. 
\end{proof}

We first present the following lemma showing a self-bounding property of the loss function $\ell_n$. 
\begin{lemma}\label{lem:grad_loss}
    Recall the definition of $\ell_n:\sS^d \rightarrow \reals$ in \eqref{eq:def_ln}. For any $\mB \in \sS^d$, we have $\| \nabla \ell_n (\mB) \|_* \leq 2D \sqrt{\ell_n(\mB)}$. 
\end{lemma}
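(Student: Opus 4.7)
The plan is a direct computation. First, I would identify the gradient $\nabla \ell_n(\mB)$ explicitly. Since $\ell_n$ is defined on the space $\sS^d$ of symmetric matrices, I would take a directional derivative along an arbitrary symmetric $\mH \in \sS^d$: expanding
\begin{equation*}
\ell_n(\mB + t\mH) = \|\vy_n - \mB\vs_n\|^2 - 2t \langle \vy_n - \mB\vs_n, \mH \vs_n\rangle + t^2\|\mH\vs_n\|^2,
\end{equation*}
one sees $D\ell_n(\mB)[\mH] = -2\Tr\bigl(\mH (\vy_n - \mB\vs_n)\vs_n^\top\bigr)$. Using the symmetry of $\mH$ to symmetrize the rank-one matrix $(\vy_n - \mB\vs_n)\vs_n^\top$, I would then identify the symmetric gradient as
\begin{equation*}
\nabla \ell_n(\mB) = -\bigl[(\vy_n - \mB\vs_n)\vs_n^\top + \vs_n(\vy_n - \mB\vs_n)^\top\bigr],
\end{equation*}
which is a symmetric matrix of rank at most $2$.

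Second, I would pass to the dual norm $\|\cdot\|_*$ (the nuclear norm, dual to the operator norm used throughout the paper). Writing $\vr := \vy_n - \mB\vs_n$ for brevity, the triangle inequality gives
\begin{equation*}
\|\nabla \ell_n(\mB)\|_* \leq \|\vr \vs_n^\top\|_* + \|\vs_n \vr^\top\|_* = 2\|\vr\|\,\|\vs_n\|,
\end{equation*}
since the nuclear norm of a rank-one matrix $\vu \vv^\top$ equals $\|\vu\|\|\vv\|$.

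Third, I would bound $\|\vs_n\|$ using the construction in Algorithm~\ref{alg:conversion}. Since $\vs_n = \tfrac{1}{2}(\vDelta_{n+1} - \vDelta_n)$ and both iterates satisfy $\|\vDelta_{n+1}\|, \|\vDelta_n\| \leq D$ (enforced by \textsf{TRSolver} and the initialization), the triangle inequality yields $\|\vs_n\| \leq D$. Combining with the definition $\sqrt{\ell_n(\mB)} = \|\vr\|$ gives
\begin{equation*}
\|\nabla \ell_n(\mB)\|_* \leq 2\|\vr\|\,\|\vs_n\| \leq 2D\sqrt{\ell_n(\mB)},
\end{equation*}
as claimed. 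There is no substantive obstacle here; the only point requiring care is making sure the gradient is correctly interpreted as a symmetric matrix (so that the rank-one outer product is symmetrized before taking its dual norm), since overlooking the symmetrization would give a bound that is off by a constant factor but still of the same form.
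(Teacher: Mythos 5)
Your proof is correct and follows essentially the same route as the paper's: compute the symmetrized rank-two gradient, bound its nuclear norm by $2\|\vy_n - \mB\vs_n\|\,\|\vs_n\|$ via the triangle inequality, and then use $\|\vs_n\| = \tfrac{1}{2}\|\vDelta_{n+1}-\vDelta_n\| \le D$. The extra care you take to derive the symmetric gradient from the directional derivative is a nice clarification but not a different argument.
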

\begin{proof}
    By direct calculation, we have 
    $\nabla \ell_n (\mB) = - \left( \vy_n - \mB \vs_n \right) \vs_n^\top - \vs_n \left( \vy_n - \mB \vs_n \right)^\top$.
    Taking the nuclear norm and using the triangle inequality, we have 
    \begin{align*}
        \| \nabla \ell_n (\mB) \|_* & =  \left\|  \left( \vy_n - \mB \vs_n \right) \vs_n^\top + \vs_n \left( \vy_n - \mB \vs_n \right)^\top   \right\|_* \\
        & \leq \left\|  \left( \vy_n - \mB \vs_n \right) \vs_n^\top \right\|_* + \left\| \vs_n \left( \vy_n - \mB \vs_n \right)^\top \right\|_* \\ &\leq  2 \left\| \vy_n - \mB \vs_n \right\| \| \vs_n \|  
        \leq 2\|\vs_n\| \sqrt{\ell_n(\mB)}. 
    \end{align*}
    Finally, recall that $\vs_n = \frac{1}{2}(\vDelta_{n+1}-\vDelta_n)$. Since $\|\vDelta_n\| \leq D$ and $\|\vDelta_{n+1}\| \leq D$, we have $\|\vs_n\| \leq D$. This completes the proof.
\end{proof}

Now we move to the proof of Lemma~\ref{lem:Dyn_regret_bound}. 
\begin{proof}[Proof of Lemma~\ref{lem:Dyn_regret_bound}]
Since $\mH_n \in \mathcal{Z}$, it follows from~\eqref{eq:linearized_loss_matrix} in Lemma~\ref{lem:surrogate_regret} that:
\begin{equation}\label{eq:one_step_regret}
    \begin{aligned}
        \langle \mG_n, {\mB}_n-{\mH_n} \rangle &\leq \frac{1}{2\rho}\|\mW_n-{\mH_n}\|_F^2-\frac{1}{2\rho}\|\mW_{n+1}-{\mH_n}\|_F^2+\frac{\rho}{2}\|\tilde{\mG}_n\|_F^2 \\
        &\leq \frac{1}{2\rho}\|\mW_n-{\mH_n}\|_F^2-\frac{1}{2\rho}\|\mW_{n+1}-{\mH_n}\|_F^2+2\rho \|\mG_n\|^2_*, %
      \end{aligned}
\end{equation}
  where we used \eqref{eq:surrogate_gradient_bound} in the last inequality. 
Recall that  $\mG_n = \nabla \ell_n (\mB_n)$. Since $\ell_n$ is convex, we have $\ell_n(\mB_n) - \ell_n(\mH_n) \leq \langle \mG_n, {\mB}_n-{\mH_n} \rangle$. 
Therefore, applying this inequality and summing \eqref{eq:one_step_regret} from $n=1$ to $n=KT$, we get:
\begin{align*}
    &\sum_{n=1}^{KT} (\ell_n(\mB_n) - \ell_n(\mH_n)) \leq 
    \sum_{n=1}^{KT} 2\rho\|\nabla \ell_n(\mB_n)\|_*^2 +  \sum_{n=1}^{KT} \frac{1}{2 \rho} \left(\|\mW_n - \mH_n\|_F^2 - \|\mW_{n+1} - \mH_n\|_F^2\right).
\end{align*}
Now we choose $\rho = \frac{1}{16D^2} $. Using Lemma  \ref{lem:grad_loss}, the first sum on the right-hand side can be bounded as
\begin{equation*}
    \sum_{n=1}^{KT} 2\rho\|\nabla \ell_n(\mB_n)\|_*^2  \leq \sum_{n=1}^{KT} 8\rho D^2\ell_n(\mB_n)  = \frac{1}{2} \sum_{n=1}^{KT} \ell_n(\mB_n). 
\end{equation*}
Moreover, the second sum on the right-hand side can be bounded by:
\begin{align*}
    & \phantom{{}={}} \sum_{n=1}^{KT} \frac{1}{2 \rho} \left(\|\mW_n - \mH_n\|_F^2 - \|\mW_{n+1} - \mH_n\|_F^2\right) \\
    & = 8D^2 \sum_{n=1}^{KT} \left(\|\mW_n - \mH_n\|_F^2 - \|\mW_{n+1} - \mH_n\|_F^2\right)\\
    & \leq 8D^2  \|\mW_{1} - \mH_1\|_F^2 + 8D^2 \sum_{n=1}^{KT} \left(\|\mW_{n+1} - \mH_{n+1}\|_F^2 - \|\mW_{n+1} - \mH_n\|_F^2\right) 
\end{align*}
Furthermore, note that $\|\mW_{n+1} - \mH_{n+1}\|_F^2 - \|\mW_{n+1} - \mH_n\|_F^2 = ( \|\mW_{n+1} - \mH_{n+1}\|_F - \|\mW_{n+1} - \mH_n\|_F ) ( \|\mW_{n+1} - \mH_{n+1}\|_F + \|\mW_{n+1} - \mH_n\|_F )$. By using the triangle inequality, we have $\|\mW_{n+1} - \mH_{n+1}\|_F - \|\mW_{n+1} - \mH_n\|_F \leq \|\mH_{n+1}-\mH_{n}\|_F$. Also, since $\mH_n,\mH_{n+1} \in \mathcal{Z}$, we have $\|\mH_n\|_{F} \leq \sqrt{d}\|\mH_n\|_{\op} \leq \sqrt{d}L_1$ and $\|\mH_{n+1}\|_{F} \leq \sqrt{d}L_1$. Together with $\mW_{n+1} \in \mathcal{B}_{\sqrt{d}L_1}(0)$, it follows from the triangle inequality that  
\begin{equation*}
    \|\mW_{n+1} - \mH_{n+1}\|_F \leq \|\mW_{n+1} \|_F + \|\mH_{n+1}\|_F \leq 2\sqrt{d}L_1,\quad \|\mW_{n+1} - \mH_n\|_F \leq 2\sqrt{d}L_1. 
\end{equation*}
Hence, we obtain $\|\mW_{n+1} - \mH_{n+1}\|_F^2 - \|\mW_{n+1} - \mH_n\|_F^2 \leq 4L_1\sqrt{d} \| \mH_{n+1}  - \mH_n \|_F$. 
Thus, overall we get:
\begin{equation*}
   \sum_{n=1}^{KT} \ell_n(\mB_n) \leq 16D^2 \|\mW_{1} - \mH_1\|_F^2 +  2 \sum_{n=1}^{KT} \ell_n(\mH_n)   + 64 L_1 D^2 \sqrt{d} \sum_{n=1}^{KT}  \| \mH_{n+1}  - \mH_{n} \|_F.
\end{equation*}
This completes the proof. 
\end{proof}

\subsection{Proof Lemma \texorpdfstring{\ref{lem:H_n_loss}}{4.3}}
Recall that $\vy_n = \vg_{n+1} - \nabla f(\vz_n) = \nabla f(\vw_{n+1}) - \nabla f(\vz_n)$ and $\vs_n = \vw_{n+1} - \vz_n = \frac{1}{2}(\vDelta_{n+1} - \vDelta_n)$. 
By using the definition of $\ell_n$ (see \eqref{eq:def_ln}), we have 
    \begin{align*}
        \ell_n(\mH_n) =   \left\| \vy_n - \nabla^2 f(\vz_{n}) \vs_n \right\|^2 &= \|\nabla f(\vw_{n+1}) - \nabla f(\vz_n) - \nabla^2 f(\vz_{n})( \vw_{n+1} - \vz_n) \|^2\\
        & \leq \left( \frac{L_2}{2} \left\| \vw_{n+1} - \vz_n \right\|^2 \right)^2\\
        & = \frac{L_2^2}{4} \bigl\|\frac{1}{2}(\vDelta_{n+1} - \vDelta_n)\bigr\|^4 \leq \frac{L_2^2}{4}D^4,
    \end{align*}
where the first inequality follows from Assumption~\ref{asm:L2}, and the second inequality holds because $\|\vDelta_n\| \leq D$ and $\|\vDelta_{n+1}\| \leq D$. 
To prove the second inequality, note that by Assumption~\ref{asm:L2} and the relationship between the Frobenius norm and the operator norm:
    \begin{equation*}
        \| \mH_{n+1}  - \mH_n \|_F  = \| \nabla^2f(\vz_{n+1}) - \nabla^2f(\vz_{n})  \|_F \leq  \sqrt{d} \|\nabla^2f(\vz_{n+1}) - \nabla^2f(\vz_{n})\|_{\op} \leq \sqrt{d}L_2 \|\vz_{n+1} - \vz_n\|.
    \end{equation*}
Recall that $\vz_{n} = \vx_n +\frac{1}{2} \vDelta_n$, $\vz_{n+1} = \vx_{n+1} + \frac{1}{2} \vDelta_{n+1}$ and $\vx_{n+1} = \vx_n + \vDelta_{n+1}$. Thus, $\vz_{n+1} - \vz_n = \vx_{n+1} + \frac{1}{2} \vDelta_{n+1} - \vx_n - \frac{1}{2} \vDelta_n = \frac{3}{2}\vDelta_{n+1}- \frac{1}{2} \vDelta_n$. Since  $\| \vDelta_n \| \leq D$ and $\| \vDelta_{n+1} \| \leq D$, we further have $\|\vz_{n+1} - \vz_n\| \leq 2D$. This completes the proof.

\subsection{The Choices of Hyperparameters}

The logic behind selecting the hyperparameters is to balance the dominant terms in the upper bound, thereby optimizing the parameters to achieve the lowest possible convergence bound. 
Recall the following inequality from the last equation in the Proof Sketch: 
\begin{equation*}
        \frac{1}{K}\sum_{k=1}^K \|\nabla f(\bar{\vw}^k)\| \leq \frac{f(\vx_0) - f^*}{DM}
        +\frac{6D}{T\eta} + \frac{3\eta}{2} \left[  \frac{L_2^2 D^3}{2} + \frac{64 dL_1^2D}{M} + 128 d L_1 L_2 D^2  \right]
        + \frac{L_2 D^2}{48} + \frac{L_2D^2T^2 }{2}.
\end{equation*}

To proceed, we first balance the two terms involving $T$ to find an optimal value. This leads to our choice of $T = \frac{3}{(DL_2\eta)^{\frac{1}{3}}}$, and substituting this back into the bound yields:

\begin{equation*}
        \frac{1}{K}\sum_{k=1}^K \|\nabla f(\bar{\vw}^k)\| \leq \frac{f(\vx_0) - f^*}{DM}
        +\frac{13}{2}\frac{D^{\frac{4}{3}} L_2^{\frac{1}{3}}}{\eta^{\frac{2}{3}}} + \frac{3\eta}{2} \left[  \frac{L_2^2 D^3}{2} + \frac{64 dL_1^2D}{M} + 128 d L_1 L_2 D^2  \right]
        + \frac{L_2 D^2}{48}.
\end{equation*}
{Next, we balance the two terms $\frac{13}{2}\frac{D^{\frac{4}{3}} L_2^{\frac{1}{3}}}{\eta^{\frac{2}{3}}}$ and $\frac{3\eta}{2} 128 d L_1 L_2 D^2 $ to obtain $\eta$, as it will be clear later that the second term is the leading term inside the brackets. This yields $\eta = \left( \frac{1} {24dL_1L_2^{\frac{2}{3}}D^{\frac{2}{3}}} \right)^{\frac{3}{5}}$, resulting in the following bound:}
\begin{equation*}
        \frac{1}{K}\sum_{k=1}^K \|\nabla f(\bar{\vw}^k)\| \leq \frac{f(\vx_0) - f^*}{DM}
        + 52(d L_1)^{\frac{2}{5}} D^{\frac{8}{5}} L_2^{\frac{3}{5}}
        + \frac{1}{8} \left( \frac{1}{dL_1} \right)^{\frac{3}{5}} L_2^{\frac{8}{5}} D^{\frac{13}{5}}
        + 15 \frac{d^{\frac{2}{5}} L_1^{\frac{7}{5}} D^{\frac{3}{5}}}{L_2^{\frac{2}{5}} M}
        + \frac{L_2}{48} D^2. 
\end{equation*}
Finally, we balance $D$ between the first and second terms. This results in $D = \left( \frac{f(\vx_0) - f^*}{52 d^{\frac{2}{5}} L_1^{\frac{2}{5}} L_2^{\frac{3}{5}} M} \right)^{\frac{5}{13}}$, leading to:
\begin{align*}
        \frac{1}{K}\sum_{k=1}^K \|\nabla f(\bar{\vw}^k)\| &\leq
        2 (f(\vx_0) - f^*)^{\frac{8}{13}} (52 L_1^{\frac{2}{5}} L_2^{\frac{3}{5}})^{\frac{5}{13}} \frac{d^{\frac{2}{13}}}{M^{\frac{8}{13}}}
        + \frac{L_2(f(\vx_0) - f^*)}{416dL_1M}
        +\frac{7 L_1^{\frac{17}{13}} (f(\vx_0) - f^*)^{\frac{3}{13}}}{L_2^{\frac{7}{13}}} \frac{d^{\frac{4}{13}}}{M^{\frac{16}{13}}}\\
        & + \frac{L_2^{\frac{7}{13}}}{48} \left( \frac{f(\vx_0) - f^*}{52 d^{\frac{2}{5}} L_1^{\frac{2}{5}} M} \right)^{\frac{10}{13}}.
\end{align*}
To summarize,  the hyperparameters in our algorithm are chosen as follows (ignoring absolute constants): 

\begin{equation}\label{eq:parameters_cr_exp}
    D = \left( \frac{f(\vx_0) - f^*}{d^{\frac{2}{5}} L_1^{\frac{2}{5}} L_2^{\frac{3}{5}} M} \right)^{\frac{5}{13}},
    \quad
    \quad
    \eta = \frac{M^{\frac{2}{13}}}{(f(\vx_0)-f^*)^{\frac{2}{13}} d^{\frac{7}{13}} L_1^{\frac{7}{13}} L_2^{\frac{4}{13}}},
    \quad
    \quad
    T = \frac{(d L_1)^{\frac{3}{13}} M^{\frac{1}{13}}}{(f(\vx_0-f^*))^{\frac{1}{13}} L_2^{\frac{2}{13}}}. 
\end{equation}

\section{Implementation of \texorpdfstring{$\mathsf{TRSolver}$}{TRSolver}}
\label{appen:trsolver}

In this section, we describe the implementation details of the $\mathsf{TRSolver}$ oracle defined in Definition~\ref{def:TRSolver}, which approximately solves the trust-region subproblem in~\eqref{eq:trust_region_generic}.  
As mentioned in Section~\ref{subsec:computational}, our first step is to approximately compute
the minimum eigenvalue of $\mA$ to determine whether the problem is convex. 
To this end, we define the following oracle:  
\begin{definition}\label{def:minevec}
    The $\mathsf{MinEvec}(\mA; \delta)$ oracle  takes $\mA \in \mathbb{S}^d$ and $\delta>0$ %
    as inputs. 
    It outputs $\hat{\lambda}_{\min} \in \reals$ and $\hat{\vv}_{\min} \in \reals^d$ such that
    one of the following outcomes holds: 
    \begin{enumerate}[(a)]
        \item $\hat{\lambda}_{\min} \geq 0$, which implies that $\lambda_{\min}(\mA) \geq \hat{\lambda}_{\min} \geq 0$; 
        \item $\hat{\lambda}_{\min}<0$, which implies that $\hat{\lambda}_{\min} + \delta \geq \lambda_{\min}(\mA) \geq \hat{\lambda}_{\min}$ %
        and $\|\mA \hat{\vv}_{\min} - \hat{\lambda}_{\min} \hat{\vv}_{\min}\| \leq \delta$. 
    \end{enumerate}
\end{definition}
To summarize, the $\mathsf{MinEvec}$ oracle has two possible outcomes. 
In the first case, we have $\hat{\lambda}_{\min} \geq 0$, which certifies that the input matrix $\mA$ is positive semidefinite (PSD). In the second case, we have $\hat{\lambda}_{\min} < 0$, and further we guarantee that the actual minimum eigenvalue $\lambda_{\min}(\mA)$ lies in the interval $[\hat{\lambda}_{\min}, \hat{\lambda}_{\min}+\delta]$ and $\hat{\vv}$ is an approximate eigenvector such that $\|\mA \hat{\vv} - \hat{\lambda}_{\min} \hat{\vv}\| \leq \delta$. 
We implement the $\mathsf{MinEvec}$ oracle based on the Lanczos method with a random start~\cite{kuczynski1992estimating} and the details are given in Appendix~\ref{appen:lanczos}. For now, we present the following proposition that summarizes the computational cost of the $\mathsf{MinEvec}$ oracle.  

\begin{proposition}\label{prop:minevec}
    Given an input matrix $\mA \in \sS^d$, suppose $B>0$ is an upper bound on $\lambda_{\max}(\mA) - \lambda_{\min}(\mA)$. Then the $\mathsf{MinEvec}(\mA;\delta)$ oracle can be implemented, with success probability at least $1-q$, using at most $\lceil\frac{1}{4}\sqrt{\frac{2B}{\delta}}\log(\frac{44d B}{q^2 \delta})+\frac{1}{2} \rceil$ matrix-vector products.
\end{proposition}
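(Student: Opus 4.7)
The plan is to realize $\mathsf{MinEvec}(\mA;\delta)$ by the Lanczos algorithm started from a uniformly random unit vector on the sphere of $\reals^{d}$, and to control the accuracy of the smallest Ritz value via the Kuczy\'nski--Wo\'zniakowski tail bound~\cite{kuczynski1992estimating}. One Lanczos step costs exactly one matrix--vector product with $\mA$, so the iteration count equals the matrix--vector-product count; the remaining per-iteration work (maintaining the Krylov basis and eventually diagonalizing the $k\times k$ tridiagonal matrix $\mT_{k}$) does not involve $\mA$ itself and is negligible for the bound in question.

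Concretely, I would run $k$ Lanczos steps, extract the smallest Ritz value $\theta$ and its Ritz vector $\vv$ from $\mT_{k}$, and return $\hat{\lambda}_{\min} := \theta - \delta$ together with $\hat{\vv}_{\min} := \vv$; the choice of Case~(a) versus Case~(b) is then dictated by the sign of $\hat{\lambda}_{\min}$. Two ingredients establish correctness of the eigenvalue inequalities. First, Cauchy interlacing yields $\theta \geq \lambda_{\min}(\mA)$, so $\hat{\lambda}_{\min}+\delta \geq \lambda_{\min}(\mA)$ deterministically. Second, since Lanczos is shift-invariant, I can apply the Kuczy\'nski--Wo\'zniakowski bound to the PSD matrix $\lambda_{\max}(\mA)\mI - \mA$ (whose spectral radius is at most $B$) and transport the conclusion back to $\mA$; this produces, with probability at least $1-q$, a bound of the form $\theta - \lambda_{\min}(\mA) \leq B\cdot \Phi(k,d,q)$ where $\Phi$ decays like $(\log(d/q))^{2}/k^{2}$. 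Solving $B\cdot \Phi(k,d,q) \leq \delta$ for $k$ and rounding up gives the claimed iteration count $\lceil \tfrac{1}{4}\sqrt{2B/\delta}\,\log(44dB/(q^{2}\delta)) + \tfrac{1}{2}\rceil$ after a short calculation that absorbs the absolute constants coming from K-W (e.g.\ $1.648\sqrt{d}\exp(-(2k-1)\sqrt{\epsilon})\leq q$ with $\epsilon = \delta/B$) into the safe numerical choices $\tfrac{1}{8}$ and $44$.

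It remains to certify the residual bound $\|\mA\hat{\vv}_{\min} - \hat{\lambda}_{\min}\hat{\vv}_{\min}\| \leq \delta$ required in Case~(b). For this I would use the Lanczos identity $\|\mA\vv - \theta\vv\| = \beta_{k+1}\lvert \ve_{k}^{\top}\vs\rvert$, with $\vs$ the eigenvector of $\mT_{k}$ at $\theta$ and $\beta_{k+1}$ the $(k{+}1)$-th subdiagonal, together with the triangle inequality $\|\mA\vv - \hat{\lambda}_{\min}\vv\| \leq \|\mA\vv - \theta\vv\| + \delta\|\vv\|$. Running the Lanczos recursion at the tighter tolerance $\delta/2$ (which changes $k$ only by a constant factor that is absorbed by the ceiling in the final bound) simultaneously forces the eigenvalue gap and the Ritz residual to be at most $\delta/2$, after which setting $\hat{\lambda}_{\min} := \theta - \delta/2$ closes the argument. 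The main obstacle of the whole proof is precisely this bookkeeping---matching the absolute constants so that the final iteration count is exactly the one stated in the proposition rather than some larger multiple, and making sure the failure events of the eigenvalue bound and the residual bound are controlled under a single probability budget~$q$. Nothing is conceptually subtle; the work lies in carefully tracking the interplay between the K-W exponent, the PSD shift, the target tolerance $\delta/2$, and the ceiling.
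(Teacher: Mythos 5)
There is a genuine gap, and it is conceptual, not bookkeeping: your plan does not have a valid way to certify the residual condition $\|\mA \hat{\vv}_{\min} - \hat{\lambda}_{\min}\hat{\vv}_{\min}\| \leq \delta$ in Case~(b) at the stated iteration count.

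You propose to return the Ritz vector $\vv$ of $\mT_k$ at the smallest Ritz value $\theta$ and to argue that running Lanczos ``at tolerance $\delta/2$'' bounds both the Rayleigh-quotient gap $\theta-\lambda_{\min}(\mA)$ and the Ritz residual $\beta_{k+1}\lvert\ve_k^\top \vs\rvert$ by $\delta/2$. But the Kuczy\'nski--Wo\'zniakowski bound~\cite{kuczynski1992estimating} (the paper's Lemma~\ref{prop:lanczos_estimate}) controls only the Rayleigh quotient; it says nothing about $\beta_{k+1}\lvert\ve_k^\top\vs\rvert$. If you instead try to convert the Rayleigh-quotient guarantee into a residual guarantee via the Bhatia--Davis inequality, you get $\|\mA\vv-\theta\vv\|^2 \leq (\lambda_{\max}(\mA)-\theta)(\theta-\lambda_{\min}(\mA)) \leq B\cdot(\theta-\lambda_{\min}(\mA))$, so ensuring a residual of $\delta/2$ forces $\theta-\lambda_{\min}(\mA)\lesssim \delta^2/B$, i.e.\ $\rho \lesssim \delta^2/B^2$ in the K--W bound, which yields an iteration count scaling like $B/\delta$ rather than $\sqrt{B/\delta}$. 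That is a polynomially worse bound than the proposition claims, so the shortcut ``tighten $\delta$ to $\delta/2$ and absorb constants'' cannot work.

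The paper handles this with a genuinely new ingredient (Lemma~\ref{lem:lanczos_new}), a K--W-type bound for $\min_{\vv\in\mathcal{K}_t(\mA,\vu)}\|\mA\vv\|/\|\vv\|$ rather than for the Rayleigh quotient, proved by redoing the K--W polynomial-approximation argument on $\mA^2$. This is exactly what gives the $\sqrt{B/\delta}$-type iteration count for the residual stage. Correspondingly, the returned $\hat{\vv}_{\min}$ is \emph{not} the Ritz vector at the smallest Ritz value of $\mT_{N_2}$; it is the minimizer of $\|(\mA-\hat{\lambda}_{\min}\mI)\vv\|$ over the Krylov subspace, computed from the smallest eigenvector of the pentadiagonal matrix $\mM = (\mT'-\hat{\lambda}_{\min}\mI)^2+\beta_{N_2+1}^2\ve_{N_2}\ve_{N_2}^\top$, which is why the second stage needs $N_2$ iterations with the larger logarithmic factor $\log(44dB/(q^2\delta))$ (the new tail bound carries an extra $\sqrt{1/\rho}$ prefactor). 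The first stage ($N_1$ iterations, bounding $\bar{\lambda}_{\min}$, setting $\hat{\lambda}_{\min}=\bar{\lambda}_{\min}-\delta/2$, union bound over the two failure events) matches your plan, so that part is fine; what is missing is the residual lemma and the change of Ritz/eigenvector target that it requires.
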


Another building block of our implementation is a fast algorithm for solving convex-constrained optimization problems. Specifically, consider the minimization problem 
\begin{equation}\label{eq:constrained}
    \min_{\vx \in Q} g(\vx),
\end{equation}
where $g$ is a convex function with $L_g$-Lipschitz gradients and $Q$ is a closed convex set. 
Then we have the following convergence results based on \cite{lee2021geometric,kim2023time} and we defer the details to Appendix~\ref{appen:gradient_norm}. 
\begin{proposition}\label{prop:fista_sfg}
Suppose $g: \reals^d \rightarrow \reals$ is convex with $L_g$-Lipschitz gradients and $Q$ is closed and convex. Let $\vx^*$ denote the optimal solution of \eqref{eq:constrained}. 
There exists an algorithm $\mathsf{FISTA\mathrm{+}SFG}$ that, initialized at $\vx_0$, finds $\hat{\vx} \in Q$ satisfying 
$\min_{\vu \in \mathcal{N}_Q(\hat{\vx})}\|\nabla g(\hat{\vx}) + \vu\| \leq \delta$ after at most 
$2\sqrt{\frac{10 L_g \|\vx_0-\vx^*\|}{\delta}}$ gradient queries and projections onto $Q$. 
\end{proposition}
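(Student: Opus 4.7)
The plan is to establish Proposition~\ref{prop:fista_sfg} via a two-phase scheme that first reduces the function-value suboptimality and then reduces the constrained optimality residual $\min_{\vu \in \mathcal{N}_Q(\hat{\vx})}\|\nabla g(\hat{\vx}) + \vu\|$. In Phase~1, starting from $\vx_0$, $N_1$ iterations of projected FISTA on $\min_{\vx \in Q} g(\vx)$ produce $\vx_{N_1} \in Q$ satisfying $g(\vx_{N_1}) - g(\vx^*) \leq \frac{2 L_g \|\vx_0 - \vx^*\|^2}{(N_1+1)^2}$ by the classical accelerated-gradient analysis extended to the composite/constrained setting. Each iteration requires exactly one gradient query and one projection onto $Q$.

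In Phase~2, starting from $\vx_{N_1}$, we invoke the gradient-norm-optimal scheme $\mathsf{SFG}$ developed in~\cite{lee2021geometric,kim2023time}, which is the constrained analogue of Kim--Fessler's OGM-G. Its guarantee, after $N_2$ iterations, is that the output $\hat{\vx} \in Q$ satisfies
\[
\min_{\vu \in \mathcal{N}_Q(\hat{\vx})}\|\nabla g(\hat{\vx}) + \vu\| \;\leq\; \frac{C\sqrt{L_g \bigl(g(\vx_{N_1}) - g(\vx^*)\bigr)}}{N_2}
\]
for an absolute constant $C$. Chaining with the Phase~1 bound yields
\[
\min_{\vu \in \mathcal{N}_Q(\hat{\vx})}\|\nabla g(\hat{\vx}) + \vu\| \;\leq\; \frac{C' L_g \|\vx_0 - \vx^*\|}{(N_1+1)\, N_2}.
\]
Choosing $N_1 = N_2 \approx \sqrt{C' L_g \|\vx_0 - \vx^*\|/\delta}$ then forces the right-hand side to be at most $\delta$, and the total iteration count $N_1 + N_2$ matches the claimed form $2\sqrt{10\, L_g \|\vx_0 - \vx^*\|/\delta}$, with the factor $10$ absorbing the explicit constants coming from the FISTA and $\mathsf{SFG}$ analyses.

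The main obstacle is justifying the Phase~2 guarantee in the constrained setting, since the standard OGM-G bound is stated for the raw gradient norm in unconstrained problems. The extension requires replacing the gradient with the gradient mapping $G_\alpha(\vx) = \alpha^{-1}(\vx - \Pi_Q(\vx - \alpha \nabla g(\vx)))$, whose norm upper-bounds the quantity $\min_{\vu \in \mathcal{N}_Q(\hat{\vx})}\|\nabla g(\hat{\vx}) + \vu\|$ up to absolute constants when $\alpha = 1/L_g$, and then verifying that the potential-function argument used in \cite{kim2023time} continues to yield an $\mathcal{O}(\sqrt{L_g(g(\vx_{N_1}) - g(\vx^*))}/N_2)$ rate for the gradient-mapping norm. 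Once this is in place, the two-phase concatenation and the balancing argument above immediately give the bound in Proposition~\ref{prop:fista_sfg}, with the observation that each Phase~2 iteration again costs one gradient query and one projection onto $Q$.
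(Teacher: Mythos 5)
Your two-phase plan — projected FISTA to reduce function-value suboptimality, followed by a constrained gradient-norm-optimal method, then balancing $N_1 = N_2$ — is exactly the route the paper takes (FISTA for $N$ steps, then Super FISTA-G of \cite{kim2023time} for $N$ steps, with $N = \sqrt{10 L_g \|\vx_0 - \vx^*\|/\delta}$). The "main obstacle" you flag is not actually an obstacle here: \cite{kim2023time} already proves the Phase~2 bound directly in the constrained/composite form $\min_{\vu \in \mathcal{N}_Q(\hat{\vx})}\|\nabla g(\hat{\vx}) + \vu\| \leq \sqrt{50 L_g (g(\vx_{N_1})-g^*)/((N_2+1)(N_2+2))}$, so no gradient-mapping detour or separate potential-function verification is required.
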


\begin{subroutine}[!t]\small
    \caption{$\mathsf{TRSolver}(\mA,\vb,D;\delta)$}\label{alg:TRSolver}
    \begin{algorithmic}[1]
        \State \textbf{Input:} $\mA \in \mathbb{S}^d$, $\vb \in \reals^d$, $D>0$, $\delta>0$ %
        \State Set $(\hat{\lambda}_{\min}, \hat{\vv}_{\min}) = \mathsf{MinEvec}(\mA;\frac{\delta}{2D})$
        \If{$\hat{\lambda}_{\min} \geq 0$}
        \State Run $\mathsf{FISTA\mathrm{+}SFG}$ on Problem~\eqref{eq:trust_region_generic}
        to find $\tilde{\vDelta} \in B_D(0)$ with $\min_{\vv \in \mathcal{N}(\tilde{\vDelta})}\|\mA \tilde{\vDelta} + \vb + \vv\| \leq  \delta$
        \State Return $\hat{\vDelta} \leftarrow \tilde{\vDelta}$
        \Else
        \State Run $\mathsf{FISTA\mathrm{+}SFG}$ on Problem~\eqref{eq:regularized}
        to find $\tilde{\vDelta}\in B_D(0)$ with $\min_{\vv \in \mathcal{N}(\tilde{\vDelta})}\|(\mA- \hat{\lambda}_{\min}\mI)\tilde{\vDelta} + \vb + \vv\| \leq  \frac{\delta}{2}$
        \If{$\|\tilde{\vDelta}\| = D$}
        \State Return $\hat{\vDelta} \leftarrow \tilde{\vDelta}$
        \Else 
        \State Compute $\alpha = \sqrt{(\tilde{\vDelta}^\top \hat{\vv}_{\min})^2 + (D^2 - \|\tilde{\vDelta}\|^2)} - \tilde{\vDelta}^\top \hat{\vv}_{\min}$
        \State Return $\hat{\vDelta} \leftarrow \tilde{\vDelta} + \alpha \hat{\vv}_{\min}$ \quad \Comment{The choice of $\alpha$ ensures that $\|\hat{\vDelta}\| = D$}
        \EndIf
        \EndIf
    \end{algorithmic}
  \end{subroutine}

Now we are ready to describe our procedure for solving \eqref{eq:trust_region_generic}, which is presented in Subroutine~\ref{alg:TRSolver}. Specifically, we first call the $\mathsf{MinEvec}(\mA; \frac{\delta}{2D})$ oracle to obtain the approximate eigenvalue $\hat{\lambda}_{\min}$ and the approximate eigenvector $\hat{\vv}_{\min}$. Depending on the sign of $\hat{\lambda}_{\min}$, we consider the following two cases:   
\begin{itemize}
    \item If $\hat{\lambda}_{\min} \geq 0$, we are in the first case of Definition~\ref{def:minevec} and this implies that $\lambda_{\min}(\mA) \geq 0$, which certifies that Problem~\eqref{eq:trust_region_generic} is convex. Hence, we run $\mathsf{FISTA\mathrm{+}SFG}$ on Problem~\eqref{eq:trust_region_generic} with $g(\vDelta)= \frac{1}{2}\vDelta^\top \mA \vDelta + \vb^\top \vDelta$, $Q = B_D(0)$ and $\vDelta_0 = 0$. 
    Note that the gradient of $g(\vDelta)$ is $\lambda_{\max}(\mA)$-Lipschitz and $\sup_{\vDelta \in Q}\|\vDelta_0 - \vDelta\| \leq D$. Thus, by Proposition~\ref{prop:fista_sfg}, after at most $2\sqrt{\frac{10 \lambda_{\max}(\mA) D}{\delta}}$ iterations, we can find $\tilde{\vDelta} \in Q$ such that:
    \begin{equation}\label{eq:subgradient_norm}
        \min_{\vv \in \mathcal{N}_Q(\tilde{\vDelta})}\|\mA \tilde{\vDelta} + \vb + \vv\| \leq \delta,
    \end{equation}
    which shows that $\tilde{\vDelta}$ satisfies the requirement in Definition~\ref{def:TRSolver}. 

    \item  If $\hat{\lambda}_{\min} < 0$, we are in the second case of Definition~\ref{def:minevec}, which implies that $\hat{\lambda}_{\min}  \leq \lambda_{\min}(\mA) \leq \hat{\lambda}_{\min} + \frac{\delta}{2D}$ and $\|\mA \hat{\vv}_{\min} - \hat{\lambda}_{\min} \hat{\vv}\| \leq \frac{\delta}{2D}$. Consider the regularized problem:
    \begin{equation}\label{eq:regularized}
        \min_{\|\vDelta\| \leq D} \left\{ \frac{1}{2}\vDelta^\top \left(\mA - \hat{\lambda}_{\min} \mI\right) \vDelta + \vb^\top \vDelta\right\}.
    \end{equation}
    Since $\hat{\lambda}_{\min} \leq \lambda_{\min}(\mA)$, the matrix $\mA - \hat{\lambda}_{\min} \mI$ is PSD and the problem in \eqref{eq:regularized} is convex. Hence, we can similarly run $\mathsf{FISTA\mathrm{+}SFG}$ on Problem~\eqref{eq:regularized} with $Q = \mathcal{B}_D(0)$ and $\vDelta_0 = 0$. Again by Proposition~\ref{prop:fista_sfg}, after at most:
    \begin{align}
    2\sqrt{\frac{20 (\lambda_{\max}(\mA) - \hat{\lambda}_{\min}) D}{\delta}} &\leq 2\sqrt{\frac{20 (\lambda_{\max}(\mA) - {\lambda}_{\min}(\mA) + \frac{\delta}{2D}) D}{\delta}}\nonumber \\
    &= 2\sqrt{\frac{20 (\lambda_{\max}(\mA) - {\lambda}_{\min}(\mA)) D}{\delta}+10}
    \end{align}
    iterations, 
    we can find $\tilde{\vDelta} \in Q$ such that: \begin{equation}\label{eq:subgradient_norm_regularized}
        \min_{\vv \in \mathcal{N}_Q(\tilde{\vDelta})}\|(\mA - \hat{\lambda}_{\min}\mI)\tilde{\vDelta} + \vb + \vv\| \leq \frac{\delta}{2}.
    \end{equation}
    We further consider two subcases {similar to the approach in \cite{honguyen2017second,wang2017linear}.} 
    \begin{itemize}
        \item If $\|\tilde{\vDelta}\| = D$, i.e., $\tilde{\vDelta}$ is on the boundary of $Q$, 
        we can show that $\tilde{\vDelta}$ satisfies the requirement in Definition~\ref{def:TRSolver}.
        Indeed, in this case, $\mathcal{N}_Q(\tilde{\vDelta}) = \{c \tilde{\vDelta}: c\geq 0\}$. Since $\hat{\lambda}_{\min}<0$, we have $ - \hat{\lambda}_{\min}\tilde{\vDelta} + \vv \in \mathcal{N}_Q(\tilde{\vDelta})$ for any $\vv \in \mathcal{N}_Q(\tilde{\vDelta})$. Thus, it follows from \eqref{eq:subgradient_norm_regularized}  that $\min_{\vv \in \mathcal{N}_Q(\tilde{\vDelta})}\|\mA \tilde{\vDelta} + \vb + \vv\| \leq \min_{\vv \in \mathcal{N}_Q(\tilde{\vDelta})}\|(\mA - \hat{\lambda}_{\min}\mI)\tilde{\vDelta} + \vb + \vv\| \leq \frac{\delta}{2}$. 
        \item If $\|\tilde{\vDelta}\| < D$, then we set $\hat{\vDelta} \leftarrow \tilde{\vDelta} + \alpha \hat{\vv}_{\min}$, where $\alpha$ is chosen such that $\|\hat{\vDelta}\| = D$ (note that we can ensure $|\alpha| \leq D$). We claim that $\hat{\vDelta}$ satisfies the requirement in Definition~\ref{def:TRSolver}. Indeed, since $\tilde{\vDelta}$ is in the interior of $D$, we have $\mathcal{N}_Q(\tilde{\vDelta}) = \{0\}$ and \eqref{eq:subgradient_norm_regularized} becomes $\|(\mA - \hat{\lambda}_{\min}\mI)\tilde{\vDelta} + \vb \| \leq \frac{\delta}{2}$. Moreover, we can compute: 
        \begin{align*}
            \min_{\vv \in \mathcal{N}_Q(\hat{\vDelta})}\|\mA \hat{\vDelta} + \vb + \vv\| &\leq  \|(\mA - \hat{\lambda}_{\min}\mI) \hat{\vDelta} + \vb\| \qquad (\text{since } - \hat{\lambda}_{\min}\hat{\vDelta}\in \mathcal{N}(\hat{\vDelta})) \\
            & \leq \|(\mA - \hat{\lambda}_{\min}\mI) \tilde{\vDelta} + \vb\| + \alpha \|(\mA - \hat{\lambda}_{\min}\mI)\hat{\vv}_{\min}\| \\
            &\leq \frac{\delta}{2} + D \cdot \frac{\delta}{2D} = \delta. 
        \end{align*}
    \end{itemize}
\end{itemize}
Considering all cases, we can conclude that $\hat{\vDelta}$ returned by Subroutine~\ref{alg:TRSolver} satisfies the condition in Definition~\ref{def:TRSolver}. 
Moreover, together with Proposition~\ref{prop:minevec}, we have the following guarantee on the computational cost of Subroutine~\ref{alg:TRSolver}.

\begin{corollary}\label{coro:TRSolver}
Given an input matrix $\mA \in \sS^d$, suppose $B>0$ is an upper bound on $\lambda_{\max}(\mA) - \lambda_{\min}(\mA)$. Then we can implement the $\mathsf{TRSolver}$ oracle with success probability at least $1-q$, with
the total number of matrix-vector products bounded by: 
\begin{equation*}
    \biggl\lceil\frac{1}{2}\sqrt{\frac{BD}{\delta}}\log(\frac{88d BD}{q^2 \delta})+\frac{1}{2} \biggr\rceil + \max\biggl\{2\sqrt{\frac{10 \lambda_{\max}(\mA) D}{\delta}},  2\sqrt{\frac{20 B D}{\delta}+10}\biggl\}.
\end{equation*}
\end{corollary}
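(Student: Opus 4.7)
The plan is to assemble the bound by accounting for each matrix-vector product consumed by Subroutine~\ref{alg:TRSolver}: first the single call to $\mathsf{MinEvec}$, and then whichever of the two $\mathsf{FISTA\mathrm{+}SFG}$ branches is executed. Correctness of the returned $\hat{\vDelta}$ has already been verified in the body above, so the only remaining work is bookkeeping the computational cost.

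First I would apply Proposition~\ref{prop:minevec} to the call $\mathsf{MinEvec}(\mA;\delta/(2D))$. Substituting $\delta \leftarrow \delta/(2D)$ into the bound from that proposition yields
\[
\Bigl\lceil \tfrac{1}{4}\sqrt{\tfrac{2B \cdot 2D}{\delta}}\log\Bigl(\tfrac{44 d B \cdot 2D}{q^2 \delta}\Bigr) + \tfrac{1}{2}\Bigr\rceil
= \Bigl\lceil \tfrac{1}{2}\sqrt{\tfrac{BD}{\delta}}\log\Bigl(\tfrac{88 d BD}{q^2 \delta}\Bigr)+\tfrac{1}{2}\Bigr\rceil
\]
matrix-vector products, with success probability at least $1-q$. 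This accounts for the first term in the claimed bound.

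Next I would treat the two branches separately, observing that they are mutually exclusive so only one contributes to the cost. In the convex branch $\hat{\lambda}_{\min}\geq 0$, $\mathsf{FISTA\mathrm{+}SFG}$ is applied to \eqref{eq:trust_region_generic} with Lipschitz constant $L_g = \lambda_{\max}(\mA)$, initialization $\vDelta_0 = 0$, and feasible set $B_D(0)$, so $\|\vDelta_0 - \vDelta^*\|\leq D$. Proposition~\ref{prop:fista_sfg} then gives at most $2\sqrt{10 \lambda_{\max}(\mA) D/\delta}$ iterations, each costing one matrix-vector product and one trivial Euclidean-ball projection. In the nonconvex branch $\hat{\lambda}_{\min}<0$, $\mathsf{FISTA\mathrm{+}SFG}$ is applied to the regularized problem \eqref{eq:regularized} with accuracy $\delta/2$; its Lipschitz constant is $\lambda_{\max}(\mA)-\hat{\lambda}_{\min}$. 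Using the $\mathsf{MinEvec}$ guarantee $\hat{\lambda}_{\min} \geq \lambda_{\min}(\mA) - \delta/(2D)$, this Lipschitz constant is upper bounded by $\lambda_{\max}(\mA)-\lambda_{\min}(\mA)+\delta/(2D) \leq B + \delta/(2D)$. Plugging into Proposition~\ref{prop:fista_sfg} gives at most
\[
2\sqrt{\tfrac{20(B+\delta/(2D)) D}{\delta}} = 2\sqrt{\tfrac{20 BD}{\delta}+10}
\]
iterations. Taking the maximum of these two quantities yields the second term in the stated bound.

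There is no real obstacle; the argument is essentially a substitution and case analysis. The only detail worth flagging is that the post-processing in the nonconvex subcase when $\|\tilde{\vDelta}\|<D$ (computing $\alpha$ and forming $\hat{\vDelta}=\tilde{\vDelta}+\alpha\hat{\vv}_{\min}$) uses no additional matrix-vector products, so it does not affect the count. Collecting the $\mathsf{MinEvec}$ cost with the maximum over the two $\mathsf{FISTA\mathrm{+}SFG}$ costs gives exactly the expression in the corollary, and the overall success probability is determined solely by the $\mathsf{MinEvec}$ call, yielding $1-q$.
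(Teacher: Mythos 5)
Your proposal is correct and takes essentially the same approach as the paper's proof: substitute $\delta/(2D)$ into Proposition~\ref{prop:minevec} to obtain the first term, then bound the $\mathsf{FISTA\mathrm{+}SFG}$ iteration count in each of the two branches (using $\hat{\lambda}_{\min}\geq\lambda_{\min}(\mA)-\delta/(2D)$ and $\lambda_{\max}(\mA)-\lambda_{\min}(\mA)\leq B$ in the nonconvex branch) and take the maximum. The observations that only the $\mathsf{MinEvec}$ call affects the success probability and that the post-processing step costs no matrix-vector products are also correct and match the paper.
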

\begin{proof}
    The first term corresponds to the computational cost of calling $\mathsf{MinEvec}(\mA;\frac{\delta}{2D})$, as established in Proposition~\ref{prop:minevec}. The second term corresponds to the cost of running the $\mathsf{FISTA\mathrm{+}SFG}$ algorithm. As discussed above, in the first case, this cost is bounded by $2\sqrt{\frac{10 \lambda_{\max}(\mA) D}{\delta}}$, while in the second case, it is bounded by $2\sqrt{\frac{20 B D}{\delta}+10}$.  
\end{proof}

\subsection{Implementation of \texorpdfstring{$\mathsf{MinEvec}$}{MinEvec}
}\label{appen:lanczos}

Our goal in this section is to describe the implementation of the $\mathsf{MinEvec}$ oracle in Definition~\ref{def:minevec} and to prove Proposition~\ref{prop:minevec}. 
As mentioned in the previous section, our approach is based on the Lanczos method with a random start~\cite{kuczynski1992estimating}, which approximates eigenvectors and eigenvalues within a Krylov subspace. Hence, we first recall the following lemma that characterizes the convergence property of the Lanczos process.  

\begin{lemma}[{\cite[Theorem 4.2]{kuczynski1992estimating}}]\label{prop:lanczos_estimate}
    Consider a symmetric matrix $\mA \in \sS^d$ and let $\lambda_{\max}(\mA)$ and $\lambda_{\min}(\mA)$ be its maximum and minimum eigenvalues, respectively. Let $\vu \in \reals^d$ be a random vector drawn uniformly from the unit sphere. 
    Define the Krylov subspace $\mathcal{K}_t(\mA,\vu)$ as: 
    \begin{equation}\label{eq:krylov}
        \mathcal{K}_t(\mA,\vu) = \mathrm{span}\{ \vu, \mA\vu, \dots, \mA^{t-1} \vu\}. 
    \end{equation}    
    Then we have: 
    \begin{align*}
        \Pr\Bigl(\min_{\vv \in \mathcal{K}_t(\mA,\vu)} \frac{\vv^\top \mA \vv}{\vv^\top \vv} \geq \lambda_{\min}(\mA) + \rho (\lambda_{\max}(\mA) - \lambda_{\min}(\mA))\Bigr) &\leq 1.648 \sqrt{d} e^{-\sqrt{\rho}(2t-1)}, \\
        \Pr\Bigl(\max_{\vv \in \mathcal{K}_t(\mA,\vu)} \frac{\vv^\top \mA \vv}{\vv^\top \vv} \leq \lambda_{\max}(\mA) - \rho (\lambda_{\max}(\mA) - \lambda_{\min}(\mA))\Bigr) &\leq 1.648 \sqrt{d} e^{-\sqrt{\rho}(2t-1)}. 
    \end{align*}
\end{lemma}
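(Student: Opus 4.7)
The plan is to combine a Chebyshev-polynomial approximation argument with a tail estimate for the projection of a uniform random unit vector onto a fixed direction. First, diagonalize $\mA = \sum_{i=1}^d \lambda_i \vq_i \vq_i^\top$ with $\lambda_1 \leq \cdots \leq \lambda_d$ (so $\lambda_1 = \lambda_{\min}(\mA)$, $\lambda_d = \lambda_{\max}(\mA)$), and expand $\vu = \sum_i \alpha_i \vq_i$ in this eigenbasis. Because $\mathcal{K}_t(\mA,\vu) = \{p(\mA)\vu : \deg p \leq t-1\}$, every nonzero $\vv \in \mathcal{K}_t(\mA,\vu)$ has the form $\vv = p(\mA)\vu$ for some polynomial $p$ with $\deg p \leq t-1$, and a direct computation yields
$$
\frac{\vv^\top \mA \vv}{\vv^\top \vv} - \lambda_1 \;=\; \frac{\sum_{i=1}^d \alpha_i^2\,(\lambda_i - \lambda_1)\, p(\lambda_i)^2}{\sum_{i=1}^d \alpha_i^2\, p(\lambda_i)^2}.
$$
Thus the first claim reduces to exhibiting, for a typical draw of $\vu$, a single polynomial $p$ of degree at most $t-1$ for which the right-hand side is $\leq \rho(\lambda_d - \lambda_1)$; all of the randomness enters only through the coefficient $\alpha_1 = \vq_1^\top \vu$.

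For the deterministic step I would take $p$ to be a shifted-scaled Chebyshev polynomial designed to be uniformly small on $[\lambda_1 + \rho(\lambda_d - \lambda_1),\,\lambda_d]$ while remaining large at $\lambda_1$. After linearly rescaling the spectrum to $[-1,1]$, the classical estimate
$$
T_n\!\left(\tfrac{1+\rho}{1-\rho}\right) \;\geq\; \tfrac{1}{2}\!\left(\tfrac{1+\sqrt{\rho}}{1-\sqrt{\rho}}\right)^{\!n} \;\geq\; \tfrac{1}{2}\,e^{\,2n\sqrt{\rho}},
$$
together with $|T_n| \leq 1$ on $[-1,1]$, controls the far eigenvalues. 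Writing $\lambda_i - \lambda_1 = \rho(\lambda_d - \lambda_1) + (\lambda_i - \lambda_1 - \rho(\lambda_d - \lambda_1))$ and noting that the second piece is nonpositive for eigenvalues inside $[\lambda_1,\,\lambda_1 + \rho(\lambda_d - \lambda_1)]$ isolates the contribution from the far eigenvalues, which is suppressed by the Chebyshev factor. The main obstacle I anticipate is pinning down the sharp exponent $(2t-1)\sqrt{\rho}$ rather than the naive $(t-1)\sqrt{\rho}$; this is achieved by applying the Chebyshev extremal bound not to $p$ itself but to the weighted nonnegative polynomial $(x - \lambda_1)\,p(x)^2$, which has degree $2t-1$.

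For the probabilistic step, recall that for $\vu$ uniform on $S^{d-1}$ the coordinate $\alpha_1 = \vq_1^\top \vu$ has density proportional to $(1 - a^2)^{(d-3)/2}$ on $[-1,1]$. An elementary Beta-function estimate on the normalizing constant gives $\Pr(|\alpha_1| \leq \varepsilon) \leq 1.648\sqrt{d}\,\varepsilon$ for every $\varepsilon \in (0,1]$, where careful tracking of the numerical constant produces the specific factor $1.648$. Choosing $\varepsilon$ on the scale of $e^{-(2t-1)\sqrt{\rho}}$, the surplus term in the deterministic bound is absorbed into $\rho(\lambda_d - \lambda_1)$ whenever $|\alpha_1| > \varepsilon$, so the failure probability is at most $1.648\sqrt{d}\,e^{-(2t-1)\sqrt{\rho}}$, as asserted. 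The second inequality follows immediately by applying the first one to $-\mA$, since $\mathcal{K}_t(-\mA,\vu) = \mathcal{K}_t(\mA,\vu)$ (polynomials in $-\mA$ are polynomials in $\mA$) while $\lambda_{\min}(-\mA) = -\lambda_{\max}(\mA)$ and $\lambda_{\max}(-\mA) = -\lambda_{\min}(\mA)$, so the minimum Rayleigh quotient over the Krylov subspace with respect to $-\mA$ is exactly the negative of the maximum with respect to $\mA$.
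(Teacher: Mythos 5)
The paper does not supply a proof of Lemma~\ref{prop:lanczos_estimate}: it is imported verbatim as Theorem~4.2 of Kuczynski and Wozniakowski. Your sketch correctly reconstructs the argument of that reference, which the paper also mirrors when it \emph{does} prove the close cousin Lemma~\ref{lem:lanczos_new}: parameterize Krylov vectors as $p(\mA)\vu$ with $\deg p\le t-1$; split the spectrum at the $\rho$-threshold and suppress the far eigenvalues with a shifted--scaled Chebyshev polynomial, the exponent $(2t-1)\sqrt\rho$ coming from the fact that the weighted polynomial $(1-\rho-x)T(x)^2$ has degree $2t-1$ (this is exactly the content of the paper's Lemma~\ref{lem:poly_approx}); and handle the randomness via an anticoncentration bound on a single coordinate of a uniform random unit vector (the paper's Lemma~\ref{lem:prob}, equivalently $\Pr(|\alpha_1|\le\varepsilon)\le 0.824\sqrt d\,\varepsilon$ after the substitution $\varepsilon=1/\sqrt{1+c}$). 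Your reduction of the second inequality to the first via $\mA\mapsto -\mA$ is clean and correct, since $\mathcal K_t(-\mA,\vu)=\mathcal K_t(\mA,\vu)$ and Rayleigh quotients change sign. The one thing left imprecise is how the constant $1.648$ and the choice of anticoncentration threshold fall out of stitching the deterministic and probabilistic pieces together---you flag this as bookkeeping, which is fair---but the overall strategy is sound and is essentially the same as the cited source and the paper's own Lemma~\ref{lem:lanczos_new}.
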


Lemma~\ref{prop:lanczos_estimate} describes how the Rayleigh quotient, $\frac{\vv^\top \mA \vv}{\vv^\top \vv}$, converges to the extreme eigenvalues of~$\mA$ as the order of the Krylov subspace increases.  
As will be evident later, we also require an additional result that characterizes the convergence behavior of $\frac{\|\mA\vv\|}{\|\vv\|}$. 
Since we could not find such a result in the existing literature, we follow the analysis in \cite{kuczynski1992estimating} to derive the following lemma.

\begin{lemma}\label{lem:lanczos_new}
    Consider a symmetric matrix $\mA \in \sS^d$ and let $\lambda_{\max}(\mA)$ and $\lambda_{\min}(\mA)$ be its maximum and minimum eigenvalues, respectively. Let $\vu$ be a random vector drawn uniformly from the unit sphere and recall the definition of the Krylov subspace from \eqref{eq:krylov}. Then we have:    
    \begin{equation*}
        \Pr\left(\min_{\vv \in \mathcal{K}_t(\mA,\vu)} \frac{\|\mA \vv\|}{\|\vv\|} \geq \lambda_{\min}(\mA) + \rho (\lambda_{\max}(\mA) - \lambda_{\min}(\mA)) \right) \leq 2.34 \sqrt{\frac{d}{\rho}} e^{-\sqrt{\rho}(2t-1)}.
    \end{equation*}
\end{lemma}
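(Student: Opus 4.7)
The proof follows the template of the Kuczy\'{n}ski--Wo\'{z}niakowski argument used to establish Lemma~\ref{prop:lanczos_estimate}, with the Rayleigh quotient $\vv^\top\mA\vv/\|\vv\|^2$ replaced by $\|\mA\vv\|^2/\|\vv\|^2$. Let $\mu := \lambda_{\min}(\mA) + \rho(\lambda_{\max}(\mA)-\lambda_{\min}(\mA))$; since $\|\mA\vv\|/\|\vv\| \geq 0$ always, I may assume $\mu > 0$, as otherwise the event is vacuous and the inequality reduces to a numerical check on the stated bound. In this regime the event in the lemma is equivalent to $\min_{\vv \in \mathcal{K}_t(\mA,\vu)} \|\mA\vv\|^2/\|\vv\|^2 \geq \mu^2$. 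Writing $\vv = p(\mA)\vu$ with $\deg p < t$, diagonalising $\mA = \sum_i \lambda_i \vv_i \vv_i^\top$, and letting $\theta_i := \vv_i^\top \vu$, the quantity of interest is
\[
\frac{\|\mA\vv\|^2}{\|\vv\|^2} \;=\; \frac{\sum_i \lambda_i^2\, p(\lambda_i)^2\, \theta_i^2}{\sum_i p(\lambda_i)^2\, \theta_i^2},
\]
so my task reduces to exhibiting a polynomial $p^*$ of degree $<t$ and a high-probability lower bound on $\theta_1^2$ (the mass of $\vu$ along the eigenvector of $\lambda_{\min}$) which together force this ratio below $\mu^2$.

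For the polynomial, I take the shifted and scaled Chebyshev polynomial $p^*(\lambda) := T_{t-1}(z(\lambda))/T_{t-1}(z(\lambda_{\min}))$ with the affine map $z(\lambda) = (\lambda_{\max}+\mu-2\lambda)/(\lambda_{\max}-\mu)$ sending $[\mu,\lambda_{\max}]$ to $[-1,1]$. The identity $T_{t-1}(\cosh\phi)=\cosh((t-1)\phi)$ with $\cosh\phi = (1+\rho)/(1-\rho)$ gives $\phi = 2\tanh^{-1}\sqrt\rho \geq 2\sqrt\rho$, hence $T_{t-1}(z(\lambda_{\min})) \geq \tfrac{1}{2}e^{2(t-1)\sqrt\rho}$. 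Consequently $p^*(\lambda_{\min})=1$, $|p^*(\lambda)| \leq 2e^{-2(t-1)\sqrt\rho}$ on $[\mu,\lambda_{\max}]$, and $|p^*(\lambda)|\leq 1$ on $[\lambda_{\min},\mu]$ by the monotonicity of $T_{t-1}$ on $[1,\infty)$. Substituting $p=p^*$ and splitting the index set by whether $\lambda_i^2 \leq \mu^2$, the ``bad'' contribution to the numerator is at most $4\lambda_{\max}^2\, e^{-4(t-1)\sqrt\rho}$ (using $|p^*|\leq 2e^{-2(t-1)\sqrt\rho}$ on bad indices and $\sum \theta_i^2 \leq 1$), while the denominator is at least $p^*(\lambda_{\min})^2\theta_1^2 = \theta_1^2$. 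The standard Kuczy\'{n}ski--Wo\'{z}niakowski trick of replacing $\rho$ with a slightly smaller $\rho'$ in the polynomial construction, to absorb the error into the gap $\mu^2-(\mu')^2 = (\rho-\rho')(\lambda_{\max}-\lambda_{\min})(\mu+\mu')$, then shows that the ratio falls below $\mu^2$ whenever
\[
\theta_1^2 \;\gtrsim\; \frac{\lambda_{\max}^2\, e^{-4(t-1)\sqrt{\rho'}}}{\mu^2 - \lambda_{\min}^2},
\]
and the identity $\mu^2-\lambda_{\min}^2 = \rho(\lambda_{\max}-\lambda_{\min})(\mu+\lambda_{\min})$ controls the denominator by a factor of order $\rho$.

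It remains to invoke the random-start tail bound: when $\vu$ is uniform on $S^{d-1}$, $\theta_1^2$ follows a $\mathrm{Beta}(1/2,(d-1)/2)$ distribution whose density $\propto (1-x^2)^{(d-3)/2}$ yields $\Pr(\theta_1^2 \leq \epsilon) \leq c\sqrt{d\epsilon}$ for an absolute constant $c$. Inserting the threshold from the previous step and optimising the auxiliary $\rho'$ so that the lost factor of $e^{-2(t-1)\sqrt{\rho}}$ is just enough to beat the $1/\rho$ factor in the threshold produces a failure probability of the form $C\sqrt{d/\rho}\, e^{-\sqrt\rho(2t-1)}$; carefully tracking constants as in the original Kuczy\'{n}ski--Wo\'{z}niakowski analysis yields the explicit constant $2.34$ claimed in the lemma.

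The principal difficulty I anticipate is the sign asymmetry baked into $\|\mA\vv\|^2$: since this quantity depends on $\lambda_i^2$ rather than $\lambda_i$, the ``bad'' set is actually $\{i : |\lambda_i| > \mu\}$, which can include strongly negative eigenvalues. A Chebyshev polynomial designed only on $[\mu,\lambda_{\max}]$ offers no control at indices with $\lambda_i \leq -\mu$, and the bound above fails unless $\mu \geq |\lambda_{\min}(\mA)|$---which is automatic once $\mA \succeq 0$, the setting in which this lemma is ultimately applied after shifting $\mA$ by $\hat{\lambda}_{\min}\mI$ in Subroutine~\ref{alg:TRSolver}. Handling the general case rigorously would require either restricting to this regime explicitly or constructing a polynomial in $\lambda^2$ that is simultaneously small on $[-\lambda_{\max},-\mu]\cup[\mu,\lambda_{\max}]$. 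A second delicate point is matching the exponent $(2t-1)\sqrt\rho$ rather than the naive $2(t-1)\sqrt\rho$: this requires careful interplay between the choice of $\rho'$ and the Chebyshev bound, exactly as in the proof of Lemma~\ref{prop:lanczos_estimate}.
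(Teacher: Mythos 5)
Your proposal runs the Kuczy\'{n}ski--Wo\'{z}niakowski machinery directly on the quantity $\|\mA\vv\|^2/\|\vv\|^2 = \frac{\sum_i u_i^2 \lambda_i^2 p(\lambda_i)^2}{\sum_i u_i^2 p(\lambda_i)^2}$, with a Chebyshev polynomial small on $[\mu,\lambda_{\max}]$, and you yourself correctly diagnose where this breaks: because $\lambda \mapsto \lambda^2$ is not monotone, the ``bad'' index set $\{i : \lambda_i^2 > \mu^2\}$ is two-sided, and a polynomial controlled only on $[\mu,\lambda_{\max}]$ gives no handle on eigenvalues $\lambda_i < -\mu$. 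The idea you are missing, and the one the paper's proof actually uses, is to subtract $\lambda_d^2 = \lambda_{\min}^2$ before comparing and then factor
\begin{equation*}
\lambda_i^2 - \lambda_d^2 = (\lambda_i - \lambda_d)^2 + 2\lambda_d(\lambda_i - \lambda_d).
\end{equation*}
Since $\lambda_i - \lambda_d \geq 0$ for every eigenvalue, both summands are \emph{monotone} in $\lambda_i$, so after the change of variables $x_i = (\lambda_1-\lambda_i)/(\lambda_1-\lambda_d) \in [0,1]$ and $Q(x) = P(\lambda_1-(\lambda_1-\lambda_d)x)/P(\lambda_d)$, the quantity $\frac{\vv^\top\mA^2\vv}{\vv^\top\vv}-\lambda_d^2$ decomposes into the sum of two ratios $\sum_i u_i^2 (1-x_i)^\alpha Q^2(x_i)/\sum_i u_i^2 Q^2(x_i)$ with $\alpha \in \{1,2\}$, each of which is precisely the form bounded by the KW polynomial lemma (Lemma~\ref{lem:poly_approx}) with the \emph{same one-sided} construction. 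The bad indices for both pieces are simply those near $\lambda_{\max}$ (small $x_i$), exactly the regime the polynomial controls, so no two-sided Chebyshev is required; the probabilistic step (Lemma~\ref{lem:prob}) then controls $\sum_{i<d} u_i^2/u_d^2$ as in the classical argument, giving the claimed constant.

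It is also worth knowing that your worry about the non-PSD regime is well-placed even for the paper's own write-up: the bound on the second ratio multiplies by the coefficient $\tfrac{2\lambda_d}{\lambda_1-\lambda_d}$, whose sign matches $\lambda_d$, and the final square-root step requires $\lambda_d + \rho(\lambda_1-\lambda_d) \geq 0$. Both are automatic in the only place the lemma is invoked, namely on the PSD matrix $\mA - \hat{\lambda}_{\min}\mI$ inside Subroutine~\ref{alg:TRSolver}, which matches your observation. But if one wants to prove the statement as written (and in particular get the one-sided polynomial argument to go through without restricting to $\mu \geq |\lambda_{\min}|$), the factorization above is the decisive step; a direct treatment of $\lambda_i^2$ cannot reach it.
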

\begin{proof}
We aim to upper bound $\min_{\vv \in \mathcal{K}_t(\mA,\vu)} \frac{\|\mA \vv\|^2}{\|\vv\|^2} = \min_{\vv \in \mathcal{K}_t(\mA,\vu)} \frac{\vv^\top \mA^2 \vv}{\vv^\top \vv}$. 
Since $\vv \in \mathcal{K}_t(\mA,\vu)$, we let $\vv = P(\mA) \vu$, where $P$ can be any non-zero polynomial of degree less than or equal to $t-1$. Also, let $\lambda_1,\lambda_2,\dots,\lambda_d$ denote the eigenvalues of $\mA$ in decreasing order. Then: 
\begin{equation*}
    \frac{\vv^\top \mA^2 \vv}{\vv^\top \vv} = \frac{\sum_{i=1}^d u_i^2 \lambda_i^2 P^2(\lambda_i)}{\sum_{i=1}^d u_i^2 P^2(\lambda_i)}, 
\end{equation*}
and we further have:  
\begin{equation*}
    \frac{1}{(\lambda_1 - \lambda_d)^2}\left(\frac{\vv^\top \mA^2 \vv}{\vv^\top \vv} - \lambda_d^2\right) = \frac{1}{(\lambda_1 - \lambda_d)^2} \frac{\sum_{i=1}^d u_i^2 (\lambda_i^2 - \lambda_d^2) P^2(\lambda_i)}{\sum_{i=1}^d u_i^2 P^2(\lambda_i)}. 
\end{equation*}
Note that for any $i=1,2,\dots,d$, we can write: 
$\lambda^2_i - \lambda_d^2 = (\lambda_i - \lambda_d)^2 + 2\lambda_d (\lambda_i -\lambda_d)$. 
Thus, the above further becomes: 
\begin{equation}\label{eq:two_ratios}
    \frac{\sum_{i=1}^d u_i^2 \left(\frac{\lambda_i - \lambda_d}{\lambda_1 - \lambda_d}\right)^2 P^2(\lambda_i)}{\sum_{i=1}^d u_i^2 P^2(\lambda_i)} +  \frac{2\lambda_d}{\lambda_1 - \lambda_d} \frac{\sum_{i=1}^d u_i^2 \frac{\lambda_i - \lambda_d}{\lambda_1 - \lambda_d} P^2(\lambda_i)}{\sum_{i=1}^d u_i^2 P^2(\lambda_i)}. 
\end{equation}
Now define the auxiliary variables $x_i = \frac{\lambda_1 - \lambda_i}{\lambda_1 - \lambda_d} \in (0,1]$ for $i\in \{1,2,\dots,d\}$ and $Q(x) = \frac{P(\lambda_1 - (\lambda_1 - \lambda_d)x)}{P(\lambda_d)}$. It is easy to see that $x_d = 1$ and $Q$ is a polynomial of degree less or equal to $t-1$ satisfying $Q(1) = 1$. Moreover, under this notation, we have $\frac{\lambda_i - \lambda_d}{\lambda_1 - \lambda_d} = 1-x_i$, $x_d = 1$, and $P(\lambda_i) = Q(x_i) P(\lambda_d)$. 
Thus, for any $\rho < 1$, we can upper bound the first term in \eqref{eq:two_ratios} as: 
\begin{align*}
    \frac{\sum_{i=1}^{d} u_i^2 (1-x_i)^2 Q^2(x_i)}{\sum_{i=1}^d u_i^2 Q^2(x_i)}  &= \rho^2 + \frac{\sum_{i=1}^d u_i^2 ((1-x_i)^2-\rho^2) Q^2(x_i)}{\sum_{i=1}^d u_i^2 Q^2(x_i)} \\
     &=  \rho^2 + \frac{\sum_{i=1}^{d-1} u_i^2 ((1-x_i)^2-\rho^2) Q^2(x_i) - \rho^2 u_d^2}{\sum_{i=1}^d u_i^2 Q^2(x_i)} \\
    & \leq \rho^2 + \frac{\sum_{i=1}^{d-1} 2u_i^2 (1-\rho-x_i) Q^2(x_i) - \rho^2 u_d^2}{\sum_{i=1}^d u_i^2 Q^2(x_i)} \\
    & \leq \rho^2 + \frac{2\sum_{i=1}^{d-1} u_i^2 \max_{0\leq x \leq 1-\rho }\{(1-\rho-x)Q^2(x)\} - \rho^2 u_d^2}{ u_d^2}, 
\end{align*}
where we used $x_d = 1$ and $Q(1) = 1$ in the second equality, the first inequality is due to the fact that $(1-x_i)^2 - \rho^2 = (1+\rho-x_i)(1-\rho-x_i)\leq 2(1-\rho-x_i)$, and the last inequality is because $x_i \in [0,1]$ and $(1-\rho-x_i) Q^2(x_i) \leq 0$ when $x_i > 1-\rho$. Similarly, we can upper bound the second term in 
\eqref{eq:two_ratios} as:
\begin{equation*}
    \frac{2\lambda_d}{\lambda_1 - \lambda_d} \frac{\sum_{i=1}^{d} u_i^2 (1-x_i) Q^2(x_i)}{\sum_{i=1}^d u_i^2 Q^2(x_i)}\leq  \frac{2\lambda_d}{\lambda_1 - \lambda_d} \rho + \frac{2\lambda_d}{\lambda_1-\lambda_d} \frac{\sum_{i=1}^{d-1} u_i^2\max_{0\leq x \leq 1-\rho }\{(1-\rho-x)Q^2(x)\}   -\rho u_d^2}{u_d^2}. 
\end{equation*}
We recall the following two helper lemmas from \cite[Theorem 4.2]{kuczynski1992estimating}. 
\begin{lemma}\label{lem:poly_approx}
    There exists a polynomial $T$ of degree $t-1$ and $T(1) = 1$ such that: 
    \begin{equation*}
        \max_{0 \leq x \leq 1-\rho} \{(1-\rho-x)T^2(x)\} \leq 4\rho \Bigl( \frac{1-\sqrt{\rho}}{1+\sqrt{\rho}}\Bigr)^{2t-1} \Bigl( 1- \Bigl( \frac{1-\sqrt{\rho}}{1+\sqrt{\rho}}\Bigr)^{2t-1} \Bigr)^{-2} =  \frac{4\rho \gamma}{(1-\gamma)^2},
    \end{equation*}
    where we define $\gamma :=   \left( \frac{1-\sqrt{\rho}}{1+\sqrt{\rho}}\right)^{2t-1}$ to simplify the notation.
\end{lemma}

\begin{lemma}\label{lem:prob}
    Let $\vu \in \reals^d$ be a random vector drawn from the unit sphere and let $\vu_i$ denote its $i$-th coordinate ($1 \leq i \leq d$). Then $\Pr(\sum_{i=1}^{d-1} u_i^2 > c u_d^2) \leq 0.824\sqrt{\frac{d}{1+c}}$. 
\end{lemma}
Recall that $P$ can be any non-zero polynomial of degree less than or equal to $t-1$. Specifically, we choose $P(\lambda) = T(\frac{\lambda_1 - \lambda}{\lambda_1-\lambda_d})$ with the polynomial $T$ given in Lemma~\ref{lem:poly_approx}, which corresponds to $Q(x) = T(x)$. Thus, we obtain: 
\begin{equation}\label{eq:before_probability}
    \min_{\vv\in \mathcal{K}_t(\mA,\vu)}\frac{1}{(\lambda_1 - \lambda_d)^2}\left(\frac{\vv^\top \mA^2 \vv}{\vv^\top \vv} - \lambda_d^2\right) \leq \rho^2 + \frac{2\lambda_d}{\lambda_1 - \lambda_d} \rho + \frac{2\lambda_1}{\lambda_1 - \lambda_d}\frac{\frac{8\rho\gamma}{(1-\gamma)^2}\sum_{i=1}^{d-1}u_i^2 - \rho^2 u_d^2}{u_d^2}.
\end{equation}
Moreover, by Lemma~\ref{lem:prob}, it holds that: 
\begin{equation*}
    \Pr\left(\frac{8\rho\gamma}{(1-\gamma)^2}\sum_{i=1}^{d-1}u_i^2 > \rho^2 u_d^2\right) \leq 0.824 \sqrt{\frac{d}{1+\frac{\rho(1-\gamma)^2}{8\gamma}}} \leq 0.824 \sqrt{\frac{8d\gamma}{\rho}} \leq 2.34\sqrt{\frac{d}{\rho}} e^{-\sqrt{\rho}(2t-1)},
\end{equation*}
where we used the fact that $1+\frac{\rho(1-\gamma)^2}{8\gamma} = 1+\frac{\rho}{8\gamma} - \frac{\rho}{4} + \frac{\rho\gamma}{8} \geq \frac{3}{4} + \frac{\rho}{8\gamma} \geq \frac{\rho}{8\gamma}$ in the second inequality and the fact that $\sqrt{\gamma} \leq e^{-\sqrt{\rho}(2t-1)}$ in the last inequality. Finally, we note that when $\frac{8\rho\gamma}{(1-\gamma)^2}\sum_{i=1}^{d-1}u_i^2 \leq \rho^2 u_d^2$ holds, we obtain from \eqref{eq:before_probability} that: 
\begin{align*}
    \min_{\vv\in \mathcal{K}_t(\mA,\vu)}\frac{1}{(\lambda_1 - \lambda_d)^2}\left(\frac{\vv^\top \mA^2 \vv}{\vv^\top \vv} - \lambda_d^2\right) &\leq \rho^2 + \frac{2\lambda_d}{\lambda_1 - \lambda_d} \rho \\
    \Leftrightarrow \quad \min_{\vv\in \mathcal{K}_t(\mA,\vu)}\frac{\vv^\top \mA^2 \vv}{\vv^\top \vv}   &\leq \rho^2(\lambda_1 - \lambda_d)^2 + 2\lambda_d\rho(\lambda_1 - \lambda_d) + \lambda_d^2 \\
    \Leftrightarrow \quad \min_{\vv \in \mathcal{K}_t(\mA,\vu)} \frac{\|\mA \vv\|}{\|\vv\|} &\leq \lambda_d + \rho (\lambda_1 - \lambda_d). 
\end{align*}
This completes the proof. 
\end{proof}

\begin{subroutine}[!t]\small
  \caption{$\mathsf{MinEvec}(\mA;\delta,q)$}\label{alg:lanczos}
  \begin{algorithmic}[1]
      \State \textbf{Input:} $\mA \in \mathbb{S}^d$, $\delta>0$, $q\in (0,1)$, an upper bound $B$ on $\lambda_{\max}(\mA) - \lambda_{\min}(\mA)$
      \State \textbf{Initialize:} sample $\vv_1\in \reals^d$ uniformly from the unit sphere, $\beta_1 \leftarrow 0$, $\vv_0\leftarrow 0$
      \State Set $N_1 \leftarrow \lceil\frac{1}{4}\sqrt{\frac{2B}{\delta}}\log(\frac{11d}{q^2})+\frac{1}{2} \rceil$
      \For{$k=1,\dots,N_1$ \tikzmark{top}} \label{line:lanczos_start_minevec}
      \State Set $\vw_k \leftarrow \mA \vv_k-\beta_k \vv_{k-1}$ 
      \State Set $\alpha_k \leftarrow \langle \vw_k,\vv_k \rangle $ and $\vw_k \leftarrow \vw_k-\alpha_k\vv_k$
      \State Set $\beta_{k+1} \leftarrow \|\vw_k\|$ and $\vv_{k+1}\leftarrow \vw_k/\beta_{k+1}$
      \EndFor \label{line:lanczos_end_minevec} \tikzmark{bottom}
    \State Form a tridiagonal matrix $\mT \leftarrow \mathsf{tridiag}(\beta_{2:N_1},\alpha_{1:N_1},\beta_{2:N_1})$
    \State \Comment{Use the tridiagonal structure to compute the minimum eigenvalue of $\mT$}\tikzmark{right}
    \State Compute $\bar{\lambda}_{\min} \leftarrow \mathsf{MinEig}(\mT)$ and set $\hat{\lambda}_{\min} \leftarrow \bar{\lambda}_{\min} - \frac{\delta}{2}$
    \If{$\hat{\lambda}_{\min} \geq 0$} \quad\Comment{Case (a)}
    \State Set $\hat{\vv}_{\min} \leftarrow 0 $ and return $(\hat{\lambda}_{\min},\hat{\vv}_{\min})$
    \Else \quad\Comment{Case (b)}
    \State Set $N_2 \leftarrow \lceil\frac{1}{4}\sqrt{\frac{2B}{\delta}}\log(\frac{44d B}{q^2 \delta})+\frac{1}{2} \rceil$
    \For{$k=N_1+1,\dots,N_2$ \tikzmark{top2}} \label{line:lanczos_start_minevec_2}
      \State Set $\vw_k \leftarrow \mA \vv_k-\beta_k \vv_{k-1}$ 
      \State Set $\alpha_k \leftarrow \langle \vw_k,\vv_k \rangle $ and $\vw_k \leftarrow \vw_k-\alpha_k\vv_k$
      \State Set $\beta_{k+1} \leftarrow \|\vw_k\|$ and $\vv_{k+1}\leftarrow \vw_k/\beta_{k+1}$
      \EndFor \tikzmark{bottom2} \label{line:lanczos_end_minevec_2}
    \State Form a tridiagonal matrix $\mT' \leftarrow \mathsf{tridiag}(\beta_{2:N_2},\alpha_{1:N_2},\beta_{2:N_2})$
    \State Form a pentadiagonal matrix $\mM \leftarrow (\mT' -\hat{\lambda}_{\min}\mI)^2 + \beta_{N_2+1}^2 \ve_{N_2}\ve_{N_2}^\top$
    \State \Comment{Use the pentadiagonal structure to compute the minimum eigenvector of $\mM$}
    \State Compute $\tilde{\vz}_{\min} \leftarrow \mathsf{MinEvec}(\mM)$ and set $\hat{\vv}_{\min} \leftarrow \sum_{k=1}^{N_2} \tilde{z}_{\min}^{(k)} \vv_k$
    \State Return $(\hat{\lambda}_{\min}, \hat{\vv}_{\min})$
    \EndIf%
  \end{algorithmic}%
  \AddNote{top}{bottom}{right}{\color{comment}\textit{\quad Lanczos iteration}}
  \AddNote{top2}{bottom2}{right}{\color{comment}\textit{\quad Lanczos iteration}}
\end{subroutine}

Now we are ready to describe the implementation of $\mathsf{MinEvec}$ in Subroutine~\ref{alg:lanczos}, which consists of two stages. 
In the first stage, we run the Lanczos method to obtain a good approximation of $\lambda_{\min}(\mA)$. Specifically, recall that $B$ is an upper bound on $\lambda_{\max}(\mA) - \lambda_{\min}(\mA)$. We initiate the process with a random vector $\vv_1$ uniformly drawn from the unit sphere and execute the Lanczos method for $N_1 =  \lceil\frac{1}{4}\sqrt{\frac{2B}{\delta}}\log(\frac{11d}{q^2})+\frac{1}{2} \rceil$ iterations (see Lines~\ref{line:lanczos_start_minevec} to~\ref{line:lanczos_end_minevec}). It is known that the Lanczos vectors $\{\vv_k\}_{k=1}^{N_1}$ form an orthonormal basis of the Krylov subspace $\mathcal{K}_{N_1}(\mA,\vv_1)$. Moreover, if we define $\mV^{(N_1)} = [\vv_1,\dots,\vv_{N_1}] \in \reals^{d \times N_1}$, then $(\mV^{(N_1)})^\top \mA \mV^{(N_1)}$ is a tridiagonal matrix $\mT \in \reals^{N_1 \times N_1}$ given by: 
$$\mT = \begin{bmatrix}
    \alpha_1 & \beta_2 & \\
    \beta_2  & \alpha_2 & \beta_3 \\ 
             & \beta_3  & \ddots & \ddots \\
             & & \ddots & \ddots & \beta_{N_1}\\ 
             & & & \beta_{N_1} & \alpha_{N_1} 
\end{bmatrix}.$$ 
Due to the tridiagonal structure, the eigenvalues of $\mT$ can be computed in $O(N_1)$ time. In particular, we compute its minimum eigenvalue $\bar{\lambda}_{\min}$, which satisfies:
\begin{equation}\label{eq:def_lambda_bar_min}
    \bar{\lambda}_{\min} = \min_{\vz \in \reals^{N_1}} \frac{\vz^\top \mT \vz}{\vz^\top \vz} = \min_{\vz \in \reals^{N_1}} \frac{\vz^\top (\mV^{(N_1)})^\top \mA \mV^{(N_1)} \vz}{\vz^\top \vz} = \min_{\vv \in \mathcal{K}_{N_1}(\mA,\vv_1)} \frac{\vv^\top \mA \vv}{\vv^\top \vv}. 
\end{equation}
Then we set $\hat{\lambda}_{\min} \leftarrow \bar{\lambda}_{\min} - \frac{\delta}{2}$. As we shall prove in Proposition~\ref{prop:minevec_correctness}, we have $\hat{\lambda}_{\min} \leq \lambda_{\min}(\mA) \leq  \hat{\lambda}_{\min}+ \frac{\delta}{2}$ with probability at least $1-\frac{q}{2}$.  

In the second stage, we compute the output $\hat{\vv}_{\min}$ in the definition of the $\mathsf{MinEvec}$ oracle. We distinguish two cases depending on the sign of $\hat{\lambda}_{\min}$. 
\begin{itemize}
    \item If $\hat{\lambda}_{\min} \geq 0$, we simply set $\hat{\vv}_{\min} = 0$ and return the pair $(\hat{\lambda}_{\min},\hat{\vv}_{\min})$. 
    \item Otherwise, if $\hat{\lambda}_{\min} < 0 $, we continue to run the Lanczos method for $N_2 = \lceil\frac{1}{4}\sqrt{\frac{2B}{\delta}}\log(\frac{44d B}{q^2 \delta})+\frac{1}{2} \rceil$ iterations (see Lines~\ref{line:lanczos_start_minevec_2} to~\ref{line:lanczos_end_minevec_2}). Define  $\mV^{(N_2)} = [\vv_1,\dots,\vv_{N_2}] \in \reals^{d \times N_2}$, then from the Lanczos iteration it holds that: 
    \begin{equation*}
        \mA \mV^{(N_2)} =  \mV^{(N_2)} \mT' + \beta_{N_2 +1 } \vv_{N_2+1} \ve_{N_2}^\top,
    \end{equation*}
    where $\mT'$ is a tridiagonal matrix given by $\mathsf{tridiag}(\beta_{2:N_2},\alpha_{1:N_2},\beta_{2:N_2})$ and $\ve_{N_2}$ is the $N_2$-th standard unit vector. Therefore, one can show that $(\mV^{(N_2)})^\top (\mA-\hat{\lambda}_{\min}\mI)^\top (\mA-\hat{\lambda}_{\min}\mI) \mV^{(N_2)}$ is a pentadiagonal matrix $\mM$ given by $(\mT' -\hat{\lambda}_{\min}\mI)^2 + \beta_{N_2+1}^2 \ve_{N_2}\ve_{N_2}^\top$. Similarly, we can compute the minimum eigenvector $ \tilde{\vz}_{\min}$ of $\mM$ in $\bigO(N_2)$ time. We further define $\hat{\vv}_{\min} = \mV^{(N_2)}\tilde{\vz}_{\min} $ and it satisfies:
    \begin{equation}\label{eq:def_hatv_min}
        \hat{\vv}_{\min} = \argmin_{\vv \in \mathcal{K}_{N_2}(\mA,\vv_1)} \frac{\vv^\top (\mA-\hat{\lambda}_{\min}\mI)^\top (\mA-\hat{\lambda}_{\min}\mI) \vv}{\vv^\top \vv} = \argmin_{\vv \in \mathcal{K}_{N_2}(\mA,\vv_1)} \frac{\|(\mA-\hat{\lambda}_{\min}\mI)\vv\|}{\|\vv\|}. 
    \end{equation}
    Finally, we return the pair $(\hat{\lambda}_{\min},\hat{\vv}_{\min})$.  
\end{itemize}

In the following proposition, we will prove that the pair $(\hat{\lambda}_{\min},\hat{\vv}_{\min})$ returned by Subroutine~\ref{alg:lanczos} satisfies the conditions specified in Definition~\ref{def:minevec}. 
\begin{proposition}\label{prop:minevec_correctness}
    With probability at least $1-q$, Subroutine~\ref{alg:lanczos} successfully implements the $\mathsf{MinEvec}$ oracle defined in Definition~\ref{def:minevec} and the total number of matrix-vector products is bounded by $\lceil\frac{1}{4}\sqrt{\frac{2B}{\delta}}\log(\frac{44d B}{q^2 \delta})+\frac{1}{2} \rceil$. 
\end{proposition}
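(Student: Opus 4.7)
The plan is to verify the correctness of the output $(\hat{\lambda}_{\min},\hat{\vv}_{\min})$ via two probabilistic events $\mathcal{E}_1,\mathcal{E}_2$ coming from separate applications of the Lanczos convergence bounds in Lemmas~\ref{prop:lanczos_estimate} and~\ref{lem:lanczos_new}, and then to bound the matrix-vector-product count by simply counting Lanczos iterations. The main obstacle to anticipate is that the shift $\hat{\lambda}_{\min}$ appearing in the residual $\|(\mA-\hat{\lambda}_{\min}\mI)\hat{\vv}_{\min}\|$ is itself a random function of $\vv_1$, so Lemma~\ref{lem:lanczos_new} cannot be invoked naively on the matrix $\mA-\hat{\lambda}_{\min}\mI$; I will resolve this via a shift-invariance argument combined with a triangle inequality.

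For the first event $\mathcal{E}_1$, the tridiagonal identity~\eqref{eq:def_lambda_bar_min} gives $\bar{\lambda}_{\min}=\min_{\vv\in\mathcal{K}_{N_1}(\mA,\vv_1)}\frac{\vv^\top\mA\vv}{\vv^\top\vv}$, so the Courant--Fischer characterization yields $\bar{\lambda}_{\min}\geq\lambda_{\min}(\mA)$ deterministically. Applying Lemma~\ref{prop:lanczos_estimate} with $\rho=\delta/(2B)$ (so that $\rho(\lambda_{\max}(\mA)-\lambda_{\min}(\mA))\leq\delta/2$), the matching upper bound $\bar{\lambda}_{\min}\leq\lambda_{\min}(\mA)+\delta/2$ holds with failure probability at most $1.648\sqrt{d}\,e^{-\sqrt{\rho}(2N_1-1)}$; the prescribed value of $N_1$ is calibrated precisely so that this is at most $q/2$. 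On $\mathcal{E}_1$, $\hat{\lambda}_{\min}=\bar{\lambda}_{\min}-\delta/2$ satisfies $\hat{\lambda}_{\min}\leq\lambda_{\min}(\mA)\leq\hat{\lambda}_{\min}+\delta/2$, which verifies Case~(a) of Definition~\ref{def:minevec} when $\hat{\lambda}_{\min}\geq 0$ and the first requirement of Case~(b) when $\hat{\lambda}_{\min}<0$ (using $\delta/2\leq\delta$).

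For the residual bound in Case~(b), the key observation is shift-invariance of the Krylov subspace, $\mathcal{K}_{N_2}(\mA,\vv_1)=\mathcal{K}_{N_2}(\mA-\lambda_{\min}(\mA)\mI,\vv_1)$, combined with the deterministic triangle inequality
\[
\min_{\vv\in\mathcal{K}_{N_2}(\mA,\vv_1)}\frac{\|(\mA-\hat{\lambda}_{\min}\mI)\vv\|}{\|\vv\|}\leq \min_{\vv\in\mathcal{K}_{N_2}(\mA,\vv_1)}\frac{\|(\mA-\lambda_{\min}(\mA)\mI)\vv\|}{\|\vv\|}+|\lambda_{\min}(\mA)-\hat{\lambda}_{\min}|.
\]
On $\mathcal{E}_1$, the second term is at most $\delta/2$. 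For the first term, I apply Lemma~\ref{lem:lanczos_new} to the PSD matrix $\mA':=\mA-\lambda_{\min}(\mA)\mI$ (spectral range at most $B$) with $\rho=\delta/(2B)$, obtaining the bound $\delta/2$ with failure probability at most $2.34\sqrt{d/\rho}\,e^{-\sqrt{\rho}(2N_2-1)}$, which the prescribed $N_2$ makes at most $q/2$; call the resulting good event $\mathcal{E}_2$. By the Lanczos three-term recurrence, $\hat{\vv}_{\min}=\mV^{(N_2)}\tilde{\vz}_{\min}$ is a unit-norm element of $\mathcal{K}_{N_2}(\mA,\vv_1)$, and by~\eqref{eq:def_hatv_min} it attains the minimizing residual, so on $\mathcal{E}_1\cap\mathcal{E}_2$ we conclude $\|(\mA-\hat{\lambda}_{\min}\mI)\hat{\vv}_{\min}\|\leq\delta$, matching Case~(b).

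A union bound gives $\Pr(\mathcal{E}_1\cap\mathcal{E}_2)\geq 1-q$. Since each Lanczos iteration uses exactly one matrix-vector product and the worst-case cost is the $N_2$ iterations executed in Case~(b), the total matrix-vector count matches the claimed bound. The hardest part of the argument is the shifted-residual analysis, which is handled by the shift-invariance-plus-triangle-inequality trick above; beyond that, the only remaining work is routine arithmetic verifying that the explicit choices of $N_1$ and $N_2$ meet the desired failure-probability thresholds.
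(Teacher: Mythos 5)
Your proposal is correct and, in one important respect, is more careful than the paper's own argument. The overall skeleton is identical: a two-stage union bound, with the first event $\mathcal{E}_1$ (from Lemma~\ref{prop:lanczos_estimate} applied with $\rho=\delta/(2B)$ over $N_1$ Lanczos steps) giving $\hat{\lambda}_{\min}\le\lambda_{\min}(\mA)\le\hat{\lambda}_{\min}+\delta/2$, and the second event $\mathcal{E}_2$ (from Lemma~\ref{lem:lanczos_new}) controlling the residual $\|(\mA-\hat{\lambda}_{\min}\mI)\hat{\vv}_{\min}\|$ after $N_2$ steps, with the matrix-vector count dominated by $N_2$. Where you diverge is in the handling of the residual. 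The paper invokes Lemma~\ref{lem:lanczos_new} directly ``with the matrix $\mA-\hat{\lambda}_{\min}\mI$,'' but $\hat{\lambda}_{\min}$ is a measurable function of the random Lanczos start $\vv_1$, so the matrix being fed into the lemma is not fixed --- the lemma's probability bound is stated only for a deterministic input matrix, and plugging in a data-dependent shift is not formally licensed without an additional uniformity or monotonicity argument. Your fix is cleaner: apply Lemma~\ref{lem:lanczos_new} to the fixed PSD matrix $\mA-\lambda_{\min}(\mA)\mI$ (using that the Krylov subspace is shift-invariant and that $\hat{\vv}_{\min}$ minimizes the residual by \eqref{eq:def_hatv_min}), which bounds $\min_{\vv}\|(\mA-\lambda_{\min}(\mA)\mI)\vv\|/\|\vv\|\le\delta/2$ on $\mathcal{E}_2$; then the triangle inequality transfers this to the data-dependent shift at a cost of $|\lambda_{\min}(\mA)-\hat{\lambda}_{\min}|\le\delta/2$ on $\mathcal{E}_1$, yielding the same final bound $\delta$. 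Both routes arrive at the same constants and the same $N_1,N_2$; yours has the advantage of never evaluating a probabilistic lemma at a random matrix, so it closes a gap the paper leaves implicit.
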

\begin{proof}
    To begin with, we prove that $\hat{\lambda}_{\min} \leq \lambda_{\min}(\mA) \leq  \hat{\lambda}_{\min}+ \frac{\delta}{2}$ with probability at least $1-\frac{q}{2}$. By using the property of $\bar{\lambda}_{\min}$ in \eqref{eq:def_lambda_bar_min},we have $\bar{\lambda}_{\min} \geq \lambda_{\min}(\mA)$. Furthermore, by applying Lemma~\ref{prop:lanczos_estimate} with $\rho = \frac{\delta}{2B}$, we obtain that: 
    \begin{equation*}
    \Pr\Bigl(\bar{\lambda}_{\min} \geq \lambda_{\min}(\mA) + \frac{\delta}{2B} (\lambda_{\max}(\mA) - \lambda_{\min}(\mA))\Bigr) \leq 1.648 \sqrt{d} e^{-\sqrt{\frac{\delta}{2B}}(2N_1-1)} \leq \frac{q}{2}. 
    \end{equation*}
    Since $B \geq \lambda_{\max}(\mA) - \lambda_{\min}(\mA)$, this implies that, with probability at least $1-\frac{q}{2}$, we have $\bar{\lambda}_{\min} \leq \lambda_{\min}(\mA) + \frac{\delta}{2B} (\lambda_{\max}(\mA) - \lambda_{\min}(\mA)) \leq  \lambda_{\min}(\mA) + \frac{\delta}{2}$, leading to $\lambda_{\min}(\mA) \leq \bar{\lambda}_{\min} \leq  \lambda_{\min}(\mA) + \frac{\delta}{2}$. Since $\hat{\lambda}_{\min} = \bar{\lambda}_{\min} - \frac{\delta}{2}$, we get $\hat{\lambda}_{\min} \leq \lambda_{\min}(\mA) \leq  \hat{\lambda}_{\min}+ \frac{\delta}{2}$ with probability at least $1-\frac{q}{2}$. 

    Hence, in the first case where $\hat{\lambda}_{\min} \geq 0$, we have $ \lambda_{\min}(\mA) \geq \hat{\lambda}_{\min} \geq 0$ and thus the condition in Definition~\ref{def:minevec} is satisfied with probability at least $1-\frac{q}{2}$. In the second case where $\hat{\lambda}_{\min} <0$, it still holds that $\hat{\lambda}_{\min} + \frac{\delta}{2} \geq \lambda_{\min}(\mA) \geq \hat{\lambda}_{\min}$ with probability at least $1-\frac{q}{2}$. Moreover, using the property of $\hat{\vv}_{\min}$ in \eqref{eq:def_hatv_min}, by applying Lemma~\ref{lem:lanczos_new} with the matrix $\mA - \hat{\lambda}_{\min}\mI$ and $\rho = \frac{\delta}{2B}$, we obtain that:
    \begin{equation*}
    \Pr\Bigl(\|(\mA - \hat{\lambda}_{\min}\mI)\hat{\vv}_{\min}\| \geq \lambda_{\min}(\mA) - \hat{\lambda}_{\min} + {\textstyle\frac{\delta}{2B}} (\lambda_{\max}(\mA) - \lambda_{\min}(\mA))\Bigr) \leq 2.34 \sqrt{\frac{2Bd}{\delta}} e^{-\sqrt{\frac{\delta}{2B}}(2N_2-1)} \leq \frac{q}{2},
    \end{equation*}
    where we used $N_2 = \lceil\frac{1}{4}\sqrt{\frac{2B}{\delta}}\log(\frac{44d B}{q^2 \delta})+\frac{1}{2} \rceil$ in the last inequality. Using the union bound, with probability at least $1-q$, we have: 
    \begin{equation*}
        \hat{\lambda}_{\min} + \frac{\delta}{2} \geq \lambda_{\min}(\mA) \geq \hat{\lambda}_{\min} \; \text{and} \; \|(\mA - \hat{\lambda}_{\min}\mI)\hat{\vv}_{\min}\| \leq \lambda_{\min}(\mA) - \hat{\lambda}_{\min} + \frac{\delta}{2B} (\lambda_{\max}(\mA) - \lambda_{\min}(\mA)).  
    \end{equation*}
    Together, these two inequalities imply that $\|(\mA - \hat{\lambda}_{\min}\mI)\hat{\vv}_{\min}\| \leq \frac{\delta}{2} + \frac{\delta}{2} = \delta$. Hence, we conclude that all the conditions in Definition~\ref{def:minevec} are satisfied. 
\end{proof}

\subsection{Reducing Gradient Norm for Constrained Convex Optimization}\label{appen:gradient_norm}
Recall the general constrained problem in~\eqref{eq:constrained}. In this section, we describe the $\mathsf{FISTA\mathrm{+}SFG}$ algorithm in Proposition~\ref{prop:fista_sfg}, which consists of a total of $2N$ iterations. In the first $N$ iterations,  we run the FISTA algorithm proposed in~\cite{beck2009fast} with the initialization $\vx_0 = \vy_0  \in Q$ and $t_0 = 1$. It follows the following update: for any $k \in \{0,1,\dots,N-1\}$,  
\begin{equation*}
    \vx_{k+1} = \Pi_{Q}\Bigl(\vy_k - \frac{1}{L_g}\nabla g(\vy_k)\Bigr),\;t_{k+1} = \frac{1+\sqrt{1+4t_k^2}}{2},\;\vy_{k+1} = \vx_{k+1} + \frac{t_k-1}{t_{k+1}}(\vx_{k+1}-\vx_k). 
\end{equation*}
It is known that FISTA achieves the convergence rate: 
\begin{equation}\label{eq:fista_convergence}
    f(\vx_N) - f^* \leq \frac{2L_g\|\vx_0-\vx^*\|^2}{(N+1)^2}.
\end{equation} 
For the second $N$ iterations, we switch to the Super FISTA-G method from \cite{kim2023time}, initializing with $\tilde{\vx}_0 = \tilde\vy_0 = \vx_N$. The updates for Super FISTA-G are given by: 
\begin{align*}
    \tilde\vx_{k+1} &= \Pi_{Q}\Bigl(\tilde\vy_k - \frac{1}{4L_g}\nabla f(\tilde\vy_k)\Bigr),  \quad \forall k \in \{0,\dots,N-1\},\\
    \tilde\vy_{k+1} &=
    \begin{cases}
        \tilde\vx_{k+1} + {\textstyle\frac{(N-k)(2N-2k-3)}{(N-k+2)(2N-2k-1)}}(\tilde\vx_{k+1}-\tilde\vx_k) + {\textstyle\frac{(4N-4k-5)(2N-2k-3)}{6(N-k+2)(2N-2k-1)}}(\tilde\vx_{k+1}-\tilde\vy_k), & \text{if } k \in \{0,\dots,N-3\},\\
        \tilde\vx_{N-1} + \frac{3}{10}(\tilde\vx_{N-1} - \tilde\vx_{N-2}) + \frac{3}{40}(\tilde\vx_{N-1}- \tilde\vy_{N-2}), & \text{if } k = N-2.
    \end{cases}
\end{align*}
As shown in \cite{kim2023time}, this method achieves the following convergence bound:  
\begin{equation}\label{eq:SFG_convergence}
    \min_{\vu \in \mathcal{N}_Q(\tilde{\vx}_N)}\|\nabla g(\tilde{\vx}_N) + \vu\| \leq \sqrt{\frac{50 L_g (f(\tilde{\vx}_0) - f^*)}{(N+1)(N+2)}}.
\end{equation}
Combining \eqref{eq:fista_convergence} and~\eqref{eq:SFG_convergence}, we obtain  
\begin{equation*}
    \min_{\vu \in \mathcal{N}_Q(\tilde{\vx}_N)}\|\nabla g(\tilde{\vx}_N) + \vu\| \leq \sqrt{\frac{50 L_g (f({\vx}_N) - f^*)}{(N+1)(N+2)}} \leq \frac{10 L_g\|\vx_0-\vx^*\|}{(N+1)^2}.
\end{equation*}
Hence, to satisfy $ \min_{\vu \in \mathcal{N}_Q(\tilde{\vx}_N)}\|\nabla g(\tilde{\vx}_N) + \vu\| \leq \delta$, we can set $N = \sqrt{\frac{10L_g \|\vx_0-\vx^*\|}{\delta}}$ and thus the total number of gradient queries are bounded by  $2N =2\sqrt{\frac{10L_g \|\vx_0-\vx^*\|}{\delta}}$. This proves Proposition~\ref{prop:fista_sfg}. 

\section{Implementation of \texorpdfstring{$\mathsf{SEP}$}{SEP}}
\label{appen:SEP}

\begin{subroutine}[!t]\small
  \caption{$\mathsf{SEP}(\mW;q)$}\label{alg:SEP}
  \begin{algorithmic}[1]
      \State \textbf{Input:} $\mW \in \mathbb{S}^d$, $q\in (0,1)$
      \State \textbf{Initialize:} sample $\vv_1\in \reals^d$ uniformly from the unit sphere, $\beta_1 \leftarrow 0$, $\vv_0\leftarrow 0$
      \State Set the number of iterations 
      $N \leftarrow \Bigl\lceil \frac{1}{2}\log\frac{11d}{q^2}+\frac{1}{2}\Bigr\rceil$
      \For{$k=1,\dots,N$ \tikzmark{top}} \label{line:start of Lanczos}
      \State Set $\vw_k \leftarrow \mW \vv_k-\beta_k \vv_{k-1}$ 
      \State Set $\alpha_k \leftarrow \langle \vw_k,\vv_k \rangle $ and $\vw_k \leftarrow \vw_k-\alpha_k\vv_k$
      \State Set $\beta_{k+1} \leftarrow \|\vw_k\|$ and $\vv_{k+1}\leftarrow \vw_k/\beta_{k+1}$
      \EndFor \label{line:end of Lanczos}
    \State Form a tridiagonal matrix $\mT \leftarrow \mathsf{tridiag}(\beta_{2:N},\alpha_{1:N},\beta_{2:N})$ \label{line:tridiagonal}
    \State \Comment{Use the tridiagonal structure to compute eigenvectors of $\mT$}
    \State Compute $(\hat{\lambda}_1,\vz^{(1)}) \leftarrow \mathsf{MaxEvec}(\mT)$ and $(\hat{\lambda}_d, \vz^{(d)}) \leftarrow \mathsf{MinEvec}(\mT)$ \tikzmark{right}
    \State Set $\vu^{(1)} \leftarrow \sum_{k=1}^N z^{(1)}_k\vv_k$ and $\vu^{(d)} \leftarrow \sum_{k=1}^N z^{(d)}_k\vv_k$ \label{line:eigenvs}\tikzmark{bottom}
    \State Set $\gamma \leftarrow \max\{\hat{\lambda}_1,-\hat{\lambda}_d\}/L_1$ 
    \If{$\gamma \leq 1$}
    \State Return $\gamma$ and $\mS = 0$ \hspace{.75em}\Comment{Case I: $\gamma\leq 1$, which implies $\|\mW\|_{\op} \leq 2L_1$}
    \ElsIf{$\hat{\lambda}_1 \geq -\hat{\lambda}_d$}
      \State Return $\gamma$ and $\mS = \frac{1}{L_1}\vu^{(1)}(\vu^{(1)})^\top$ \hspace{.75em}\Comment{Case II: $\gamma> 1$ and $\mS$ defines a separating hyperplane}
    \Else
      \State Return $\gamma$ and $\mS = -\frac{1}{L_1}\vu^{(d)}(\vu^{(d)})^\top$ \hspace{.75em}\Comment{Case II: $\gamma> 1$ and $\mS$ defines a separating hyperplane}
    \EndIf
  \end{algorithmic}
  \AddNote{top}{bottom}{right}{\color{comment}\textit{\quad Lanczos method}}
\end{subroutine}

This section describes the implementation of the $\mathsf{SEP}$ oracle, as defined in Definition~\ref{def:extevec}. As outlined in Section~\ref{subsec:computational}, our approach relies on the Lanczos algorithm with a random start, and the procedure is presented in Subroutine~\ref{alg:SEP}.
Specifically, starting from a random vector $\vv_1 \in \reals^d$ uniformly drawn from the unit sphere, we execute the Lanczos method for $N$ iterations, where $N = \min\Bigl\{\Bigl\lceil \frac{1}{2}\log\frac{11d}{q^2}+\frac{1}{2}\Bigr\rceil, d\Bigr\}$ (see Lines~\ref{line:start of Lanczos} to~\ref{line:end of Lanczos}). It is known that the Lanczos vectors $\{\vv_k\}_{k=1}^N$ form an orthonormal basis of the Krylov subspace $\mathcal{K}_N(\mW,\vv_1) = \mathrm{span}\{\vv_1, \mW\vv_1, \dots,\mW^{N-1}\vv_1\}$. Using this basis, the matrix $\mW$  is represented in the Krylov subspace by a tridiagonal matrix $$\mT = \begin{bmatrix}
    \alpha_1 & \beta_2 & \\
    \beta_2  & \alpha_2 & \beta_3 \\ 
             & \beta_3  & \ddots & \ddots \\
             & & \ddots & \ddots & \beta_{N}\\ 
             & & & \beta_N & \alpha_N 
\end{bmatrix}.$$ 

Due to the tridiagonal structure, the eigenvectors of $\mT$ can be computed in $O(N)$ time. This computation yields two unit vectors, $\vu^{(1)}$ and $\vu^{(d)}$, such that
(see Lines~\ref{line:tridiagonal} to~\ref{line:eigenvs}): 
\begin{equation*}
    \vu^{(1)} = \argmax_{\vu \in \mathcal{K}_N(\mW,\vv_1)} \frac{\vu^\top \mW \vu}{\vu^\top \vu}, \quad \vu^{(d)} = \argmin_{\vu \in \mathcal{K}_N(\mW,\vv_1)} \frac{\vu^\top \mW \vu}{\vu^\top \vu}.
\end{equation*}
We then set $\gamma \leftarrow \max\{\hat{\lambda}_1,-\hat{\lambda}_d\}/L_1$, where $\hat{\lambda}_1 = (\vu^{(1)})^\top \mW \vu^{(1)}$ and $\hat{\lambda}_d = (\vu^{(d)})^\top \mW \vu^{(d)}$.  Now we distinguish two cases based on the value of $\gamma$. 
\begin{itemize}
    \item \textbf{Case I:} If $\gamma <1$, then we set $\mS = 0$ and return $(\gamma, \mS)$. 
    \item \textbf{Case II:} If $\gamma \geq 1$, we proceed with two subcases. If $\hat{\lambda}_1 \geq - \hat{\lambda}_d$, we set $\mS = \frac{1}{L_1} \vu^{(1)} (\vu^{(1)})^\top$. Otherwise, if $-\hat{\lambda}_d \geq \hat{\lambda}_1$, we set $\mS = -\frac{1}{L_1} \vu^{(d)} (\vu^{(d)})^\top$. 
\end{itemize}
In the next proposition, we will prove that the output $(\gamma,\mS)$ satisfy the conditions specified in Definition~\ref{def:extevec} with probability at least $1-q$. 
\begin{proposition}\label{prop:SEP}
With probability at least $1-q$, Subroutine~\ref{alg:SEP} successfully implements the $\mathsf{SEP}$ oracle and the total number of matrix-vector products is bounded by %
$\Bigl\lceil \frac{1}{2}\log\frac{11d}{q^2}+\frac{1}{2}\Bigr\rceil$.
\end{proposition}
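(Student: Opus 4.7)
The plan is to apply Lemma~\ref{prop:lanczos_estimate} twice — once for the maximum and once for the minimum Rayleigh quotient over the Krylov subspace $\mathcal{K}_N(\mW, \vv_1)$ — and then take a union bound. Counting matrix-vector products is immediate from the subroutine: each of the $N$ Lanczos iterations performs a single product $\mW \vv_k$, while the tridiagonal eigendecomposition on $\mT \in \reals^{N\times N}$ and the construction of $\vu^{(1)}, \vu^{(d)}$ as linear combinations of the stored $\{\vv_k\}$ require no further products.

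First I would fix $\rho = 1/4$ in Lemma~\ref{prop:lanczos_estimate}. By the construction of Subroutine~\ref{alg:SEP}, the scalars $\hat{\lambda}_1 = (\vu^{(1)})^\top \mW \vu^{(1)}$ and $\hat{\lambda}_d = (\vu^{(d)})^\top \mW \vu^{(d)}$ coincide respectively with the maximum and minimum Rayleigh quotients over $\mathcal{K}_N(\mW, \vv_1)$. A union bound then shows that with probability at least $1 - 2\cdot 1.648\sqrt{d}\, e^{-(2N-1)/2} \geq 1-q$ (the last inequality holds for the stated $N$ since $3.296^2 < 11$), both of the following hold simultaneously:
\begin{align*}
    \hat{\lambda}_1 &\geq \lambda_{\max}(\mW) - \tfrac{1}{4}\bigl(\lambda_{\max}(\mW) - \lambda_{\min}(\mW)\bigr),\\
    -\hat{\lambda}_d &\geq -\lambda_{\min}(\mW) - \tfrac{1}{4}\bigl(\lambda_{\max}(\mW) - \lambda_{\min}(\mW)\bigr).
\end{align*}
Combining these with $\lambda_{\max}(\mW) - \lambda_{\min}(\mW) \leq 2\|\mW\|_{\op}$ and the identity $\|\mW\|_{\op} = \max\{\lambda_{\max}(\mW), -\lambda_{\min}(\mW)\}$ yields the crucial inequality $\max\{\hat{\lambda}_1, -\hat{\lambda}_d\} \geq \tfrac{1}{2}\|\mW\|_{\op}$, equivalently $\gamma L_1 \geq \tfrac{1}{2}\|\mW\|_{\op}$.

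Next I would verify the two cases of Definition~\ref{def:extevec}. In Case I ($\gamma \leq 1$), the inequality $\gamma L_1 \geq \tfrac{1}{2}\|\mW\|_{\op}$ immediately gives $\|\mW\|_{\op} \leq 2L_1$. In Case II ($\gamma > 1$), the same inequality gives $\|\mW/\gamma\|_{\op} \leq 2L_1$; the bound $\|\mS\|_F \leq 1/L_1$ follows because $\mS = \pm\frac{1}{L_1} \vu\vu^\top$ with $\|\vu\|=1$; and the separating hyperplane property is a short direct computation. For instance, assuming without loss of generality that $\hat{\lambda}_1 \geq -\hat{\lambda}_d$, we have $\gamma = \hat{\lambda}_1/L_1$ and $\mS = \frac{1}{L_1}\vu^{(1)}(\vu^{(1)})^\top$, so $\langle \mS, \mW \rangle = \hat{\lambda}_1/L_1 = \gamma$ while $\langle \mS, \mB \rangle = \frac{1}{L_1}(\vu^{(1)})^\top \mB \vu^{(1)} \leq \|\mB\|_{\op}/L_1 \leq 1$ for any $\mB$ with $\|\mB\|_{\op} \leq L_1$, giving $\langle \mS, \mW - \mB \rangle \geq \gamma - 1$ as required.

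The main subtlety is the choice $\rho = 1/4$. Taking the more natural $\rho = 1/2$ only yields $\max\{\hat{\lambda}_1, -\hat{\lambda}_d\} \geq (\lambda_{\max}(\mW) + \lambda_{\min}(\mW))/2$, which can be arbitrarily small relative to $\|\mW\|_{\op}$ when $\mW$ has eigenvalues of opposite signs. Using $\rho = 1/4$ together with the bound $\lambda_{\max}(\mW) - \lambda_{\min}(\mW) \leq 2\|\mW\|_{\op}$ is precisely what converts the Lanczos guarantee into the multiplicative factor of $2$ tolerated by Definition~\ref{def:extevec}.
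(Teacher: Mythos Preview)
Your proposal is correct and follows essentially the same approach as the paper: apply Lemma~\ref{prop:lanczos_estimate} with $\rho=1/4$, take a union bound to get both Rayleigh-quotient bounds simultaneously, deduce $\|\mW\|_{\op}\leq 2\gamma L_1$, and then verify the two cases of Definition~\ref{def:extevec} with the separating-hyperplane computation. The only cosmetic difference is that the paper first combines the two Lanczos inequalities to obtain $\hat\lambda_1-\hat\lambda_d\geq \tfrac12(\lambda_{\max}(\mW)-\lambda_{\min}(\mW))$ and then bounds $\lambda_{\max}(\mW)$ and $\lambda_{\min}(\mW)$ separately, whereas you invoke $\lambda_{\max}(\mW)-\lambda_{\min}(\mW)\leq 2\|\mW\|_{\op}$ directly; both routes yield the same inequality $\gamma L_1\geq\tfrac12\|\mW\|_{\op}$.
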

\begin{proof}
First, we show that $\|\mW\|_{\op} \leq  2\gamma L_1$ holds with probability at least $1-q$. By using Proposition~\ref{prop:lanczos_estimate}, we have:
\begin{align*}
\Pr\Bigl(\hat{\lambda}_1 \leq \lambda_{\max}(\mW) - \frac{1}{4} (\lambda_{\max}(\mW) - \lambda_{\min}(\mW))\Bigr) &\leq 1.648 \sqrt{d} e^{-\frac{1}{2}(2N-1)} \leq \frac{q}{2}, \\
\Pr\Bigl(\hat{\lambda}_d \geq \lambda_{\min}(\mW) + \frac{1}{4} (\lambda_{\max}(\mW) - \lambda_{\min}(\mW))\Bigr) &\leq 1.648 \sqrt{d} e^{-\frac{1}{2}(2N-1)} \leq \frac{q}{2}.
\end{align*}
Hence, by using the union bound, with probability at least $1-q$, it holds that: 
\begin{equation*}
\hat{\lambda}_1 \geq \lambda_{\max}(\mW) - \frac{1}{4} (\lambda_{\max}(\mW) - \lambda_{\min}(\mW)), \quad 
    \hat{\lambda}_d \leq \lambda_{\min}(\mW) + \frac{1}{4} (\lambda_{\max}(\mW) - \lambda_{\min}(\mW)).
\end{equation*}
Combining these two inequalities yields $\hat{\lambda}_1 - \hat{\lambda}_d \geq \frac{1}{2}(\lambda_{\max}(\mW) - \lambda_{\min}(\mW))$, which implies that: 
\begin{align}
    \lambda_{\max}(\mW) &\leq \hat{\lambda}_1+ \frac{1}{4} (\lambda_{\max}(\mW) - \lambda_{\min}(\mW)) \leq \frac{3}{2}\hat{\lambda}_1 - \frac{1}{2}\hat{\lambda}_d, \label{eq:bound_on_max} \\
    \lambda_{\min}(\mW) &\geq \hat{\lambda}_d - \frac{1}{4} (\lambda_{\max}(\mW) - \lambda_{\min}(\mW)) \geq -\frac{1}{2}\hat{\lambda}_1 + \frac{3}{2} \hat{\lambda}_d. \label{eq:bound_on_min}
\end{align}
Recall that $\gamma = \max\{\hat{\lambda}_1,-\hat{\lambda}_d\}/L_1$, which means that $\max\{\hat{\lambda}_1,-\hat{\lambda}_d\} = \gamma L_1 $. Hence, \eqref{eq:bound_on_max} and \eqref{eq:bound_on_min} further lead to $\lambda_{\max}(\mW) \leq 2\gamma L_1$ and $\lambda_{\min}(\mW) \geq -2\gamma L_1$. Since $\|\mW\|_{\op} = \max\{\lambda_{\max}(\mW),-\lambda_{\min}(\mW)\}$, this further implies that $\|\mW\|_{\op} \leq  2\gamma L_1$.

\looseness = -1
In the following, we assume that $\|\mW\|_{\op} \leq  2\gamma L_1$, which happens with probability $1-q$.
Thus, in \textbf{Case I} where $\gamma <1$, we have $\|\mW\|_{\op} \leq 2L_1$ and  the condition in Definition~\ref{def:extevec} is satisfied  Otherwise, in \textbf{Case II} where $\gamma \geq 1$, note that $\|\mW/\gamma\|_{\op} = \|\mW\|_{\op}/\gamma \leq 2L_1$. Moreover, without loss of generality, assume that $\hat{\lambda}_1 \geq -\hat{\lambda}_d$; we can use the argument when $\hat{\lambda}_1 \leq -\hat{\lambda}_d$. In this case, we have $\gamma = \hat{\lambda}_1/L_1$ and $\mS = \frac{1}{L_1} \vu^{(1)} (\vu^{(1)})^\top$. Since $\vu^{(1)}$ is a unit vector, it is easy to verify that $\|\mS\|_{F} = \frac{1}{L_1}$. Moreover, for any $\mB \in \sS^d$ such that $\|\mB\|_{\op} \leq L_1$, it holds that: 
\begin{equation*}
    \langle \mS, \mW - \mB \rangle  = \frac{1}{L_1} (\vu^{(1)})^\top \mW \vu^{(1)} - \frac{1}{L_1} (\vu^{(1)})^\top \mB \vu^{(1)}. 
\end{equation*}
Note that $\frac{1}{L_1} (\vu^{(1)})^\top \mW \vu^{(1)} = \frac{\hat{\lambda}_1}{L_1} = \gamma$ and $\frac{1}{L_1} (\vu^{(1)})^\top \mB \vu^{(1)} \leq \frac{1}{L_1} \|\mB\|_{\op} \|\vu^{(1)}\| \leq 1$. Thus, we obtain that $\langle \mS, \mW - \mB \rangle \geq \gamma -1$. Hence, we conclude that all 
the conditions in Definition~\ref{def:extevec} are satisfied. 
\end{proof}

\section{Proof of Theorem \texorpdfstring{\ref{thm:computational}}{4.6}}\label{appen:computational}
Having established Corollary~\ref{coro:TRSolver} and Proposition~\ref{prop:SEP}, we proceed to analyze the total computational cost of our proposed Algorithm~\ref{alg:conversion}.
Recall that in each iteration, we have one call to the $\mathsf{TRSolver}$ oracle (Line~\ref{line:TRcall} in Algorithm~\ref{alg:conversion}) and one call to the $\mathsf{SEP}$ oracle (Line~\ref{line:SEPcall} in Subroutine~\ref{alg:hessian_approx}). 
Moreover, by Theorem~\ref{thm:convergence_rate}, we can find an $\epsilon$-FOSP after at most $M = \mathcal{O}(d^{1/4}/\varepsilon^{13/8})$ iterations. 

First, for a given failure probability $p \in (0,1)$, we can pick $q = \frac{p}{2M}$ so that, by the union bound, all of our calls to the $\mathsf{TRSolver}$ oracle and the $\mathsf{SEP}$ oracle (Line~\ref{line:SEPcall}) are successful with probability at least $1-p$. 
Now we consider the computational cost of $\mathsf{TRSolver}$. Recall that the input matrix is given by $\mA_n = \frac{1}{2}\mB_n + \frac{1}{\eta}\mI$ and we set $\delta = \frac{D}{\eta T}$ in Theorem~\ref{thm:convergence_rate}. Moreover, since $\|\mB_n\| \leq 2L_1$ (see Lemma~\ref{lem:surrogate_regret}), we have $\lambda_{\max}(\mA_n) - \lambda_{\min}(\mA_n) \leq \frac{1}{2}(\lambda_{\max}(\mB_n) - \lambda_{\min}(\mB_n)) \leq 2L_1$ and $\lambda_{\max}(\mA_n) \leq L_1+\frac{1}{\eta}$.  Hence, it follows from Corollary~\ref{coro:TRSolver} that the number of matrix-vector products for each call of $\mathsf{TRSolver}$ is bounded by:
\begin{equation*}
    \biggl\lceil\frac{1}{2}\sqrt{2\eta L_1 T }\log(\frac{704d L_1 \eta T M^2}{p^2 })+\frac{1}{2} \biggr\rceil + \max\biggl\{2\sqrt{{10 (\eta L_1 +1)T}},  4\sqrt{{10 \eta L_1 T}+10}\biggl\} = \tilde{\bigO}(\sqrt{\eta L_1 T} + \sqrt{T}).
\end{equation*}
Hence, after $M$ iterations, the total number of matrix-vector products is given by 
\begin{equation*}
    \tilde{\bigO}(\sqrt{\eta L_1 T} M + \sqrt{T} M) = \tilde{\bigO} \left( \frac{d^{1/8}L_1^{5/8} L_2^{3/16} (f(\vx_0)-f^*)}{\epsilon^{29/16}} + \frac{d^{3/8} L_1^{3/8}L_2^{5/16}(f(\vx_0)-f^*)}{\epsilon^{27/16}}\right)
\end{equation*}
where we used the expression of $\eta$ and $T$ in \eqref{eq:parameters_cr_exp} and $M = \mathcal{O}(d^{1/4}/\varepsilon^{13/8})$.
Furthermore, regarding the computational cost of the $\mathsf{SEP}$ oracle, it follows from Proposition~\ref{prop:SEP} that the total number of matrix-vector products is given by 
\begin{equation*}
    \Bigl\lceil \frac{1}{2}\log\frac{44dM^2}{p^2}+\frac{1}{2}\Bigr\rceil M = \tilde{\bigO} \left( \frac{d^{1/4} L_1^{1/4}L_2^{3/8} (f(\vx_0)-f^*)}{\epsilon^{13/8}}\right). 
\end{equation*}
Combining the two results, we complete the proof.

\printbibliography
\end{document}